\theoremstyle{plain}
\newtheorem{theorem}{Theorem}[section]
\newtheorem{lemma}[theorem]{Lemma}
\theoremstyle{definition}
\newtheorem{definition}[theorem]{Definition}
\newtheorem{proposition}[theorem]{Proposition}
\newtheorem{corollary}[theorem]{Corollary}
\newtheorem{remark}[theorem]{Remark}
\DeclareMathOperator*{\OmSum}{\mathlarger{\mathlarger{\Omega}}}
\DeclareMathOperator*{\MhSum}{\mathlarger{\mathlarger{\mho}}}
\newcommand{\tet}{\text{tet}}
\newcommand{\up}{\uparrow}
\begin{document}

\begin{titlepage}
    \begin{center}
        \vspace*{1cm}
            
        \Huge
        \textbf{Asymptotic Solutions of the Tetration Equation}
            
        \vspace{0.5cm}
        \LARGE
        \textit{An analysis in the spirit of James Stirling}
            
        \vspace{1.5cm}
            
        \textbf{James David Nixon}
        
        \Large
        February 2022
            
        \vfill
            
        \vspace{0.8cm}
            
        \includegraphics[scale = 0.6]{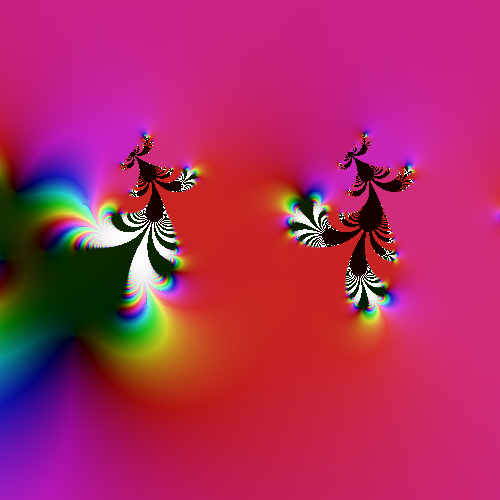}
            
    \end{center}
\end{titlepage}

\begin{abstract}
In this report we construct a family of holomorphic functions $\beta_{\lambda,\mu} (s)$ which behave asymptotically like iterated exponentials as $|s| \to \infty$ in the right half plane. Each $\beta_{\lambda,\mu}$ satisfies a convenient functional relationship with nested exponentials; and has a series expansion that converges in a half-plane. They provide a nearness to the dynamics of the map $e^{\mu z} : \mathbb{C}\to\mathbb{C}$ and behave asymptotically as a fractional iteration would behave. 

These objects are used to describe the various orbits of the exponential function. We describe where Abel equations are feasibly constructed from $\beta$. Where there exists wildly holomorphic functions with period $2 \pi i / \lambda$ that are holomorphic Abel functions of the form $t(s+1) = e^{\mu t(s)}$.

These functions help us see the behaviour of the iterated exponential base $b = e^{\mu}$--the inverse Abel function of $e^{\mu z}$. And are intended as a blue print of how iterated exponentials will behave as we observe them asymptotically. Where at worst, the approximation reduces to an asymptotic series.

For the Shell-Thron region we provide alternative tetrations to the regular iteration method. Init, we allow for a large domain of possible periodic solutions to the tetration equation. This shows many types of holomorphic iterations of the exponential functions that were previously non-extant; though believed to exist.

We close by comparing these iterations to the traditional path of using regular iterations. By such, we explore the rich field of $\theta$ mappings. Where we describe with brief notes how the multiple inverse Abel functions compare to Kneser; and Kneser's counterparts with complex $b$.\\

\emph{Keywords:} Complex Analysis; Infinite Compositions; Complex Dynamics; Iteration Theory.\\

\emph{2010 Mathematics Subject Classification:} 30D05; 30B50; 37F10; 39B12; 39B32\\
\end{abstract}

\tableofcontents

\section{Introduction}\label{sec1}
\setcounter{equation}{0}

The goal of this paper is to sequentially approximate tetration--or, more accurately the inverse Abel equation of $z \mapsto e^{\mu z}$ for $\mu \in \mathbb{C}$. The limit of the sequence may or may not be analytic; but if it isn't analytic, it serves as a fantastic asymptotic approximation. We won't concern ourselves excessively with the limit; except describe various properties the limit will have necessarily. But, before we can progress--it's helpful to remind ourselves what tetration is; and what it's good for; why we care.\\

To begin, let $b = e^{\mu}$ and look at what is commonly called the power tower:

\begin{align*}
    & 1\\
    & b\\
    & b^b\\
    & b^{\displaystyle b^b}\\
    & b^{\displaystyle b^{\displaystyle b^b}}\\
    &\vdots\\
    & b \up \up n\\
\end{align*}

Where the functional relationship should be apparent. The central idea and problem of tetration is to analytically continue this function in the variable $n$. So that we have some analytic function $F(s)$ such that $F\Big{|}_\mathbb{N} = b \up \up n$ and $b^{F(s)} = F(s+1)$. There are a couple of problems with this formulation that we should address before continuing.

First of all, we are calling on the function $b^z$; and as we should know, these have countably many solutions for $b \in \mathbb{C}$. Thus, to make this clearer, we are going to make the change of variables $b = e^{\mu}$; and the function we care about is $e^{\mu F(s)} = F(s+1)$. Now if we add $\mu + 2\pi i$, we still have the same natural values of the exponential, but a different branch of $b^z$. The behaviour in the complex plane will be vastly different. This isn't too important, but it's a distinction to recall when iterating exponentials.

Second of all, we should describe the domains of these solutions, and what a potential solution looks like. A tetration function $F$ must be holomorphic almost everywhere on a translation invariant domain $\mathcal{P}$ for it to be of interest of us. This means that for all $k \in \mathbb{Z}$, if $s \in \mathcal{P}$, then so is $s+k \in \mathcal{P}$.

Where additionally, if $\mathcal{E} \subset \mathcal{P}$ and $F$ is holomorphic on $\mathcal{P}/ \mathcal{E}$, then we only care about solutions in which:

$$
\int_{\mathcal{E}}\,dA = 0\\
$$

Where $dA$ is the standard Lebesgue area measure in $\mathbb{R}^2$. This can equivalently be said that there exists no open set within $\mathcal{E}$, and that it is nowhere dense in $\mathcal{P}$. This condition is to avoid purely real solutions; who may be analytic only on the real line; or strange domains. We prefer tetrations which are holomorphic on large domains, so to speak. And we require these domains to be translation invariant up to a measure zero set.

Last but not least, when $\mu \in \mathbb{R}$ and when $s \in \mathbb{R}$ with $\Re(s) > -2$, we would like for $F(s) \in \mathbb{R}$. So, the tetrations we care about must be real-valued. This is the significantly most difficult requirement of the construction. It's relatively simple to construct a tetration function otherwise.

There is an additional distinction to be made. We do not necessitate that a tetration function satisfies $F(0) = 1$. Typically this is a strict requirement on tetration.  As we are primarily interested in solving the equation $e^{\mu F(s)} = F(s+1)$ for arbitrary $\mu,s\in\mathbb{C}$; renormalizing each $F$ so $F(0) = 1$ is a nuissance we don't need. If a tetration function does satisfy $F(0) = 1$; we will call it a normalized tetration.\\

For example, take the fixed point $L \approx 0.31813 + 1.33723i$ of $e^z$, construct its inverse Schr\"{o}der function $\Psi$ (which is entire), then:

\begin{equation}\label{eq:1}
F(s) = \Psi(e^{L(s-s_0)})
\end{equation}

Where $\Psi$ is the unique entire function which satisfies:

$$
\Psi(0) = L\,\,\,\,\,\Psi'(0) = 1\,\,\,\,\,e^{\Psi(\xi)} = \Psi(e^L\xi)\\
$$

This will be a normalized tetration function for an appropriate $s_0$. This is an elementary exercise in superfunction theory/complex dynamics. Although this is a tetration function; it isn't real valued; and suffers from a plethora of non-uniqueness properties. There are countably infinite $s_0$ which provide a normalized tetration; and there's no obvious choice which is more natural.

For details on the construction of this function, we refer to \cite{milnor_2000,devaney_2021}. The construction of the Schr\"{o}der function is done in different manners in each; there are many such methods of construction.\\

From here we can note some fantastic results on tetration, which date very far back. The first being Kneser's observation that a Riemann mapping can make the above tetration in \eqref{eq:1} viable. Kneser's paper \cite{kneser_1950} is in German and there exists no English translation; but we cite his work for historical purposes. Kneser's construction is a very delicate procedure, but boils into solving for a $1$-periodic function $\theta$, such that,

$$
\theta(s) = \sum_{k=0}^\infty c_k e^{2 \pi i k s}\\
$$

And,

\begin{equation}\label{eq:Kne}
\tet_K(s) = \Psi(e^{L(s + \theta(s)-s_0)})\,\,\text{for}\,\,\Im(s) > 0\\
\end{equation}

This is apocryphally known as Kneser's tetration (Kneser would formulate this definition considerably different, but end up with the same result). Which is the first and foremost tetration that exists. It satisfies the tetration functional equation $F(s+1) = e^{F(s)}$, and additionally the conjugate property:

$$
\overline{\tet_K(s)} = \tet_K(\overline{s})\\
$$

Which is constructed by doing a similar procedure with $\overline{L}$ and its Schr\"{o}der function. This tetration will be a bijection from $(-2,\infty) \to \mathbb{R}$. It's set of discontinuities will be $\mathcal{E}_K = (-\infty,-2]$, which certainly satisfies our domain conditions--it is measure zero under the Lebesgue area measure in the translation invariant domain $\mathbb{C}$.

From here, thanks to the work of two people, we have a generalization of these results. Paulsen and Cowgill showed through meticulous reshaping of Kneser's construction (and considerable simplification), that this result can be extended to $b > e^{1/e}$ \cite{paulsen_cowgill_2017}; and also for a large data set of complex values, this time due solely to Paulsen \cite{paulsen_2018}. Much of their work comes off as a slicker, more sleet, version of Kneser's construction. I highly suggest the pair of papers. This is a result that's been in the air for a while, as for $b > e^{1/e}$ Kneser's construction works just as well.

From here, it is worthy to mention the work of Dmitrii Kouznetsov. He is the first person to produce an actual textbook on the theories of superfunctions \cite{kouznetsov_2020}. And, fitting neatly in this, is tetration. Where, it represents a very hard problem within a hard problem.

Kouznetsov chose a very different path in his construction of tetration. We will only sketch it briefly, and we'll focus on the case $\mu =1$ ($b = e$). Firstly, he develops a series:

$$
P_m(s) = L + \sum_{j=1}^m a_j e^{jLs}\\
$$

And takes the limit,

\begin{equation}\label{eq:Kou}
\tet_{K}(s) = \lim_{n\to\infty} \exp^{\circ n} P_m(s-n)\,\,\text{for}\,\,\Im(s) > 0\\
\end{equation}

And, per Paulsen and Cowgill, and a uniqueness condition they derive; this and Kneser's tetration are one and the same. But the miracle is that this construction converges so well. 

All three of these authors choose to use the race-track method of calculating tetration. This is a contour integration trick, which allows for fast and accurate calculations, that is particular to tetration. I highly suggest reading Kouznetsov's book, and his various works; where he is the inventor of the race-track method.\\

The author is not particular to this computation method. For the most versatile tetration calculator; designed after Kneser's method; the author turns to Sheldon Levenstein. Although not a published result in a scientific journal, Levenstein's Pari-GP calculator for tetration is full of a plethora of novel results; and a far better design system; allowing for the various Kneser tetrations to be graphed and calculated to arbitrary precision.

Levenstein's work is partly in collaboration with the many users of the tetration forum \cite{tetration_forum}; but the heavy lifting is largely handled by him. If you are curious as to what tetration is, and what it looks like, the author definitely suggests Levenstein's fatou.gp program for Pari-GP \cite{levenstein_2011}.\\

At this point, it is worth noting that tetration isn't always hard. Sometimes it's, frankly, rather simple. If I take $0 < \mu < 1/e$ and $b \in (1, e^{1/e})$--then the exponential $e^{\mu z}$ has a real attracting fixed point $1 < \omega < e$ with multiplier $0<\mu \omega<1$. Implying it has a Schr\"{o}der function $\varphi$ in a neighborhood of the fixed point. And we get a function:

$$
\tet_\mu(s) = \varphi(e^{\log(\omega \mu)(s-s_0)})\\
$$

Where the domain of discontinuity is,

$$
\mathcal{E}_\mu = \{s \in \mathbb{C}\,|\, \Re(s) < -2,\,\Im(s) = 2 \pi ik/\log(\omega \mu),\,k \in \mathbb{Z}\}
$$

We cite Henryk Trappman and Kouznetsov's paper \cite{kouznetsov_trappmann_2010} on the various types of super functions that $\sqrt{2}$ induces; where among them is the above tetration function. Though, this is a well known result, and is not difficult to prove (relative to the Kneser construction). In this paper they focus on Kouznetsov's idea of a regular iteration. In this report then, we will construct many \textit{irregular} iterations.\\ 

With all these results on tetration, it's important to ask what's new to be shown--what's really left. There are many things new to be shown in tetration; it is still a blossoming field, even if the golden apples are already plucked. To draw one from the hat, we'd point out that tetration in the Shell-Thron region is little understood. The Shell-Thron region is defined as:

$$
\mathfrak{S} = \{b \in \mathbb{C}\,|\,\lim_{n\to\infty}\exp_b^{\circ n}(1)\,\,\text{converges}\}\\
$$

This produces the region in Figure \ref{fig:Sh-Th}.

\begin{figure}
    \centering
    \includegraphics[scale=0.5]{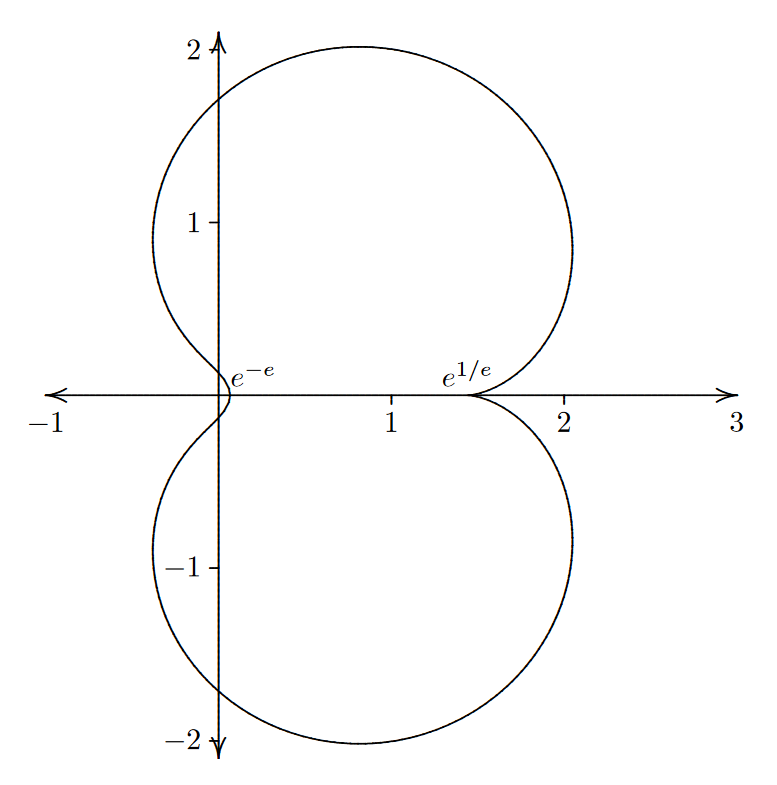}
    \caption{A picture of the Shell-Thron region, courtesy of William Paulsen \cite{paulsen_2018}.}
    \label{fig:Sh-Th}
\end{figure}

Constructing tetration in this area should be easier; but it's very anomalous, especially if you try to use the techniques Kneser (and his genealogy) has provided. In this way, it's less than desirable. A similar problem arises for extreme values: how can we take the tetration of $\mu = -1000$; or when $b$ is really close to zero? Or, how do we take tetration when $\mu = 1000 + \pi i$, when $b$ is way out in the left half plane? There's little literature on such cases. And as fantastic as Levenstein's program is; you can see the gears grinding together for these extremes.

Now, we have not come here to solve these cases. But instead, create asymptotic expansions, which gives the shape, structure, and look and feel of tetration; while being slightly more manageable. But nonetheless; we will still be able to produce tetration functions; but they may be less than what we desire. Though they certainly submit themselves to our tetration criteria. But ultimately we will be constructing asymptotic results which are occasionally holomorphic.\\

This paper is done in the spirit of James Stirling's asymptotic expansion of the Gamma function \cite{remmert_1998}. Where it stands as one of the first recorded asymptotic series. Where, upon first inspection, Stirling was confident the expression converged and was a new expression for the Gamma function. But, instead, it was a very accurate asymptotic series.

We are attempting to do something similar. Where as: Stirling's asymptotic expansion is derived for the recursive equation $\Gamma(s+1) = s \Gamma(s)$; ours is derived from the recursive equation $F(s+1) = e^{\mu F(s)}$. We won't be using any of his machinery; but the author encountered the same conundrum as Stirling. The author was equally convinced of holomorphy--where it is not always the case; and in fact we have only developed successive approximations. The author owes Sheldon Levenstein many thanks for providing counter examples; and helping the author pull out the actual truth.

And here, is the perfect moment to branch off into what we'll be studying in this paper. Our little corner of the mathematical world known as infinite compositions.

\chapter{A Crash Course In Infinite Compositions}

\section{A Crash Course In Infinite Compositions}\label{sec2}
\setcounter{equation}{0}

What is an infinite composition? The author finds himself constantly re-explaining these concepts to every mathematician he meets. Either the definition doesn't stick, or the importance is shrugged off. Sometimes it's understood, but it's goals are misinterpreted. And sometimes, people just flat out do not comprehend the normality theorems which allow for infinite compositions. So, to begin, we shall go very slow. Much of the author's work is unpublished; and exists solely on arXiv. We shall not reference anything from these works, though we will summarize a couple of core concepts. For more details on this work, see the author's arXiv page \cite{nixon_2022}.

Consider a sequence of functions $q_j(z) : \mathcal{G} \to \mathcal{G}$ for $\mathcal{G} \subseteq \mathbb{C}$ a domain in $\mathbb{C}$. The pivotal question of infinite compositions, is where do the following nested compositions converge?

\begin{align*}
    &\lim_{n\to\infty}q_1(q_2(...q_n(z)))\\
    &\lim_{n\to\infty} q_n(q_{n-1}(...q_1(z)))\\
\end{align*}

The first case is known as Inner Compositions, and the second as Outer Compositions--as to whether we add terms on the inside or the outside. Now, luckily, we can throw away the second case--we don't care about outer compositions. The outer case has its interesting properties, but they're irrelevant to tetration. Unfortunately, the inner case, the first case, despite being the more interesting case, is the more difficult case.

Now, notationally it can be very frustrating to consistently write this limit notation. And for that, the author chose to write this in an Euler style summation notation. And for that, we restrict the symbols $\Omega$ and $\mho$ to represent inner or outer. This is to say that:

\begin{align*}
    \OmSum_{j=n}^m q_j(z) \bullet z &= q_n(q_{n+1}(...q_m(z)))\\
    \MhSum_{j=n}^m q_j(z) \bullet z &= q_m(q_{m-1}(...q_n(z)))\\
\end{align*}

And now, you may be wondering, why the $\bullet z$? Well, an important feature of infinite compositions is that we nest across a certain variable. The $\bullet z$ is essentially telling the reader to compose the functions $q_j$ across the variable $z$. What if $q_j$ depended on two variables, say $\lambda , z$? Without the $\bullet z$, look what happens when I write:

$$
\OmSum_{j=n}^m q_j(\lambda , z)\\
$$

Do I nest the compositions through $\lambda$, or do I nest the compositions through $z$? The reader has the potential to misread and think this function is actually:

$$
\OmSum_{j=n}^m q_j(\lambda , z) = q_n(q_{n+1}(...q_m(\lambda,z),...,z),z)\\
$$

Whereas, if I write with the bullet, we know that:

\begin{align*}
\OmSum_{j=n}^m q_j(\lambda, z) \bullet z &= q_n(\lambda,q_{n+1}(\lambda,...q_m(\lambda,z)))\\
\OmSum_{j=n}^m q_j(\lambda, z) \bullet \lambda &= q_n(q_{n+1}(...q_m(\lambda,z),...,z),z)\\
\end{align*}

This works little differently than Leibniz's notation for integration. When we call upon $\OmSum$, we have to make sure we declare the variable we are composing across. This is to say, if I write $\int e^{sy}$ are we integrating across $s$, or across $y$? The reader has to guess, whether from context or sheer ignorance.

Now, we will reserve the variable $z$ to the $\bullet z$; and we'll stay as consistent as possible. This is why we've chosen to refer to the exponential as $e^{\mu z}$ with $z$ rather than $s$; because we will be taking infinite compositions across this variable; and a modified exponential will take the role of $q$. By which, $\bullet z$ is the domain of the iterate, and $s$ is the domain of the iteration.

It is hereupon, where the author must recite much of his work in previous papers. These are just some of the normality theorems of infinite compositions the author has gathered. We won't prove these heuristics excepting the one we actually use for tetration. But much of the heuristics fit together; so it's important to mention them.

To begin, lets define the function:

$$
Q(z) = \OmSum_{j=1}^\infty q_j(z) \bullet z\\
$$

And ask when it is holomorphic in $z$. The first result; if for all compact disks $\mathcal{K}\subset \mathcal{G}$, we get:

$$
\sum_{j=1}^\infty \left|\left|q_j(z) - z\right|\right|_{\mathcal{K}} = \sum_{j=1}^\infty \sup_{z \in \mathcal{K}} \left| q_j(z) - z \right| < \infty\\ 
$$

Then $Q(z)$ is holomorphic for $z \in \mathcal{G}$. This can be thought of, a little fluidly, as: if $q_j(z) \to z$ as $j\to\infty$, and does so in a normally summable manner, then their infinite composition is a holomorphic function in $z$. Which is, if $q_j(z) - z$ is compactly normally summable, then $Q$ is holomorphic.

We don't need this result. But it begins to paint a picture of checking a sum to ensure an infinite composition is holomorphic. The next case is much more important to us. Let's assume that $q_j(z) \to A$ where $A \in \mathcal{G}$ is constant. Where and when does $Q(z)$ converge?

Here, we have what I like to call the ``degenerate case." This is when the final function $Q(z)$ will be holomorphic, but it will be constant. There's a similar theorem which accounts for these cases, only slightly changed. Taking $\mathcal{K}$ to be a compact disk in $\mathcal{G}$ again, if for all $\mathcal{K}$:

$$
\sum_{j=1}^\infty \left| \left| q_j(z) - A \right| \right|_{\mathcal{K}} < \infty\\
$$

Then $Q(z) = Q$ is a constant function for $z \in \mathcal{G}$. And here is where we have to enter a more difficult discussion that we ignited earlier. What happens when $q$ depends on another variable other than $z$? Now, this requires a turn of the key, because we still need this to be comparable to a sum to show convergence, but how to do this efficiently?

We're going to consider $q_j(s,z)$ for $s \in \mathcal{S} \subset \mathbb{C}$. And, for convenience we will assume the set $\mathcal{S}$ is compact. And additionally, we will say a function $f(s)$ is holomorphic on $\mathcal{S}$ if it is holomorphic on its interior. So holomorphy on a compact set means it's holomorphic on its interior.

The author spent a very long time detailing that:

$$
Q(s,z) = \OmSum_{j=1}^\infty q_j(s,z)\,\bullet z\\
$$

Has different modes of convergence, depending on how $q_j$ behaves depending on $s$ or $z$. If this portion of the report interests the reader we suggest our arXiv repository \cite{nixon_2022}. We only need to detail one mode of convergence in this paper. So let us call the summability criterion that we care about:

$$
\sum_{j=1}^\infty \left| \left|q_j(s,z) - A \right| \right|_{\mathcal{S},\mathcal{K}} =  \sum_{j=1}^\infty \sup_{s \in \mathcal{S},z \in \mathcal{K}}\left|q_j(s,z) - A \right|< \infty\\
$$

Which will gives us a holomorphic function $Q(s,z)$ which is holomorphic in $s$ and constant in $z$. The summation of the next section is to prove this theorem rigorously.

\section{Our Infinite Composition Theorem}\label{sec3}
\setcounter{equation}{0}

Before beginning this theorem, we would like the reader to understand that this is soup from concentrate. For a more organic, well thought out, long explanation, we refer to our previous work; where we spent more time detailing how and why this happens. For the moment, all that's needed is the following theorem. As per that, the proof can seem blunt and uninspiring.

I chose not to break it into little lemmas and the such, because it's flow is as it should be. And the theorem and its proof are clear as it stands. We are trying to relate a summation condition to an infinite composition. And if you can accept this theorem, most of the rest of this paper will be expected consequences of this $\OmSum \to \sum$ comparison.

\begin{theorem}[Our Infinite Composition Theorem]\label{thmINF}
Let $\{q_j(s,z)\}_{j=1}^\infty$ be a sequence of holomorphic functions such that $q_j(s,z) : \mathcal{S} \times \mathcal{G} \to \mathcal{G}$ where $\mathcal{S} \subset \mathbb{C}$ is compact, and $\mathcal{G}$ is a domain in $\mathbb{C}$. Suppose there exists some $A \in \mathcal{G}$, such for all compact sets $\mathcal{K}\subset\mathcal{G}$, the following sum converges,

$$
\sum_{j=1}^\infty ||q_j(s,z) - A||_{z \in \mathcal{K},s \in \mathcal{S}} = \sum_{j=1}^\infty \sup_{z \in \mathcal{K},s \in \mathcal{S}}|q_j(s,z) - A| < \infty
$$

Then the expression,

$$
Q(s,z) = \lim_{n\to\infty}\OmSum_{j=1}^n q_j(s,z)\bullet z = \lim_{n\to\infty} q_1(s,q_2(s,...q_n(s,z)))\\
$$

Converges uniformly for $s \in \mathcal{S}$ and $z \in \mathcal{K}$ as $n\to\infty$ to $Q$, a holomorphic function in $s\in\mathcal{S}$, constant in $z$.
\end{theorem}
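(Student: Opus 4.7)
The plan is to prove convergence by first controlling the \emph{tail} compositions $Q_n^N(s,z) := q_N(s, q_{N+1}(s, \dots q_n(s,z)))$ for some sufficiently large fixed $N$, and then transporting uniform convergence back to $Q_n$ through the finite head $q_1\circ\cdots\circ q_{N-1}$, which is automatically a nice holomorphic function. To set things up, fix any $r>0$ small enough that $\overline{D(A,r)}\subset\mathcal{G}$, and given an arbitrary compact $\mathcal{K}\subset\mathcal{G}$, enlarge it to $\mathcal{K}':=\mathcal{K}\cup\overline{D(A,r)}$, still compact in $\mathcal{G}$. Writing $\epsilon_j := \|q_j - A\|_{\mathcal{S}\times\mathcal{K}'}$, the hypothesis ensures $\sum_j \epsilon_j < \infty$, so I may choose $N$ so large that $\epsilon_j \leq r/4$ for all $j\geq N$. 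A short induction on $n\geq N$ then shows that each $q_j(s,\cdot)$ with $j\geq N$ maps $\mathcal{K}'$ into $\overline{D(A,r/4)}\subset \mathcal{K}'$, so $Q_n^N(s,z)\in \overline{D(A,r/4)}$ for every $(s,z)\in\mathcal{S}\times\mathcal{K}$ and every $n \geq N$.

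The quantitative engine is Cauchy's estimate. Because $|q_j(s,\cdot)-A|\leq \epsilon_j$ on $\overline{D(A,r)}$, one gets $|\partial_z q_j(s,w)|\leq 2\epsilon_j/r$ for $w\in\overline{D(A,r/2)}$. Since all intermediate images in a tail composition stay inside $\overline{D(A,r/4)}\subset\overline{D(A,r/2)}$, the chain rule yields
$$|\partial_z Q_n^N(s,w)| \;\leq\; \prod_{j=N}^{n}\frac{2\epsilon_j}{r}, \qquad w\in\overline{D(A,r/2)}.$$
Enlarging $N$ a bit more so that $2\epsilon_j/r\leq 1/2$ whenever $j\geq N$, this is dominated by $(1/2)^{n-N+1}$ and therefore decays geometrically in $n$, uniformly in $s$ and $w$.

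With this derivative bound, I would establish that $\{Q_n^N\}_{n\geq N}$ is uniformly Cauchy on $\mathcal{S}\times\mathcal{K}$. For $m>n\geq N+1$, split off a common prefix $R_n(s,\cdot):= q_N(s,\dots q_{n-1}(s,\cdot))$ and write
$$Q_m^N(s,z) = R_n(s,U_m(s,z)), \qquad Q_n^N(s,z) = R_n(s,V_n(s,z)),$$
where $U_m := q_n(s,q_{n+1}(s,\dots q_m(s,z)))$ and $V_n := q_n(s,z)$; both lie in the convex set $\overline{D(A,r/4)}$. The complex mean value estimate along the segment from $V_n$ to $U_m$, combined with the derivative bound applied to $R_n$, gives
$$|Q_m^N(s,z) - Q_n^N(s,z)| \;\leq\; |U_m - V_n|\cdot \sup_{\overline{D(A,r/2)}}|\partial_z R_n| \;\leq\; \tfrac{r}{2}\cdot(1/2)^{n-N},$$
uniformly in $(s,z)\in\mathcal{S}\times\mathcal{K}$ and in $m$. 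Hence $Q_n^N$ converges uniformly to some $Q^N(s,z)$, holomorphic by Weierstrass; passing the derivative bound to the limit yields $\partial_z Q^N\equiv 0$, so $Q^N$ is constant in $z$. Writing $Q_n(s,z) = q_1(s,q_2(s,\dots q_{N-1}(s,Q_n^N(s,z))))$ and noting that the finite head is a fixed holomorphic function of two variables which is uniformly continuous on the relevant compact set, $Q_n$ converges uniformly to a holomorphic $Q(s,z)$, still constant in $z$, which is exactly the claim.

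The main obstacle I anticipate is the derivative step: converting the raw summability of $\epsilon_j$ into the \emph{geometric} contraction of $\partial_z Q_n^N$ along the composition chain. Once Cauchy's estimate and the chain rule are combined on a disk safely inside $\mathcal{G}$, the remaining work is standard manipulation with uniform limits of holomorphic families. The initial bookkeeping of choosing $r$ and $\mathcal{K}'$ so that the nested inclusions $q_j(\mathcal{S}\times\mathcal{K}')\subset\overline{D(A,r/4)}\subset\overline{D(A,r)}\subset\mathcal{G}$ all hold is routine but essential, since it is what makes both the induction and the Cauchy estimate simultaneously available.
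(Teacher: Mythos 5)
Your proof is correct, and it takes a genuinely different route from the paper's. The paper bounds all the $z$-derivatives of the full partial compositions $\phi_m = q_1\circ\cdots\circ q_m$ via Cauchy estimates on a fixed disk (yielding $\|\partial_z^k\phi_m\| \le M\,k!\,L^k$, a bound that is uniform in $m$ but does not decay), then inserts $q_{m+1}$ through Taylor's theorem to obtain $\|\phi_{m+1}(s,A)-\phi_m(s,A)\| \le C\,\epsilon_{m+1}$, and finally telescopes using the hypothesis $\sum_j\epsilon_j < \infty$. You instead apply Cauchy's derivative estimate to each \emph{individual} $q_j$ on a small disk about $A$, getting $\|\partial_z q_j\| \le 2\epsilon_j/r$, and then multiply these along the chain rule for a tail composition: this produces \emph{geometric} contraction $\|\partial_z R_n\| \le (1/2)^{n-N}$, from which uniform Cauchy-ness falls out via the mean value inequality without ever summing the $\epsilon_j$. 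As a bonus, constancy in $z$ comes for free from the vanishing derivative, whereas the paper proves it in a separate step at the end by comparing the tail to the composition evaluated at $z=A$. A pleasant side effect of your argument, worth noting, is that it only uses $\epsilon_j \to 0$ rather than full summability, so it actually establishes the theorem under a weaker hypothesis; the paper's telescoping step, as written, genuinely needs the sum to converge. One small point to tidy: your derivative bound $\partial_z Q^N \equiv 0$ is obtained on $\overline{D(A,r/2)}$, while constancy in $z$ is claimed on all of $\mathcal{K}$; this is immediate from the same mean value step (for $z,z'\in\mathcal{K}$, both $q_n(s,z)$ and $q_n(s,z')$ lie in $\overline{D(A,r/4)}$, so $|Q_n^N(s,z)-Q_n^N(s,z')|\le (r/2)(1/2)^{n-N}$), but it deserves a sentence rather than an appeal to $\partial_z Q^N = 0$ alone.
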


\begin{proof}

The first thing we show is for all $\epsilon > 0$, there exists some $N$, such when $m \ge n  > N$,

$$
|\OmSum_{j=n}^{m} q_j(s,z)\bullet z - A| < \epsilon
$$

For $z$ in $\mathcal{K}\subset \mathcal{G}$ (where $A$ is in the open component of $\mathcal{K}$), and $s\in\mathcal{S}$. This then implies as we let $m\to\infty$, the tail of the infinite composition stays bounded. Forthwith, the infinite composition becomes a normal family, and proving convergence becomes simpler. We provide a quick proof of this inequality.\\

Set $||q_j (s, z)-A||_{\mathcal{S}, \mathcal{K}} = \rho_j$. Pick $\epsilon > 0$, and choose $N$ large enough so when $n > N$,

$$
\rho_n < \epsilon
$$

Such that the domain $\{|z-A| < \epsilon\} \subset \mathcal{K}$. Denote: $\phi_{nm}(s, z) =\OmSum_{j=n}^m q_j (s, z) \bullet z = q_n(s, q_{n+1}(s, ...q_m(s, z)))$. We go by induction on the difference $m-n = k$. When $k=0$ then,

$$
||\phi_{nn}(s,z) - A||_{\mathcal{S},\mathcal{K}} = ||q_n(s,z)-A||_{\mathcal{S},\mathcal{K}}= \rho_n < \epsilon
$$

Assume the result holds for $m-n < k$, we show it holds for $m-n = k$. Observe,

\begin{eqnarray*}
||\phi_{nm}(s,z)-A||_{\mathcal{S},\mathcal{K}} &=& ||q_n(s,\phi_{(n+1)m}(s,z)) - A||_{\mathcal{S},\mathcal{K}}\\
&\le& ||q_n(s,z)-A||_{\mathcal{S},\mathcal{K}}\\
&=& \rho_n < \epsilon\\
\end{eqnarray*}

Which follows by the induction hypothesis because $\phi_{(n+1)m}(s,z) \subset \mathcal{K}$--it's in a $\epsilon$-neighborhood of $A$ which is in $\mathcal{K}$. That is $m-n-1 < k$; that's part of the inductive proof. Therefore, for all $m \ge n > N$:

$$
||\phi_{nm}(s,z) - A||_{\mathcal{S},\mathcal{K}} < \epsilon
$$\\

The next step is to observe that $\OmSum_{j=1}^m q_j(s,z)\bullet z$ is a normal family as $m\to\infty$, for $z \in \mathcal{K}$ and $s \in \mathcal{S}$. This follows because the tail of this composition is bounded. We can say:

$$
\OmSum_{j=1}^m q_j(s,z) \, \bullet z = \phi_{1m}(s,z) = \phi_m(s,z) = \phi_N(s,A+\epsilon)\\
$$

Specifically:

$$
\left|\left|\OmSum_{j=1}^m q_j(s,z) \, \bullet z\right|\right|_{\mathcal{S},\mathcal{K}}  = \left|\left|\OmSum_{j=1}^N q_j(s,z) \, \bullet z\right|\right|_{\mathcal{S},|z-A|<\epsilon} \le M\\
$$

Where $\epsilon \to 0$ as $N\to\infty$. Since $\phi_m(s,z) = \OmSum_{j=1}^m q_j(s,z)\bullet z$ are a normal family for all compact sets $\mathcal{K}\subset \mathcal{G}$ (they're bounded with the supremum norm); there is some constant $M \in \mathbb{R}^+$ and $L \in \mathbb{R}^+$ such,

\[
\left|\left|\frac{d^k}{dz^k} \phi_m(s,z) \right|\right|_{\mathcal{S},\mathcal{K}} \le M \cdot k! \cdot L^k
\]\\

To see this, take $|z-A| < 2\delta$ and observe,

\[
\frac{d^k}{dz^k} \phi_m(s,z) = \frac{k!}{2\pi i}\int_{|\xi - A| = 2\delta} \frac{\phi_m(s,\xi)}{(\xi - z)^{k+1}}\,d\xi\\
\]

Taking the supremum norm across $|z-A| \le \delta$ and $s \in \mathcal{S}$,

\begin{eqnarray*}
||\frac{d^k}{dz^k} \phi_m(s,z)||_{\mathcal{S},|z-A| \le \delta} &\le& \frac{k!}{2\pi} \int_{|\xi-A| = 2\delta} \frac{||\phi_m(s,\xi)||_{\mathcal{S}}}{|\xi-z|_{|z-A| \le \delta}^{k+1}}\,d\xi\\
&\le& \frac{k!}{2\pi} \int_{|\xi-A| = 2\delta} \frac{M}{\delta^{k+1}}\,d\xi\\
&\le& \frac{2 M k!}{\delta^{k}}\\  
\end{eqnarray*}

Where we've used the bound $|\xi - z| \ge \delta$ when $|\xi - A| = 2\delta$ and $|z-A| \le \delta$. This bound can be derived regardless of $\mathcal{K}$ for varying $M$ and $L$.\\

Secondly, using Taylor's theorem,

\begin{align*}
\phi_{m+1}(s,z) &- \phi_m(s,z) = \phi_m(s,q_{m+1}(s,z)) - \phi_m(s,z)\\
&= \sum_{k=1}^\infty \frac{d^k}{dz^k} \phi_m(s,z) \frac{(q_{m+1}(s,z) - z)^k}{k!}\\
&= (q_{m+1}(s,z) - z) \sum_{k=1}^\infty \frac{d^k}{dz^k} \phi_m(s,z) \frac{(q_{m+1}(s,z) - z)^{k-1}}{k!}\\
\end{align*}

So that, setting $z=A$,

\begin{align*}
||\phi_{m+1}(s,A) &- \phi_m(s,A)||_{s \in\mathcal{S}} \le ||q_{m+1}(s,A) - A||_{s\in\mathcal{S}} \sum_{k=1}^\infty M L^k ||q_{m+1}(s,A) - A||^{k-1}\\
&\le ||q_{m+1}(s,A) - A||_{\mathcal{S}} \frac{ML}{1-p}\\
\end{align*}

For $L||q_{m+1}(s,A) - A||_{\mathcal{S}} \le p <1 $, which is true for large enough $m>N$. Setting $C = \frac{ML}{1-p}$. Applying from here,

\[
||\phi_{m+1}(s,A) - \phi_m(s,A)||_{s \in \mathcal{S}} \le C ||q_{m+1}(s,A) - A||_{s \in \mathcal{S}}\\
\]

This is a convergent series per our assumption. Choose $N$ large enough, so that when $m,n>N$,

\[
\sum_{j=n}^{m-1}||q_{j+1}(s,A) - A||_{s \in \mathcal{S}} < \frac{\epsilon}{C}\\
\]

Then,

\begin{eqnarray*}
||\phi_{m}(s,A) - \phi_n(s,A)||_{s \in \mathcal{S}} &\le& \sum_{j=n}^{m-1} ||\phi_{j+1}(s,A) - \phi_j(s,A)||_{s \in \mathcal{S}}\\
&\le& C\sum_{j=n}^{m-1}||q_{j+1}(s,A) - A||_{s \in \mathcal{S}}\\
&<& \epsilon
\end{eqnarray*}

So we can see $\phi_m(s)$ must be uniformly convergent for $s \in \mathcal{S}$, and therefore defines a holomorphic function $Q(s)$ as $m\to\infty$.

This tells us,

\[
Q(s) = \OmSum_{j=1}^\infty q_j(s,z)\bullet z \Big{|}_{z=A}\\
\]

Converges and is holomorphic. To show this function equals,

\[
\OmSum_{j=1}^\infty q_j(s,z)\bullet z
\]

For all $z \in \mathcal{G}$; simply notice that,

\[
\OmSum_{j=m}^\infty q_j(s,z)\bullet z \to A
\]

It does so uniformly. Since this is arbitrarily close to $A$ as we let $m$ grow (which was shown at the beginning of this proof). Then,

\begin{eqnarray*}
\OmSum_{j=1}^\infty q_j(s,z)\bullet z &=& \OmSum_{j=1}^{m-1} q_j(s,z)\bullet \OmSum_{j=m}^\infty q_j(s,z)\bullet z\\
&=& \lim_{m\to\infty} \OmSum_{j=1}^{m-1} q_j(s,z)\bullet \lim_{m\to\infty} \OmSum_{j=m}^\infty q_j(s,z)\bullet z\\
&=& \OmSum_{j=1}^\infty q_j(s,z)\bullet z\Big{|}_{z=A}\\
\end{eqnarray*}
\end{proof}

I have written this theorem as effectively as I can. It is not an overly difficult theorem. But it does make one scratch their head. And, to repeat myself: this is the only infinite composition theorem we need. Much of the heuristics are helpful; but in the dirt and grit, this is all we need.

To summarize in a common manner, we return to the discussion of sums. If we have a sequence of functions $q_j(s,z)$ such that $q_j(s,z) \to A$ as $j \to \infty$; but additionally, this is a summable limit; which means:

$$
\sum_{j=1}^\infty q_j(s,z) - A \,\,\text{converges}\\
$$

And additionally--this sum converges compactly normally--then the inner infinite composition is a holomorphic function in $s$. And this is truly all we need the reader to understand about infinite compositions. Though they are a deep subject, we need not go too deep to get a nugget of gold.

\section{The First Construction of $\beta$}\label{sec4}
\setcounter{equation}{0}

As with any good function in mathematics, there should be multiple avenues of its construction. And as with the function we are about to produce, there are two manners of construction that are only slightly different. At least, two obvious manners. We will focus specifically on one manner first, and the next in the next section. This method, although a tad taxing, is mathematically simpler. The next method is computationally quicker.

So, let us begin by fiddling with a sequence of functions $q_j(s,z)$. Let us state that:

$$
q_{j+1}(s,z) = q_j(s-1,z)\\
$$

Let's additionally assume that:

$$
\sum_{j=1}^\infty \left| \left | q_j(s,z) \right| \right|_{\mathcal{S},\mathcal{K}} < \infty\\
$$

For all $\mathcal{K} \subset \mathbb{C}$ a compact disk, and for all $\mathcal{S} \subset \mathbb{C}$ compact. Now, let's look at $Q$; it'll be a little generous with these additions. 

\begin{align*}
    Q(s+1) &= \OmSum_{j=1}^\infty q_j(s+1,z) \bullet z &\\
    &= \OmSum_{j=1}^\infty q_{j-1}(s,z)\bullet z\,\,&\text{--}q\text{'s function identity}\\
    &= \OmSum_{j=0}^\infty q_{j}(s,z)\bullet z\,\,&\text{--we re-indexed the composition, here}\\
    &= q_0(s,\OmSum_{j=1}^\infty q_j(s,z) \bullet z)\,\,&\text{--associative property}\\
    &= q_0(s,Q(s)) &\\
\end{align*}

So we can see that $Q$ has a rather nice functional equation. From here, we choose a function $q_0(s,z)$ which looks like the exponential function $e^{\mu z}$ as $s \to \infty$; and let $q_j(s,z) = q_0(s-j,z)$. We can choose a vast amount of functions for this; but the one we will choose, is inspired by the logistic function.

$$
q_0(s,z) = \frac{e^{\mu z}}{1+e^{-\lambda s}}\\
$$

Where we restrict $\Re \lambda >0$ so that the summation condition converges. Now you may be wondering why we introduce the variable $\lambda$, and the answer is simple. It produces a different mode of convergence. The variable $\lambda$ determines how fast we converge. It also induces a period in $s$ of $2 \pi i/\lambda$; we'd like to be able move the period around later in this paper. This function also has singularities at $(2k+1)\pi i/\lambda$ for $k \in \mathbb{Z}$; and we'd like to be able to move these around too. 

From here, we can now introduce the $\beta$ function:

\begin{equation}\label{eq:beta}
    \beta_{\lambda, \mu} (s) = \OmSum_{j=1}^\infty \frac{e^{\mu z}}{1+e^{\lambda(j-s)}}\bullet z\\
\end{equation}

Which satisfies the functional equation:

$$
\beta_{\lambda,\mu} (s+1) = \frac{e^{\mu \beta_{\lambda , \mu}(s)}}{1 + e^{-\lambda s}}\\
$$

We write this in a quick theorem:

\begin{theorem}[The $\beta$ Theorem]
The function:

$$
\beta_{\lambda, \mu} (s) = \OmSum_{j=1}^\infty \frac{e^{\mu z}}{1+e^{\lambda(j-s)}}\bullet z\\
$$

Is holomorphic for $ \mu \in \mathbb{C}$ and $(s,\lambda) \in \mathbb{L} = \{(s,\lambda) \in \mathbb{C}^2\,|\,\Re\lambda > 0,\, \lambda(j-s) \neq (2k+1)\pi i,\, j,k \in \mathbb{Z},\, j\ge 1\}$. And this function satisfies the identity:

$$
\beta_{\lambda,\mu} (s+1) = \frac{e^{\mu \beta_{\lambda , \mu}(s)}}{1 + e^{-\lambda s}}\\
$$
\end{theorem}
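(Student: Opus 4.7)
The plan is to apply Theorem \ref{thmINF} with $q_j(s,z) = \frac{e^{\mu z}}{1 + e^{\lambda(j-s)}}$, constant $A = 0$, and ambient domain $\mathcal{G} = \mathbb{C}$; the functional equation then follows from the re-indexing calculation already displayed just before the theorem statement.

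The verification of the hypothesis reduces to a single tail estimate. Fix a compact set $\mathcal{S}$ of parameters $(s,\lambda)$ inside $\mathbb{L}$ (and, for the joint statement, also of $\mu$ inside $\mathbb{C}$); by compactness $\Re\lambda \ge \eta > 0$ and $|s|, |\mu|$ are bounded there. For any compact disk $\mathcal{K}\subset\mathbb{C}$ one has $|e^{\mu z}| \le M$ on $\mathcal{S}\times\mathcal{K}$. For all $j$ beyond some $j_0$ depending on $\mathcal{S}$, the real part $\Re(\lambda(j-s)) \ge j\eta - C$ is large, so
$$\bigl|1 + e^{\lambda(j-s)}\bigr| \;\ge\; e^{\Re(\lambda(j-s))} - 1 \;\ge\; \tfrac{1}{2}\, e^{j\eta - C},$$
and hence $\|q_j(s,z)\|_{\mathcal{S},\mathcal{K}} \le M'\, e^{-j\eta}$. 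The series therefore converges geometrically in $j$, while the finitely many head terms are bounded because $\mathcal{S}$ has been chosen to avoid the singular locus $\lambda(j-s) \in (2\mathbb{Z}+1)\pi i$ that was explicitly excluded from $\mathbb{L}$. Theorem \ref{thmINF} then guarantees that $\beta_{\lambda,\mu}(s)$ is the uniform limit of the partial compositions on $\mathcal{S}$ and is holomorphic there; exhausting $\mathbb{L}\times\mathbb{C}$ by such compacta gives holomorphy on the full claimed domain.

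For the functional equation, a direct substitution shows $q_{j+1}(s+1, z) = q_j(s,z)$, so after re-indexing the composition,
$$\beta_{\lambda,\mu}(s+1) \;=\; \OmSum_{j=1}^\infty q_j(s+1,z) \bullet z \;=\; \OmSum_{j=0}^\infty q_j(s,z) \bullet z \;=\; q_0\bigl(s,\, \beta_{\lambda,\mu}(s)\bigr) \;=\; \frac{e^{\mu\, \beta_{\lambda,\mu}(s)}}{1+e^{-\lambda s}}.$$
The splitting of the infinite composition into its first factor and its tail in the third equality is the associative property invoked throughout the proof of Theorem \ref{thmINF}; both pieces converge because the summability estimate above survives the one-step shift in the indexing.

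The only genuine obstacle is making the lower bound on $|1 + e^{\lambda(j-s)}|$ uniform on a fixed compact subset of $\mathbb{L}$. The condition $\Re \lambda > 0$ forces exponential decay of the tail, and the excluded set $\lambda(j-s) \in (2\mathbb{Z}+1)\pi i$ is exactly what keeps the finitely many head terms finite; with these two geometric facts in hand, the theorem drops out of Theorem \ref{thmINF} together with the re-indexing identity for $q_j$.
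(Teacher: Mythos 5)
Your proof is correct and follows the same route as the paper: apply Theorem \ref{thmINF} with $A=0$ after verifying compact normal summability of $\sum_j q_j$ (which the paper merely asserts and you usefully spell out via the bound $|1+e^{\lambda(j-s)}| \ge \tfrac12 e^{j\eta - C}$), and obtain the functional equation from the shift identity $q_{j+1}(s+1,z)=q_j(s,z)$ by re-indexing and peeling off $q_0$.
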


\begin{proof}
The sum:

$$
\sum_{j=1}^\infty \frac{e^{\mu z}}{1+e^{\lambda(j-s)}}\\
$$

Converges compactly normally for $z,\mu \in \mathbb{C}$ and $(s,\lambda) \in \mathbb{L}$. Therefore, by Our Infinite Composition Theorem \ref{thmINF}, the infinite composition must converge uniformly on these domains. The functional equation speaks for itself.
\end{proof}

You can see quite clearly how we can make cake work of a lot of infinite compositions by comparing them to sums. We can now state the first theorem which is really home to this paper. It is precisely the first form of our asymptotic theorems. 

\begin{theorem}[The Asymptotic Theorem of the first kind]\label{thmASYM1}
The function $\beta_{\lambda,\mu}(s)$ satisfies the asymptotic equation:

$$
\ln \beta_{\lambda, \mu} (s+1) / \mu = \beta_{\lambda,\mu}(s) + \mathcal{O}(e^{-\lambda s})\,\,\text{as}\,\,|s| \to \infty\,\,\text{while}\,\, |\arg(\lambda s)| < \pi /2\\
$$
\end{theorem}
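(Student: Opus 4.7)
The plan is to reduce the asymptotic statement to an elementary estimate on $\ln(1+e^{-\lambda s})$ by exploiting the functional equation from the $\beta$ Theorem. Starting from
\[
\beta_{\lambda,\mu}(s+1) \,=\, \frac{e^{\mu\beta_{\lambda,\mu}(s)}}{1+e^{-\lambda s}},
\]
I would take logarithms of both sides, choosing the branch of $\ln\beta_{\lambda,\mu}(s+1)$ that is consistent with the right-hand side (so that the equation below holds exactly rather than modulo $2\pi i$). Dividing by $\mu$ then gives the identity
\[
\frac{\ln\beta_{\lambda,\mu}(s+1)}{\mu} \,=\, \beta_{\lambda,\mu}(s) \,-\, \frac{\ln(1+e^{-\lambda s})}{\mu}.
\]

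The next step is to estimate the remainder. The hypothesis $|\arg(\lambda s)|<\pi/2$ confines $\lambda s$ to the open right half-plane; in any closed subsector $|\arg(\lambda s)|\le \pi/2 - \delta$ we have $\Re(\lambda s) \ge |\lambda s|\sin\delta$, so $|e^{-\lambda s}|\to 0$ as $|s|\to\infty$. Once $|e^{-\lambda s}|<1$, the principal branch of $\ln(1+w)$ admits the Taylor expansion $\ln(1+w) = w - w^2/2 + \cdots$, and applying it with $w=e^{-\lambda s}$ yields
\[
\ln(1+e^{-\lambda s}) \,=\, e^{-\lambda s} + O(e^{-2\lambda s}) \,=\, O(e^{-\lambda s}).
\]
Since $\mu$ is a fixed nonzero constant, dividing by $\mu$ preserves the order of magnitude, and combining with the identity above gives the claimed asymptotic.

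The only delicate point is the choice of branch of $\ln\beta_{\lambda,\mu}(s+1)$, since $\beta_{\lambda,\mu}(s)$ itself can be large, so $\mu\beta_{\lambda,\mu}(s)$ wanders freely around $\mathbb{C}$ and the principal logarithm of $\beta_{\lambda,\mu}(s+1)$ need not equal $\mu\beta_{\lambda,\mu}(s) - \ln(1+e^{-\lambda s})$ on the nose; they differ by an integer multiple of $2\pi i$. I would handle this by interpreting $\ln\beta_{\lambda,\mu}(s+1)$ in the theorem as the branch selected by the functional equation itself, namely $\mu\beta_{\lambda,\mu}(s)-\ln(1+e^{-\lambda s})$ with the principal $\ln$ on the small factor. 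With that convention, there is no choice to make, the remainder is literally $-\ln(1+e^{-\lambda s})/\mu$, and the bulk of the proof collapses to the one-line Taylor estimate above. I expect this branch bookkeeping, rather than any analytic difficulty, to be the main obstacle the reader must accept.
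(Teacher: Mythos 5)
Your proof is correct and follows exactly the same route as the paper: take logarithms of the functional equation $\beta_{\lambda,\mu}(s+1) = e^{\mu\beta_{\lambda,\mu}(s)}/(1+e^{-\lambda s})$, identify the error term as $-\ln(1+e^{-\lambda s})/\mu$, and bound it by $\mathcal{O}(e^{-\lambda s})$ on the sector. The only addition is your explicit discussion of branch selection, which the paper leaves implicit but which does not change the argument.
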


\begin{proof}
Since,

$$
\ln \beta_{\lambda , \mu}(s+1) / \mu = \beta_{\lambda , \mu}(s) - \ln(1+e^{-\lambda s})/\mu\\
$$

And since $\ln(1+e^{-\lambda s}) = \mathcal{O}(e^{-\lambda s})$ on the specified domain, the result.
\end{proof}

Now, this is very important to us for a couple of reasons. The most prominent being, this function gets arbitrarily close to satisfying the tetration functional equation:

$$
\ln F(s+1)/\mu = F(s)\\
$$

So the further to the right we move, the more we unlock a closeness to tetration. We've attached here a couple of graphs. It is important to note that these functions have volatile behaviour where they get arbitrarily large, and then get arbitrarily close to zero. For this sake, in the hue plots, I have set overly large values to default to the value $0$ as to not cause overflow errors. It will still uncover the shape of tetration. So, just know that a pitch black is either zero or an overflow in this hue mapping. Unfortunately, there's not much to be done in these cases without a supercomputer.

\begin{figure}
    \centering
    \includegraphics[scale=0.45]{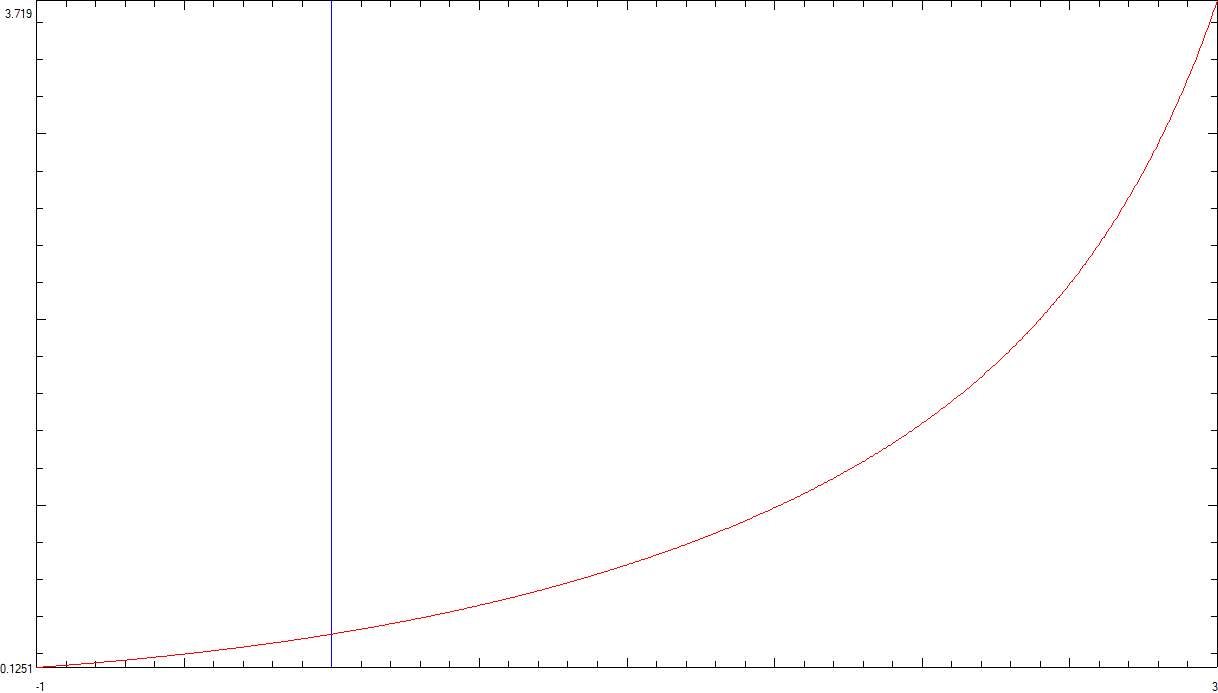}
    \caption{This is the function $\beta_{1,1}(x)$ for $-1 \le x \le 3$. This is just before it starts to explode and show tetration level growth.}
    \label{fig:beta_1_1_real}
\end{figure}

\begin{figure}
    \centering
    \includegraphics[scale = 0.3]{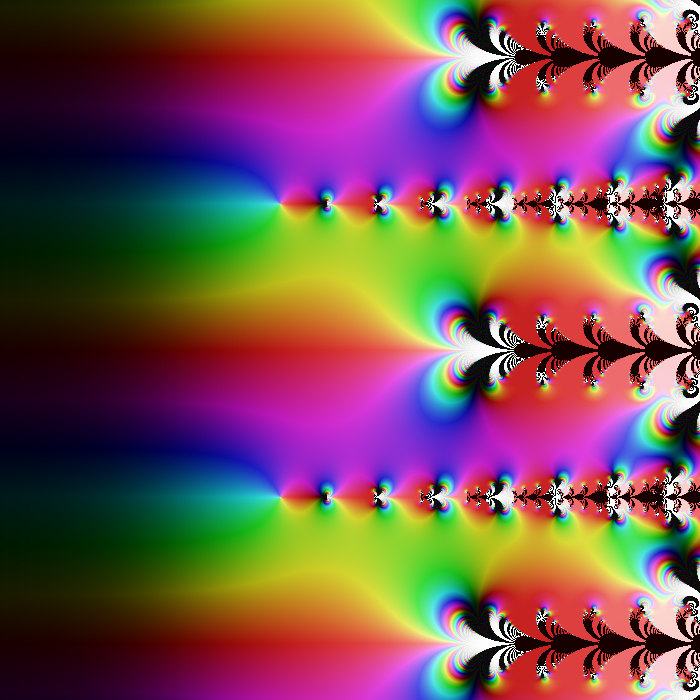}
    \caption{This is the function $\beta_{1,1}(s)$ for $-5 \le \Re(s) \le 10$ and $|\Im(s)| \le 7.5$. You can see the period and the super-exponential nature. You can also see the poles coagulating at $\Im(s) = (2k+1)\pi$. Recall that overflow errors are assigned a zero; a black pixel.} 
    \label{fig:beta_1_1}
\end{figure}

\begin{figure}
    \centering
    \includegraphics[scale = 0.3]{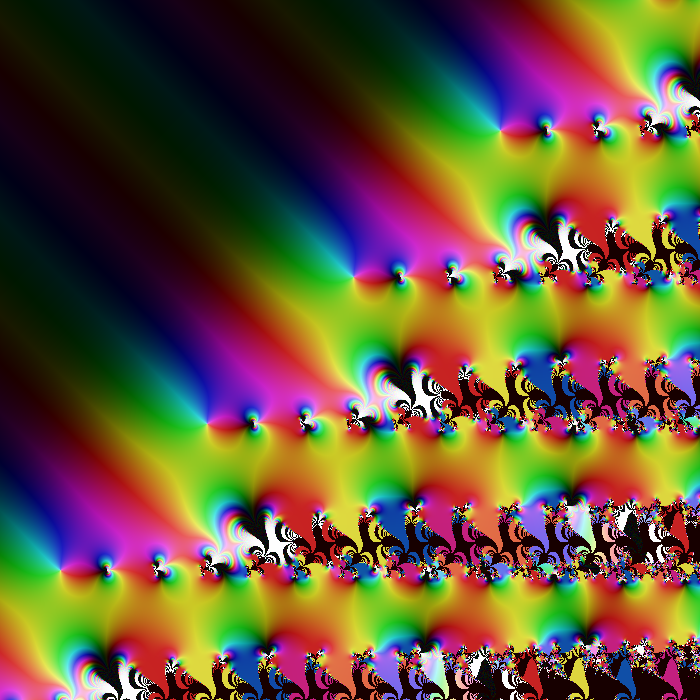}
    \caption{This is the function $\beta_{1+i,1+i}(s)$ for $-5 \le \Re(s) \le 10$ and $|\Im(s)| \le 7.5$. You can see the period and the super-exponential nature. You can also see the poles coagulating at $(1+i) (s-j) = (2k+1)\pi i$. Recall that overflow errors are assigned a zero; a black pixel.}
    \label{fig:beta_1_I}
\end{figure}

\section{The Second Construction of $\beta$}\label{sec5}
\setcounter{equation}{0}

Although we've successfully constructed $\beta$, there is still the trouble of accurately calculating it. For this, we turn to the most effective manner of programming in the $\beta$ function. This method was spawned by an observation of Sheldon Levenstein; in which he employed a clever change of variables. For this, we'll return to the infinite composition notation and make a change of variables.

To begin, let's let $\log(w)/\lambda = s$. Then our infinite composition changes its flavour a tad:

\begin{align*}
    \beta_{\lambda , \mu}(s) &= \OmSum_{j=1}^\infty \frac{e^{\mu z}}{e^{\lambda(j-s)} + 1}\,\bullet z\\
    &= \OmSum_{j=1}^\infty \frac{e^{\mu z}}{\frac{e^{\lambda j}}{w} + 1}\,\bullet z\\
    &= g_{\lambda , \mu}(w)\\
\end{align*}

Now, intrinsic to $g_{\lambda,\mu}(w)$ is that it's holomorphic for $w \neq -e^{\lambda j}$ for $j \ge 1$, $\Re(\lambda) > 0$ and $\mu \in \mathbb{C}$. We can see this either by the change of variables, or by simply showing the infinite composition converges compactly normally on this domain. Hidden in this is that $g_{\lambda, \mu}(w)$ is holomorphic for $|w| < e^{\Re \lambda}$. And that it has a Taylor expansion in a neighborhood of zero.

This brings us to the second manner of constructing $\beta_{\lambda,\mu}$. We can recursively discover the Taylor series of $g_{\lambda , \mu}$. If,

$$
g_{\lambda,\mu}(e^\lambda w) = \frac{e^{\mu g_{\lambda ,\mu}(w)}}{1+1/w}\\
$$

And if,

$$
a_k = \frac{d^k}{dw^k}\Big{|}_{w=0} g_{\lambda,\mu}(w)\\
$$

Then,

$$
e^{\lambda k} a_k = \frac{d^{k-1}}{dw^{k-1}}\Big{|}_{w=0} \left(\mu g_{\lambda,\mu}'(w) \frac{e^{\mu g_{\lambda ,\mu}(w)}}{1+1/w} + \frac{we^{\mu g_{\lambda,\mu}(w)}}{(1+w)^2}\right)\\
$$

Which is discoverable as an expression in the previous $k-1$ terms; and which by inspection, the sequence on the right must grow like $\mathcal{O}(k!)$ (its domain of holomorphy is $|w| < 1$). Thereby, we must expect that $a_k = \mathcal{O}(e^{-\lambda k} k!)$.

This gives us a Taylor expansion that is surprisingly easy to calculate, with fast convergence:

$$
g_{\lambda , \mu}(w)  = \sum_{k=1}^\infty a_k \frac{w^k}{k!}\\
$$

Which implies that the $\beta$ function has an exponential series representation:

$$
\beta_{\lambda , \mu}(s)  = \sum_{k=1}^\infty \frac{a_k}{k!} e^{k \lambda s}\\
$$

Valid for $\Re(\lambda s) < \Re \lambda$. One can show this without relying on infinite compositions by simply analysing the recursive process defining $a_k$; and using Cauchy's test, determine the radius of convergence of the series; which is $e^{\Re\lambda}$. Thereby, it is not necessary to use infinite compositions to construct $\beta$, but it's significantly simpler.

The function $g_{\lambda , \mu}$ is of importance to us besides this construction. We'll detail this in the next chapter.

\chapter{Mock Abel and Mock Schr\"{o}der Coordinates}

\section{Mock Abel and Mock Schr\"{o}der Coordinates}\label{sec6}
\setcounter{equation}{0}

As a brief note; we begin now to denote $\log$ as the logarithm $\log$ base $b = e^\mu$; and denote $\exp$ as the exponential $\exp$ base $b = e^\mu$. This will be the convention for the remainder of this paper; and $\ln$ will take the place of the natural logarithm base $b = e$.

The functions $\beta$ and $g$ are two peas in a pod. They play a role not dissimilar to the role the inverse Abel function and the inverse Schr\"{o}der function play (respectively) in Complex Dynamics. They describe very similar functional equations. To refresh the reader, the function $\alpha$ is an Abel function of $f$ if:

$$
\alpha(f(s)) = \alpha(s) + 1\\
$$

And similarly, if $\Phi$ is the Schr\"{o}der function of $f$, then, for some multiplier $c$:

$$
\Phi(f(s)) = c \Phi(s)\\
$$

Now, these are commonly referred to as coordinate changes. Where an application of $s \mapsto f(s)$ is mapped to $z \mapsto z+1$; or $s \mapsto f(s)$ is mapped to $z \mapsto cz$; through either the Abel coordinate or the Schr\"{o}der coordinate (respectively). These coordinate changes have countless applications in complex dynamics, and serve as a focal point at describing the dynamics of iterated functions.

Now, we don't care so much for Abel functions, and Schr\"{o}der functions; we care more for their inverses. Thereby, we care for the functions:

$$
\alpha^{-1}(s+1) = f(\alpha^{-1}(s))\\
$$

And,

$$
\Phi^{-1}(c s) = f(\Phi^{-1}(s))\\
$$

The inverse Abel function of $f = \exp$ is, in and of itself, tetration. And the $\beta$ function does not exactly satisfy this equation. It instead satisfies what I refer to as a mock Abel equation. It's almost Abel, but it is not exactly Abel. Where, quite frankly, it satisfies the equation:

$$
\beta(s+1) = \exp  \left(\beta(s) + \epsilon\right)
$$

Where $\epsilon \to 0$ as $\Re(s) \to \infty$. Where we've chosen the appropriate exponential $\exp(z) = e^{\mu z}$ and $\epsilon$ depends on $\lambda$ and $\mu$ in addition to $s$. Similarly, we have the same thing for $g$, where it ``almost" satisfies the inverse Schr\"{o}der equation. That being:

$$
g(cw) = \exp \left( g(w) + \epsilon\right)\\
$$

Where $\epsilon \to 0$ as $|w| \to \infty$. And we've assigned $e^\lambda = c$. Which is little different than the above. But are related by the change of variables $\log(w)/\lambda = s$. It's important to note that $\epsilon$ has a removable singularity at $w=\infty$. As this $\epsilon = -\log(1+1/w)$. So we can wholeheartedly declare that $g$ satisfies the Schr\"{o}der equation at $w = \infty$ on the Riemann-Sphere. 

Please remember we are choosing $\log$ as we chose $\exp = e^{\mu z}$, as the direct inverse, so it depends on $\mu$. But as we don't move $\mu$, we choose this syntax; and hope there is no confusion.

The rest of this article will focus on finding a function $\tau$ such that:

$$
\beta(s+1) + \tau(s+1) = \exp\left(\beta(s) + \tau(s)\right)\\
$$

And a function $r(w)$ such that:

$$
g(cw) + r(cw) = \exp\left( g(w) + r(w)\right)\\
$$

Which is to say: how well can we use the mock solutions to produce the actual solutions? Are they analytic? If so, where? What about smooth; are they mainly just smooth? What are its points of discontinuities? Are they dense?

\section{The $\tau$ error and Levenstein's $\rho$ error}

There are two ways of measuring the error between $\beta$ and tetration. The first way is a direct manner; the second way is summative in nature. We denote these processes $\tau$ and $\rho$ respectively. They are related by a simple identity:

$$
\tau^n = \sum_{j=1}^n \rho^j\\
$$

And for the remainder of this paper we will assume that $\tau$ and $\rho$ are related in this manner. To begin, it is much more natural to work with $\tau$; but from an analytic perspective; it's much more convenient to work with $\rho$. It isn't very hard to describe the $\tau$ process.

$$
\tau^n(s) = \log^{\circ n} \beta(s+n) - \beta(s)
$$

Or,

$$
\beta(s+1) + \tau^{n}(s+1) = \exp\left( \beta(s) + \tau^{n+1}(s)\right)\\
$$

Which describes a recursive process:

\begin{equation}\label{eq:TAU}
\tau^{n+1}(s) = \log\left(1 + \frac{\tau^{n}(s+1)}{\beta(s+1)}\right) - \log(1+e^{-\lambda s})\\
\end{equation}

This is derived from identifying $\log \beta(s+1) = \beta(s) - \log(1+e^{-\lambda s})$ and using run-of-the-mill $\log$ identities. This process tends to converge geometrically; but it has its anomalies and branching problems. So as it converges geometrically; it may not converge uniformly. The rest of this paper is mostly a quest to find out how uniformly it does converge.

To construct the $\rho$ process from the $\tau$ process, we need only apply a bit of algebra. To illustrate:

$$
F_n(s) = \beta(s) + \tau^n(s)\\
$$

Satisfies a similar mock Abel equation:

$$
F_n(s+1) = \exp(F_n(s) + \epsilon_n)\\
$$

Where $\epsilon_n$ is smaller and vanishes as $n\to\infty$. The question is, how well does it vanish. It is here, where we can make the change of variables to $\rho$ and we get an expression for $\epsilon_n$. It's just $\rho^{n+1}$. 

\begin{align*}
    F_n(s) &= \beta(s) + \sum_{j=1}^n \rho^j(s)\\
    F_{n}(s+1) &= \exp\left(F_n(s) + \rho^{n+1}(s)\right)\\
    &= \exp \left( F_{n+1}(s)\right)\\
\end{align*}

This defines an inherited recursive process from $\tau$, which is a degree more convenient than $\tau$. That is:

\begin{align*}
    \rho^{n+1}(s) &= \log F_n(s+1) - F_n(s)\\
                  &= F_{n+1}(s) - F_n(s)\\
\end{align*}

We begin by assigning $\rho^0 = 0$ and $\rho^1 = -\log(1+e^{-\lambda s})$. Upon this point, the process can be further reduced into Levenstein's form:

\begin{equation}\label{eq:RHO}
    \rho^{n+1}(s) = \log\left(1+\frac{\rho^n(s+1)}{\beta(s+1) + \sum_{j=1}^{n-1}\rho^j(s+1)}\right)\\
\end{equation}

The processes defined in \eqref{eq:TAU} and \eqref{eq:RHO} are one and the same. But each has a different benefit than the other. The first proposition of this paper, is that the $\tau$ method can be reduced to Levenstein's form:

\begin{proposition}\label{PropLev}
The recursive process in \eqref{eq:TAU} can be reduced to \eqref{eq:RHO}.
\end{proposition}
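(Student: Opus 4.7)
The plan is to derive Levenstein's form \eqref{eq:RHO} by direct algebraic manipulation of \eqref{eq:TAU}, leveraging the definition $\rho^{n+1}(s) = \tau^{n+1}(s) - \tau^n(s)$ together with the telescoping identity $\tau^n = \sum_{j=1}^n \rho^j$ (with the convention $\tau^0 \equiv 0$). The whole argument is formal; no complex-analytic input is needed beyond basic $\log$ identities.

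First I would write down the two consecutive applications of \eqref{eq:TAU} at indices $n$ and $n+1$ and subtract them, which eliminates the common $-\log(1+e^{-\lambda s})$ term. Applying $\log a - \log b = \log(a/b)$ and clearing the inner fractions, the difference collapses to
\[
\rho^{n+1}(s) \;=\; \log\!\left(\frac{\beta(s+1)+\tau^{n}(s+1)}{\beta(s+1)+\tau^{n-1}(s+1)}\right).
\]
Then I substitute $\tau^{n-1}(s+1) = \sum_{j=1}^{n-1}\rho^j(s+1)$ and $\tau^{n}(s+1) = \sum_{j=1}^{n-1}\rho^j(s+1) + \rho^n(s+1)$ into this ratio; the shared sum in numerator and denominator factors out, and the expression reduces to $1 + \rho^n(s+1)/\bigl(\beta(s+1) + \sum_{j=1}^{n-1}\rho^j(s+1)\bigr)$, which is exactly \eqref{eq:RHO}.

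Finally I would verify the base cases so that the recursion is genuinely initialized in the claimed way. Taking $\tau^0 \equiv 0$ in \eqref{eq:TAU} yields $\tau^1(s) = -\log(1+e^{-\lambda s})$, so $\rho^1(s) = \tau^1(s) - \tau^0(s) = -\log(1+e^{-\lambda s})$, matching the prescription stated before \eqref{eq:RHO}; and for $n=1$ the empty sum $\sum_{j=1}^{0}\rho^j(s+1)$ is $0$, so the denominator in \eqref{eq:RHO} reads simply $\beta(s+1)$, consistent with the $n=2$ instance of \eqref{eq:TAU}. I do not foresee a substantive obstacle — the proposition is essentially the observation that differencing the $\tau$ recursion and placing both sides over a common denominator automatically produces the $\rho$ recursion. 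The only care required is bookkeeping with the index ranges of the partial sums so that the empty-sum convention lines up with the initialization.
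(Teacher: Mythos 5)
Your proposal is correct and takes essentially the same approach as the paper: both arguments amount to using the $n$-th instance of the recursion to eliminate the common $-\log(1+e^{-\lambda s})$ term and then applying the factorization $\log(a+b)=\log a+\log(1+b/a)$, landing on the telescoping $\rho^{n+1}=\tau^{n+1}-\tau^n$ with the partial sum in the denominator. The only cosmetic difference is that you difference two consecutive instances of the explicit form \eqref{eq:TAU}, whereas the paper manipulates the equivalent implicit exponential form $F_{n-1}(s+1)+\rho^n(s+1)=\exp(F_n(s)+\rho^{n+1}(s))$ and cancels $F_n(s)=\log F_{n-1}(s+1)$; the underlying algebra and the required initialization $\rho^0=0$, $\rho^1=-\log(1+e^{-\lambda s})$ are identical.
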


\begin{proof}
This result can be broken into a simple chain of equations. As such:

\begin{align*}
    \beta(s+1) + \tau^n(s+1) &= \exp\left(\beta(s) + \tau^{n+1}(s)\right)\\
    F_{n-1}(s+1) + \rho^{n}(s+1) &= \exp\left( F_n(s) + \rho^{n+1}(s)\right)\\
    \log\left( F_{n-1}(s+1) + \rho^n(s+1)\right) & = F_n(s) + \rho^{n+1}(s)\\
    F_n(s) + \rho^{n+1}(s) &= \log F_{n-1}(s+1) + \log\left(1+\frac{\rho^n(s+1)}{F_{n-1}(s+1)}\right)\\
    \rho^{n+1}(s) &= \log\left(1+\frac{\rho^n(s+1)}{F_{n-1}(s+1)}\right)\\
\end{align*}

Where we've cancelled $F_n(s) = \log F_{n-1}(s+1)$. It is necessary that we require $\rho^0 = 0$ and $\rho^1 = - \log(1+e^{-\lambda s})$; and that this recursive process begins at $n=1$ and continues for $n \ge 1$; it fails for $n<1$.
\end{proof}

We enter now two manners of determining how well $\beta$ approximates tetration. Where $\tau^n \to \tau$ uniformly and where the following sum converges compactly normally:

$$
\sum_{j=1}^\infty \rho^j(s)\\
$$

Is where we have an analytic tetration. Where $\tau^n \to \tau$ point wise is where we have at least convergence to a not necessarily analytic tetration. And similarly for $\sum_j \rho^j$. The remainder of this paper will be devoted into showing how and where the following claims hold:

\begin{align*}
    \tau^n(s) &\to \tau(s)\\
    \sum_{j=1}^\infty |\rho^j(s)| &< \infty\\
\end{align*}

We begin by deriving an Asymptotic theorem of the second kind. This is a strengthening of The Asymptotic Theorem of the first kind \ref{thmASYM1}. This theorem shall be reserved for a later section, as it requires a rather technical lemma.

\section{Understanding the domains of interest}\label{sec8}
\setcounter{equation}{0}

To begin this section we have to understand the orbits of $\beta(s)$, $\beta(s+n)$. And this requires a rudimentary understanding of the orbits $\exp^{\circ n}(z)$ for $\exp = e^{\mu z}$. Up until this point in the paper we have not mentioned something which was placed surreptitiously at the beginning.

This is to say, for $\mathcal{B} \subset \mathbb{C}$, and $s \in \mathbb{C}/\mathcal{B} = \mathcal{P}$ the orbits:

$$
\beta(s+n)\,\,\text{have a nice look to them}\\
$$

Where, additionally; there is a set $\mathcal{E} \subset \mathcal{P}$, such that a tetration can be made on $\mathcal{P}/ \mathcal{E}$ and:

$$
\int_{\mathcal{E}} dA = 0\\
$$

For $dA$ the standard area Lebesgue measure in $\mathbb{R}^2$. What we mean by nice is difficult to explain; but it means our process has a normal structure to it. This is quite the mouthful, and is difficult to visualize, so the author will elaborate, and break this into pieces. This section is intended to define where the orbits are nice, and where they aren't.

Commencing, normality is a popular term in complex dynamics, and is a difficult concept to grasp at first, but surprisingly simple once you get the gist of what it means. Supposing you have a sequence of functions:

$$
f_n : X \to Y\\
$$

For them to be a normal family; it simply means there is some subsequence $n_k$ such that:

$$
\lim_{k\to\infty} f_{n_k} \to f\,\,\text{converges uniformly on compact subsets}\\
$$

On $\mathcal{P} = \mathbb{C}/\mathcal{B}$ our error terms $\tau^n$ and $\rho^j$ will be normal. From normality, it is possible to derive holomorphy of the limit. But this holomorphy is confined to where the orbits of $\beta$ are nice.

Now, what we mean by ``nice" may seem a tad artificial, but it's the perfect condition for the development of $\tau$'s convergence. We call this condition ``weak normality'' as it is not quite normality; but it takes from normality what we need.

As to that we introduce:

\begin{definition}[Weak Normal Family]
A family of functions $f_n : X \to Y$, for $X, Y \subset \widehat{\mathbb{C}}$ domains, are a weak normal family if, for all $\mathcal{N} \subset X$:

$$
\limsup_{n\to\infty} \left|\left|\frac{1}{f_n(x)}\right|\right|_{x \in \mathcal{N}}  < \infty
$$
\end{definition}

This is only a slight difference from normal families. A consequence of being a normal family is that you are locally bounded. Which means for all $n$ and for all compact sets $\mathcal{K} \subset X$ the value $||f_n(x)||_{x \in \mathcal{K}} < M$. We have modified this result, such $f_n$ is not necessarily locally bounded, but its multiplicative inverse is locally bounded. In such a sense, it is where $h_n(x) = 1/f_n(x)$ are a normal family; but not quite.

This definition works for us, but it is lacking. The definition should account for when $f_n(x_0) = 0$; but since $\beta \neq 0$, we can ignore handling these cases. I think handling these cases would just be too confusing.

The idea of the definition to follow, is that, where-ever $\{\beta(s+n)\}_{n=0}^\infty$ are a weak normal family, we can make a good guess of the asymptotics of $\tau$ and $\rho$.

So let us now define two sets of incredible importance to us. Let us call them the weak Fatou set, and the weak Julia set; which we denote $\mathcal{P}$ and $\mathcal{B}$. This is precisely the domains of weak normality, and non-weak normality. Precisely where this condition works, and where it doesn't. These domains are related by the identity $\mathcal{P} = \mathbb{C}/\mathcal{B}$ and $\mathcal{B} = \mathbb{C}/\mathcal{P}$. These, effectively, replace the role of Fatou sets and Julia sets in this discussion; compared to the typical discussion of tetration.

\begin{definition}[The Weak Fatou Set]\label{defWFAT}
The weak Fatou set $\mathcal{P} \subset \mathbb{C}$ of $\beta$ is an open domain, in which for all $s \in \mathcal{P}$, there exists a neighborhood $\mathcal{N} = \{y \in \mathbb{C}\,|\,|y-s| < \delta\}$ such that $\mathcal{N} \subset \mathcal{P}$, and the orbits $\beta(\mathcal{N} + n)$ are a weak normal family.
\end{definition}

Showing $\mathcal{P}$ is the domain we'll be able to derive a tetration on is difficult. So, to begin, we'll explain why it matters to categorize $\mathcal{P}$ and $\mathcal{B}$ by playing with a toy model in the next section.

\section{Linearizing the recursion of $\tau$}\label{sec9}
\setcounter{equation}{0}

The goal of this section is to provide a framework for constructing the inverse Abel function of $\exp$ on the weak Fatou set. This result is particularly hard to envision, as we haven't drawn out exactly what the weak Fatou set looks like. We will stave off describing properties of the weak Fatou set for later in this paper; and instead dive into why it's so important to us.

The first result we need to derive to prove an inverse Abel function exists, is to show $\tau^n(s)$ is a normal family for $s \in \mathcal{P}$. This can be equivalently stated that $\tau^n(s)$ is locally bounded on $\mathcal{P}$. It won't come out exactly like this, but for intents and purposes, it will be effectively this result.

We take this section as a momentary breather on all the iterated $\log$'s we've introduced so far. We're going to simplify the discussion greatly through linear approximations. This unearths the heart of our method; but leaves out much of the subtleties and difficulties of nesting logarithms.

To begin, we take:

$$
\tau^{n+1}(s) = \log \left(1+\frac{\tau^{n}(s+1)}{\beta(s+1)}\right) - \log(1+e^{-\lambda s})\\
$$

And we want to show, using Banach's fixed point theorem, that this expression converges geometrically. This can be a tad tricky, but we do show this in the main theorems of this paper. But first, it is necessary to derive normality.

To derive normality; we have to sort of guess how $\tau^n$ grows as $n\to\infty$. This requires giving a quick guess of what $\rho^j$ looks like. Much of this is a quest to use the linearization $\log(1+e^{-\lambda s}) \approx \frac{e^{-\lambda s}}{\mu}$, for large $\Re(s)$, effectively. Once you can effectively describe $\tau$; for large values of $s$ the logarithm looks precisely like this approximation; and $\tau$ can be nested as this linear approximation.

It is at this point where we must enter in a discussion of $\mu$, and where $\beta$'s dependence on $\mu$ starts to make an appearance. It doesn't spoil the convergence too much; but it does cause a bit of a headache, in how to handle these things. 

The linear approximation we are trying to make is:

$$
\tau^{n+1}(s+k) \approx \frac{\tau^n(s+k+1)}{\mu \beta(s+k+1)} - \frac{e^{-\lambda (s+k)}}{\mu}\\
$$

Which is very much accurate for $s \in \mathcal{P}$; especially if you grow $k$. By the definition of the weak Fatou set; we can expect:

$$
\frac{1}{\mu \beta(s+k+1)} = \mathcal{O}(1)\\
$$

We want to use this, to show that this approximation stays bounded. In turn, we can use the boundedness of this approximation, to show that $\tau$ itself is bounded. To begin, this approximation reduces to the summation:

$$
\tau^{n+1}(s+k) \approx -\sum_{j=0}^n \dfrac{e^{-\lambda (s+k+j)}}{\mu^{j+1} \prod_{c=1}^j \beta(s+k+c)}\\
$$

For simplicity, we will call:

$$
\widetilde{\tau}^{n+1}(s+k) = -\sum_{j=0}^n \dfrac{e^{-\lambda (s+k+j)}}{\mu^{j+1} \prod_{c=1}^j \beta(s+k+c)}\\
$$

Which satisfies the recursion:

$$
\widetilde{\tau}^{n+1}(s+k) = \frac{\widetilde{\tau}^n(s+k+1)}{\mu \beta(s+k+1)} - \frac{e^{-\lambda (s+k)}}{\mu}\\
$$

For this to converge for $\mu \in \mathbb{C}$--we need a non-trivial result about $\beta$. This line of questioning may seem erratic; but, at the heart of $\tau$'s convergence, is this linear approximation.

We need to define a bound before continuing, and call it $A_\mu$; which will determine our linear approximation is bounded everywhere $\Re \lambda > \log A_\mu$. For the sake of completeness, we can denote $A_\mu$:

$$
\limsup_{k\to\infty} \left|\left| \frac{1}{\mu \beta(s+k+1)} \right|\right|_{s \in \mathcal{N}} = A_\mu\\
$$

This bound will work for a compact neighborhood $\mathcal{N}$ of a point $s \in \mathcal{P}$, by the properties of the weak Fatou set; as $\mathcal{P}$ is open; and the above limits were done under this topology. This ultimately means that we get the best form $\widetilde{\tau}^n$, which is:

$$
||\widetilde{\tau}^{n+1}(s+k)||_{\mathcal{N}} \le \sum_{j=0}^n \left|\left|e^{-\Re(\lambda s) + \Re \lambda k + (\log A_\mu-\Re \lambda)j}\right|\right|_{\mathcal{N}}\\
$$

Using this asymptotic the value $\mu$ is no longer irrelevant; it depends on $\lambda$ with quite some discretion. And consequently as $n\to\infty$ the sum:

$$
-\sum_{j=0}^\infty \dfrac{e^{-\lambda (s+k+j)}}{\mu^{j+1} \prod_{c=1}^j \beta(s+k+c)}\\
$$

Converges and is analytic. We summarize in the following theorem:

\begin{theorem}[The Linearization Theorem]\label{thmLIN}
The linearized recursion:

\begin{align*}
\widetilde{\tau}^0 &= 0\\
\widetilde{\tau}^{n+1}(s) &= \dfrac{\widetilde{\tau}^{n}(s+1)}{\mu \beta(s+1)} - \frac{e^{-\lambda s}}{\mu}\\
\end{align*}

Converges uniformly on the compact neighborhood $\mathcal{N}\subset\mathcal{P}$ of the weak Fatou set when $\Re \lambda > \log A_\mu$. For:

$$
\limsup_{k\to\infty}\left|\left|\frac{1}{\mu \beta(s+k+1)}\right|\right|_{\mathcal{N}} = A_\mu\\
$$

\end{theorem}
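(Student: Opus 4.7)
The plan is to unroll the recursion into an explicit series and then majorize it by a convergent geometric series, using the hypothesis $\Re\lambda > \log A_\mu$ to close the estimate.

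First I would verify by induction on $n$ the closed form
$$\widetilde{\tau}^{n+1}(s) \;=\; -\sum_{j=0}^{n}\frac{e^{-\lambda(s+j)}}{\mu^{j+1}\,\prod_{c=1}^{j}\beta(s+c)},$$
with the convention that the empty product at $j=0$ equals $1$. The base case $n=0$ reads $\widetilde{\tau}^{1}(s) = -e^{-\lambda s}/\mu$, in agreement with the recurrence applied to $\widetilde{\tau}^{0}\equiv 0$; the inductive step is a one-line substitution of the formula for $\widetilde{\tau}^{n+1}(s+1)$ into the recurrence, followed by a reindexing of the summation. Once this identity is in hand, uniform convergence of the sequence $\widetilde{\tau}^{n}$ on $\mathcal{N}$ is equivalent to compact normal convergence of the corresponding infinite series.

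Next I would fix $\varepsilon > 0$ small enough that $(A_\mu + \varepsilon)\,e^{-\Re\lambda} < 1$, which is allowed precisely because $\Re\lambda > \log A_\mu$. By definition of $A_\mu$ there is an integer $K \ge 1$ with
$$\left\|\frac{1}{\mu\,\beta(s+c)}\right\|_{\mathcal{N}} \;\le\; A_\mu + \varepsilon \qquad \text{for all } c \ge K.$$
For the remaining indices $1 \le c < K$, weak normality (Definition~\ref{defWFAT}) guarantees that $\beta(s+c) \ne 0$ on each translate $\mathcal{N}+c$, so the holomorphic function $1/(\mu\,\beta(s+c))$ is bounded on the compact set $\mathcal{N}$ by some $M \ge 1$, uniformly in $c$. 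Splitting the product at $c = K$ and combining the two regimes yields, for every $j \ge 0$,
$$\left\|\prod_{c=1}^{j}\frac{1}{\mu\,\beta(s+c)}\right\|_{\mathcal{N}} \;\le\; C\,(A_\mu+\varepsilon)^{j}$$
for some $j$-independent constant $C$ depending only on $\mathcal{N}$, $M$, and $K$.

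Combined with the trivial estimate $\|e^{-\lambda(s+j)}\|_{\mathcal{N}} \le C'\,e^{-j\Re\lambda}$, the $j$-th summand of the series is dominated on $\mathcal{N}$ by $CC'\,\bigl((A_\mu+\varepsilon)\,e^{-\Re\lambda}\bigr)^{j}$, the general term of a convergent geometric series. The Weierstrass $M$-test then upgrades pointwise convergence to compact normal convergence of the partial sums, whence $\widetilde{\tau}^{n} \to \widetilde{\tau}$ uniformly on $\mathcal{N}$ and the limit is holomorphic. The main technical wrinkle, rather than a genuine obstacle, is the bookkeeping of the previous step: the $\limsup$ hypothesis only controls the tail $c \ge K$, so one must absorb the pre-asymptotic factors $1 \le c < K$ into a single $j$-independent constant using compactness of $\mathcal{N}$ and the non-vanishing of $\beta$ on $\mathcal{P}$. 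Once that constant has been isolated, everything else is a routine geometric majorant.
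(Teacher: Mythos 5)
Your proof is correct, and it takes a route that differs from the paper's in one organizational respect worth flagging. The paper first establishes uniform convergence of $\widetilde{\tau}^{n}(s+k)$ for $k$ beyond a threshold $K$, where the bound $\|1/(\mu\beta(s+k+c))\|_{\mathcal{N}} \le A_\mu + \epsilon$ already applies to every factor in the product, so no prefix estimate is needed; it then iterates the linear recursion backward $k$ times via the functional equation $\widetilde{\tau}(s+k-1) = \widetilde{\tau}(s+k)/(\mu\beta(s+k)) - e^{-\lambda(s+k-1)}/\mu$ to transport convergence from $s+k$ back to $s$, finally noting that $\epsilon$ can be taken to zero. You instead stay at $s$ and majorize the unrolled series directly, splitting the product $\prod_{c=1}^{j}$ at $c = K$ and absorbing the finitely many pre-asymptotic factors into a $j$-independent constant using compactness of $\mathcal{N}+c$ and non-vanishing of $\beta$. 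Both arguments turn on the same geometric majorant $\bigl((A_\mu+\varepsilon)e^{-\Re\lambda}\bigr)^j$; yours consolidates the bookkeeping into a single estimate, while the paper trades the prefix constant for a pull-back step. One small attribution quibble: you cite weak normality as the reason $\beta(s+c)\neq 0$ on the prefix translates, but weak normality only constrains the tail $\limsup$; the actual reason $\beta$ never vanishes is structural (it is an infinite composition terminating in an exponential divided by a non-vanishing denominator), a fact the paper notes explicitly. The conclusion you need --- a uniform bound on $1/(\mu\beta(s+c))$ for the finitely many $1 \le c < K$ --- still holds, just for that structural reason rather than by weak normality.
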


\begin{proof}
Pick a neighborhood $\mathcal{N} \subset \mathcal{P}$ of the weak Fatou set. The value:

$$
||\widetilde{\tau}^{n+1}(s+k)||_{\mathcal{N}} \le \sum_{j=0}^n \left|\left| \frac{e^{-\lambda(s+k+j)}}{\mu^{j+1}\prod_{c=1}^j \beta(s+k+c)}\right|\right|_{\mathcal{N}}\\
$$

Pick $\Re \lambda > \log \left( A_\mu + \epsilon\right)$. For large enough $K$, and $k>K$ we have:

$$
\left|\left| \frac{1}{\mu \beta(s+k)}\right|\right|_{\mathcal{N}} \le
A_\mu + \epsilon\\$$

Where $\epsilon$ is arbitrarily small for large enough $K$. And therefore:

$$
\left|\left| \frac{1}{\mu^{j+1}\prod_{c=1}^j \beta(s+k+c)}\right|\right|_{\mathcal{N}} \le e^{j \log\left(A_\mu + \epsilon\right)}\\
$$

Therefore:

$$
||\widetilde{\tau}^{n+1}(s+k)||_{\mathcal{N}} \le \sum_{j=0}^n \left|\left|e^{-\lambda(s+k) - (\lambda - \log (A_\mu + \epsilon))j}\right|\right|_{\mathcal{N}}\\
$$

Therefore $\widetilde{\tau}^{n+1}(s+k) \to \widetilde{\tau}(s+k)$ converges uniformly for $s \in \mathcal{N}$ while $n\to\infty$ because this series converges for $\Re \lambda > \log(A_\mu + \epsilon)$.  By the functional equation:

$$
\widetilde{\tau}(s+k-1) = \frac{\widetilde{\tau}(s+k)}{\mu \beta(s+k)} - \frac{e^{-\lambda(s+k-1)}}{\mu}\\
$$

We know that $\widetilde{\tau}(s)$ converges; or that:

$$
\sum_{j=0}^\infty \left|\left| \frac{e^{-\lambda(s+j)}}{\mu^{j+1}\prod_{c=1}^j \beta(s+c)}\right|\right|_{\mathcal{N}} < \infty\\
$$

for $\Re \lambda > \log(A_\mu + \epsilon)$; since $K$ is now irrelevant, we can set $\epsilon = 0$. Which gives the result.
\end{proof}

We will use this result to derive that $\tau^n$ is normal on the weak Fatou set; for specific $\lambda$. This can be done because as we increase $k$ we get closer and closer to the linear case.

\section{Some examples of what to expect}\label{sec10}
\setcounter{equation}{0}

Before we move on to proving the real difficult stuff; it's helpful to visualize some of these constants and sets we've introduced. For that we'll fix a couple values for $\mu$ and $\lambda$. This is a place where we'll take a breather. The case we'll begin with is one well studied in the field of iteration theory; with results dating to Euler.

Let us begin this breath, by setting $\mu = \ln(2)/2$ for the principle branch of $\log$ with base $e$. And let us only let $\lambda$ vary. This is precisely:

$$
\beta(s) = \OmSum_{j=1}^\infty \frac{\sqrt{2}^z}{1+e^{\lambda(j-s)}}\,\bullet z\\
$$

We start by determining the weak Fatou set. It isn't hard for a veteran to notice that $\lim_{\Re s\to\infty} \beta(s) \to 2$. But it may take some explaining for some one unfamiliar. There is an attractive fixed point of $h(z) = \sqrt{2}^z$ at $z=2$; and the orbit of $h^{\circ n}(0)$ converges to $2$; as $0$ is in its Fatou set. The orbits of $\beta$ behave asymptotically like the orbits of $h^{\circ n}(0)$ because $\beta(-\infty) = 0$.

Now we can do this compactly everywhere, excepting near singularities; where we may experience a slight divergence. So, although not effectively discovered, the weak Fatou set is:

$$
\mathcal{P} \supset \{s \in \mathbb{C}\,|\, |s-s_j| > \delta,\,\lambda(j-s_j) = (2k+1)\pi i,\, j\ge 1,\, j,k \in \mathbb{Z}\}\\
$$

For $\delta$ sufficiently small, but not too small. From here:

$$
\lim_{k\to \infty}\beta(s+k) \to 2\\
$$

And the constant $A_\mu$ is given as:

$$
A_\mu =\limsup_{k\to\infty} \left|\left|\frac{2}{\log(2)\beta(s+k)}\right|\right|_{\mathcal{N}} \to \frac{1}{\log(2)}\\
$$

So the linear approximation will converge for $\Re \lambda > - \log\log(2)$; and just as well, so will the actual non-linear approximation. To demonstrate, in Figure \ref{fig:ROOTTWO} is a graph to about 20 digit accuracy of this inverse Abel function when $\lambda = 1$:

\begin{figure}
    \centering
    \includegraphics[scale = 0.3]{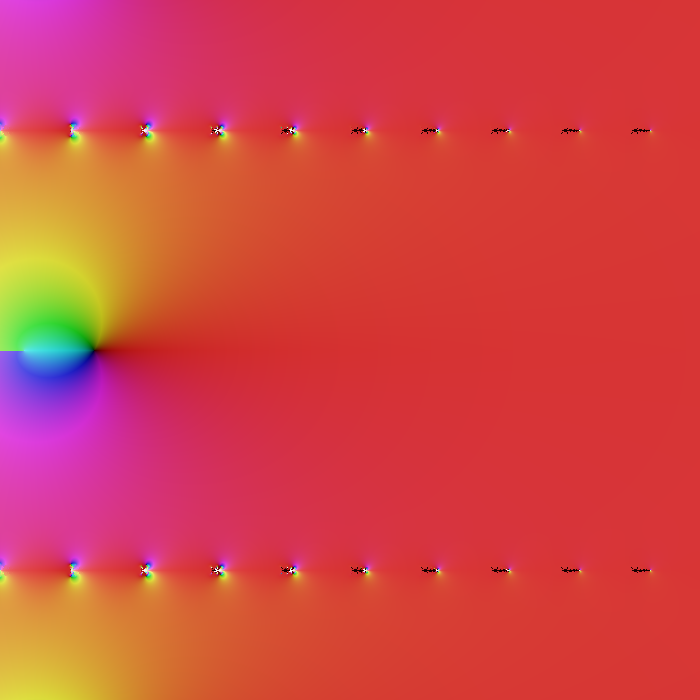}
    \caption{The inverse Abel function of $\exp$: $\beta(s) + \tau(s)$ for $\mu = \ln(2)/2$ and $\lambda = 1$. Accurate to about twenty digits.}
    \label{fig:ROOTTWO}
\end{figure}

Although there are small points of divergence in Figure \ref{fig:ROOTTWO}; they are translation invariant; and belong to the weak Julia set. They are, again, what we want to throw away from our result. For the most part; we have a well behaved tetration. Thus, we can expect the weak Fatou set to essentially be $\mathbb{C}$ minus $\lambda(j-s) = (2k+1) \pi i$ upto a fractal or two near these singularities. Which is essentially where $\beta \approx 1$ or a forward orbit of $1$; which means the $\log$ may hit a $0$; which causes a negligible branch-cut in the area measure.

A similar graph can be made for all $\Re\lambda > - \log \log (2)$. As soon as you shrink $\lambda$ smaller you get more chaotic behaviour. Very similar results can be observed for $b = e^{\mu}$ in the Shell-Thron region. Where this is precisely where $A_\mu$ is non-zero. Everywhere else, you can expect the value $A_\mu = 0$ (in a point wise limit), if it exists.\\

For the second example we will consider $\mu =1$ and $\lambda = 1$. This is a rather chaotic case, and isn't as simple as it seems. To begin, let us declare $\beta$,

$$
\beta(s) = \OmSum_{j=1}^\infty \frac{e^z}{1+e^{j-s}}\,\bullet z\\
$$

Understanding the dynamics of this $\beta$ is very difficult, as it's nearly the dynamics of $e^z$. The weak Fatou set throws many curve balls at us; it's precisely the empty set. To begin, the linearization converges everywhere excepting the singularities.  But it does not converge uniformly anywhere. And this is because even though the result converges everywhere, the weak Fatou set is empty. Particularly $\mathbb{R} \subset \mathcal{B}$; the real line belongs to the weak Julia set. The induced inverse Abel function on $\mathbb{R}$ will be smooth; it will be nowhere analytic though.

This is an in depth result shown multiple times by Sheldon Levenstein--largely through graphical analysis. Where $\tau : \mathbb{R}^+ \to \mathbb{R}$ is only infinitely differentiable; it is analytic nowhere. This is because; although the linear approximation converges really fast and really well on $\mathbb{R}^+$; at each point $s \in \mathbb{R}^+$, there is no neighborhood about it which converges; we don't have convergence that isn't trivial under a Lebesgue area measure.

And thus; the weak Fatou set, when $\mu = 1$ and $\lambda =1$, explicitly excludes the real-line, because it is not open in $\mathbb{C}$; despite displaying god awful fast convergence. 

Additionally, calculating $\tau$ is expressly very difficult. The value $\beta(5)$ is already an over flow; and $\beta(10+i)$ is an overflow just as well. This is, quite because $\beta(s+k)$ grows to $\infty$ rather fast and sporadically; but at least it does so regularly.  For that reason it's very difficult to get a good picture of the weak Fatou set $\mathcal{P}$ through computations; but it is the empty set.

\begin{figure}
    \centering
    \includegraphics[scale=0.3]{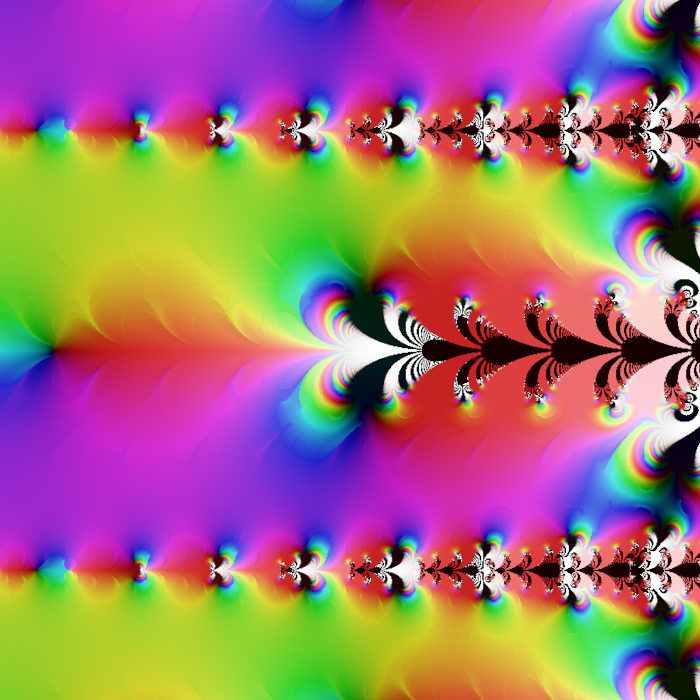}
    \caption{A rough approximation of $\beta(s) + \tau(s)$ for: $\mu =1$ and $\lambda = 1$. Expect the teeth and branch cuts which appear in this graph to increase indefinitely as we get closer and closer approximations. Despite looking holomorphic; it is in fact not holomorphic; we are reduced to an asymptotic series.}
    \label{fig:E_1}
\end{figure}

In Figure \ref{fig:E_1} you can see logarithmic singularities and their branch cuts appearing; and you can see them appearing near the real-line.  More will appear under further iterations; but they will never be dense. But you can still expect the weak Julia set $\mathcal{B} = \mathbb{C}$ to be the complex plane.

In this instance we can expect:

$$
\lim_{k\to\infty} \frac{1}{\beta(s+k)} = 0\\
$$

For all $s \neq j + (2k+1)\pi i$. But:

$$
\limsup_{k\to\infty} \left|\left| \frac{1}{\beta(s+k)}\right|\right|_{\mathcal{N}} = \infty\\
$$

Where $\mathcal{N}$ is a compact neighborhood about $s$ in $\mathbb{C}$. So, although:

$$
\limsup_{k\to\infty} \left|\left| \frac{1}{\beta(s+k)}\right|\right|_{\mathbb{R}^+} \to 0\\
$$

This expression does not do so uniformly in $\mathbb{C}$. The moment $s$ moves from $\mathbb{R}^+$, we experience a chaos and divergence. Which forbids a normal convergence. Hence, implying that the real line is not part of the weak Fatou set; because the weak Fatou set is open by definition.

So, when $\mu = 1$ and $\lambda =1$ we can expect that $\tau$ converges; but whether it converges uniformly on compact disks in $\mathbb{C}$ is a whole other problem; and it doesn't converge. On the real line it will be infinitely differentiable and converge fast; but it will be nowhere analytic because these bounds cannot be made in a neighborhood of the real-line.

We'll see later that the weak Fatou set is trivial and does not exist. But describing this result is very difficult. Nonetheless Figure \ref{fig:E_1} gives us a good picture of what the inverse Abel function will look like; just expect more sickle like branch-cuts; and at best, you can only make an asymptotic series at each point. This graph was done with a 300 term polynomial. Despite not being holomorphic, the asymptotic series is rather accurate.

This allows us to let $\lambda$ be as close to $0$ as possible. Where we can now take $\Re \lambda$ as close to zero as possible. In Figure \ref{fig:E_25} is a graph when $\mu =1$ and $\lambda = 0.25$. There are an aggregate of singularities and branch cuts which arise for this case; and if you could take further iterations there would be more. Meaning, again, that the weak Julia set is all of $\mathbb{C}$.

\begin{figure}
    \centering
    \includegraphics[scale=0.3]{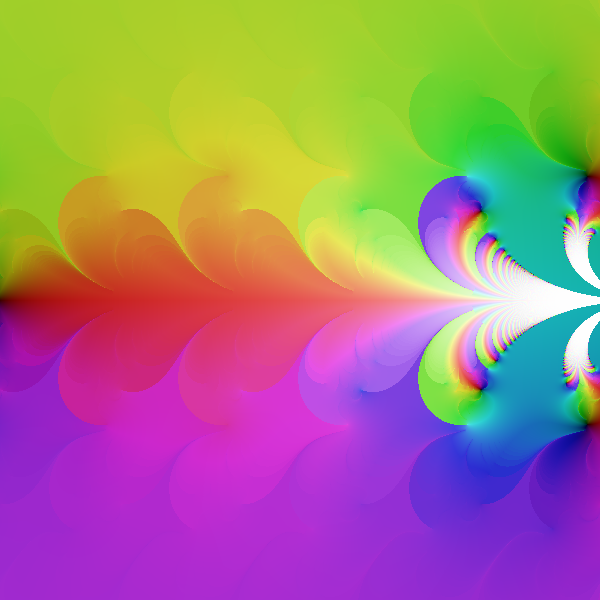}
    \caption{An approximation of the inverse Abel function when $\mu = 1$ and $\lambda = 0.25$.}
    \label{fig:E_25}
\end{figure}

Determining this limit superior converges to infinity everywhere compactly is a much more difficult thing though. It's rather non-trivial to show there is a domain where it does converge to infinity compactly.\\

We can also introduce more anomalous constructions. Taking $\mu = 1+i$ and $\lambda = 1+i$ we get a different looking beast. In concurrence $e^{1+i}$ does not belong to the Shell-Thron region (depicted in Figure \ref{fig:Sh-Th}) there similarly exists logarithmic branch-cuts which appear in $\tau$; as with the previous case.

This means, as before; like with $e$ and $\lambda=1$; we can expect the weak Fatou set to, for all intents and purposes be, erratic and misbehaved. Nonetheless; in Figure \ref{fig:tet_1_I_1_I}, we can see a very weird looking construction. And although there are a plethora of singularities, they are sparse. And they contribute a measure zero effect on the translation invariant domain $\mathcal{P}$. Where $\mathcal{P}$ is where the linearization is well behaved.

\begin{figure}
    \centering
    \includegraphics[scale=0.5]{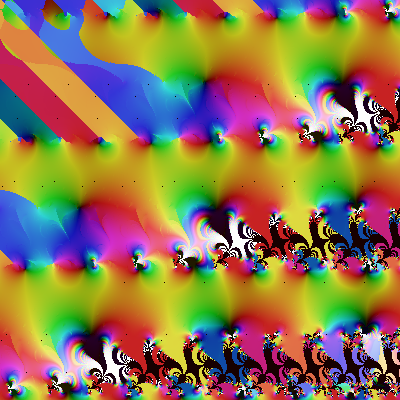}
    \caption{The function $\beta(s) + \tau(s)$ for $\mu = 1+i$ and $\lambda = 1+i$.}
    \label{fig:tet_1_I_1_I}
\end{figure}

The teeth, and the non-analytic points will appear more often as you increase the iterations; but they will never be dense. And again, despite the weak Fatou set being oddly shaped; and full of cuts; it is translation invariant and can induce a tetration.

We'd also like to talk about when $\lambda =1$ rather than $\lambda = 1+i$. We'll keep $\mu = 1+i$ and only vary $\lambda$. This produces the very different graph in Figure \ref{fig:tet_1_I_1}; but satisfies the same inverse Abel equation. The biggest difference is that the period is now $2\pi i$ as opposed to $\dfrac{2 \pi i}{1+i}$. This moves the logarithmic singularities substantially. You'll get for $0 < \Im(s) < \pi$ a very regular structure; and the chaos happens somewhere for $0 > \Im(s) > -\pi$. Which is largely because this is where the linearization is well behaved, versus where the linearization is chaotic.\\

\begin{figure}
    \centering
    \includegraphics[scale=0.5]{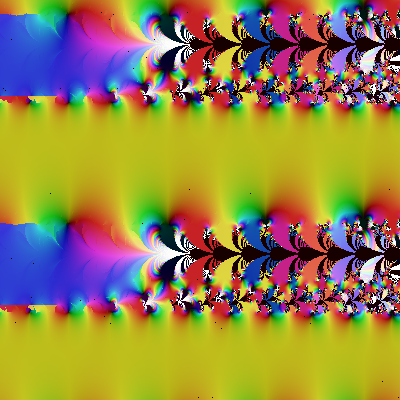}
    \caption{The inverse Abel function for $\mu = 1+i$ and $\lambda = 1$.}
    \label{fig:tet_1_I_1}
\end{figure}

And lastly we'll look at when $\lambda = 1$ and $\mu = 0.3 + i$; which lies within the Shell-Thron region in Figure\ref{fig:Sh-Th}. In Figure \ref{fig:tet_pt3_1_I} we get a good look at this function. You can see a couple of branch cuts; and some good chaos. But for the most part we have a well behaved structure.\\

\begin{figure}
    \centering
    \includegraphics[scale=0.4]{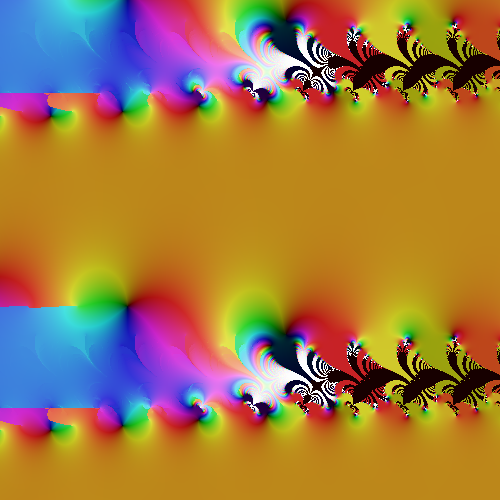}
    \caption{The inverse Abel function for $\mu =0.3 +i$ and $\lambda=1$.}
    \label{fig:tet_pt3_1_I}
\end{figure}

\section{Characterizing the weak Julia set}\label{sec11}
\setcounter{equation}{0}

In this section we will characterize the weak Julia set, as we've characterized the weak Fatou set. We will do so in a similar manner as the Julia set is characterized independently from the Fatou set; but, through weak normality.

\begin{definition}[The weak Julia set]\label{defWJUL}
The weak Julia set $\mathcal{B} \subset \mathbb{C}$ of $\beta$ is a closed domain, in which for all $s \in \mathcal{B}$, for all neighborhoods $\mathcal{N} = \{y \in \mathbb{C}\,|\,|y-s| < \delta\}$ of $s$; the value:

$$
\limsup_{k\to\infty} \left|\left|\frac{1}{\beta(s+k)}\right|\right|_{\mathcal{N}} = \infty\\
$$

\end{definition}

So, for example when $\mu =1$ and $\lambda =1$; we can expect the real line $\mathbb{R}\subset \mathcal{B}$. We'd like to spend some time describing where to expect the weak Julia set to exist. And describe some of its properties.\\

To understand the weak Julia set; it's important to split the plane into two separate limits. When you take $e^{\mu z}$; the plane can be split into two domains:

\begin{align*}
    \mathcal{L}^+ &= \{ z \in \mathbb{C}\,|\, |\arg(\mu z)| < \pi/2\}\\
    \mathcal{L}^- &= \{ z \in \mathbb{C}\,|\, \pi/2 < |\arg(\mu z)| \le \pi\}\\
\end{align*}

Which $e^{\mu z}$ is holomorphic on each. But, we can attach infinity to each of these domains, and a very different picture starts to appear.

\begin{align*}
    \mathcal{L}_\infty^+ &= \mathcal{L}^+ \cup \infty\\
    \mathcal{L}_\infty^- &= \mathcal{L}^- \cup \infty\\
\end{align*}

And we arrive at two separate holomorphic functions on the Riemann sphere:

\begin{align*}
    e^{\mu z} : \mathcal{L}_\infty^{+} & \to \widehat{\mathbb{C}}\\
    e^{\mu z} : \mathcal{L}_\infty^{-} & \to \widehat{\mathbb{C}}\\
\end{align*}

In which we have two very different limits:

\begin{align*}
    e^{\mu \infty} &= \infty\,\,\text{as}\,\,|z|\to\infty\,\,\text{while}\,\,z \in \mathcal{L}^+\\
    e^{\mu \infty} &= 0\,\,\text{as}\,\,|z|\to\infty\,\,\text{while}\,\,z \in \mathcal{L}^-
\end{align*}

To find where the weak Julia set is; we are looking for values in the forward orbits, and the backwards orbits, of these exact points on the Riemann sphere. We're looking for where $\beta(s+n) \approx 0$ and where $\beta(s+n) \approx \infty$. Where we can expect, in the second case; for a significant domain $\beta(s+n+1) \approx 0$. 

This can be clarified by some basic topology, on the Riemann sphere $\widehat{\mathbb{C}}$, the function $e^{\mu z}$ has a fifty fifty chance of being $\approx\infty$ or $\approx 0$ in a neighbourhood of $z \approx \infty$. These are where the weak Julia sets potentially arise. And in that spirit; the weak Julia set appears where many large values appear, because large values cause tiny values. But where tiny values are; they do not cause large values. But large values cause neighborhoods of large values and tiny values; and this causes the weak normality to fail.

These chaotic neighborhoods are precisely where we are too close to $\infty$ on the Riemann sphere. When $\beta(s) \approx \infty$ without a sufficient calmness to it; the orbits $\beta(s+n)$ cluster values of $\infty$ and $0$ together, causing the orbits $\frac{1}{\beta(s+n)}$ to cluster too large at $\infty$ to allow us to take a supremum norm.

And this is precisely what happens in a neighborhood of the real line when $\mu =1$ and $\lambda = 1$ which results in $\mathbb{R} \subset \mathcal{B}$. As we increase $x$ on $\mathbb{R}$ we grow super-exponentially to $\infty$; and so a neighbhorhood of $x$ must be equally large; but then the next orbit; once adding in an imaginary component; can cause $\beta(x+1 + i\delta) \approx 1,0$

\section{The Asymptotic theorem of the second kind}\label{sec12}
\setcounter{equation}{0}

The point of this section is to handle a result existant in the weak Fatou set and the Weak Julia set; and their counter parts the Fatou set and Julia set. We will start with a staple of iterated exponentials, which is that $e^z$'s Julia set is all of $\mathbb{C}$. So the Fatou set is empty.

Similarly, taking $\mu = 1$ and $\lambda=1$, the weak Fatou set is empty. This is the most noticeable similarity between the weak Fatou set and the Fatou set. There is a weakening in the criterion, but doesn't give us more values to play with. 

Therefore; the distinction between the Fatou/Julia set isn't the give all end all of tetration. Some results lay out universally. The theorem to follow is an example of this.

We want to use the following idea by induction. Let $\rho^n$ be as they were.

\begin{align*}
F_n(s+1+k) &= \exp\left( F_{n+1}(s+k)\right)\\
F_{n-1}(s+1 + k) + \rho^n(s+1+k) &= \exp\left( F_{n+1}(s+k)\right)\\
\rho^n(s+1+k) &= \exp\left( F_{n}(s+k) + \rho^{n+1}(s+k)\right)-F_{n-1}(s+1 + k)\\
&\to 0\,\,\text{as}\,\,k\to\infty\\
\end{align*}

Which from here, we want to show that $\rho^{n+1}(s+k) \to 0$; which is obvious if you think about it, because:

$$
F_{n-1}(s+1+k)\left(e^{\rho^{n+1}(s+k)} - 1\right) =  \exp\left( F_{n}(s+k) + \rho^{n+1}(s+k)\right)-F_{n-1}(s+1 + k)\\
$$

And this expression tends to zero uniformly as $k\to\infty$. Since $F_{n-1}$ doesn't tend to zero uniformly; we must have:

$$
\left(e^{\rho^{n+1}(s+k)} - 1\right) \to 0\,\,\text{as}\,\,k\to\infty\\
$$

Meaning:

$$
\rho^{n+1}(s+k) \to 0\,\,\text{as}\,\,k\to\infty\\
$$

Which concludes the induction step. This allows us a normality result for both the weak Julia set, and the weak Fatou set. 

\begin{theorem}[The Asymptotic Theorem of the second kind.]\label{thmASYM2}
For all $j \in \mathbb{N}$, and all $(s,\lambda) \in \mathbb{L}$ the functions $\rho^j$ satisfy:

$$
||\rho^j(s+k)||_{s\in\mathcal{N}} \to 0\,\,\text{as}\,\,k \to \infty\\
$$

For $\mathcal{N}$ a compact neighborhood of $s$. Similarly, for all $n \in \mathbb{N}$ the functions $\tau^n$ satisfy:

$$
||\tau^{n}(s+k)||_{s\in\mathcal{N}} \to 0 \,\,\text{as}\,\,k\to\infty\\
$$
\end{theorem}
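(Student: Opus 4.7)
The plan is to induct on $j$ (with the $\tau^n$ claim following by linearity at the end). For the base case $j=1$, the explicit formula $\rho^1(s+k) = -\log(1+e^{-\lambda(s+k)})$ suffices: since $(s,\lambda)\in\mathbb{L}$ forces $\Re\lambda > 0$, as $k\to\infty$ the quantity $-\lambda(s+k)$ has real part tending to $-\infty$ uniformly for $s$ in any compact neighborhood $\mathcal{N}$, so $e^{-\lambda(s+k)}\to 0$ uniformly on $\mathcal{N}$ and the $\log$ does too.

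For the inductive step I follow the telescoping identity sketched just before the theorem. Assume $\rho^m(s+k)\to 0$ uniformly on $\mathcal{N}$ for all $m\le n$. Starting from the exact equation $F_n(s+1+k) = \exp(F_{n+1}(s+k))$, I decompose $F_n = F_{n-1}+\rho^n$ on the left and $F_{n+1} = F_n+\rho^{n+1}$ on the right. Using the multiplicative property $\exp(a+b) = \exp(a)\exp(b)$ (valid since $\exp(z)=e^{\mu z}$) together with $\exp(F_n(s+k)) = F_{n-1}(s+1+k)$, the equation collapses to
$$F_{n-1}(s+1+k)\bigl(e^{\mu\rho^{n+1}(s+k)} - 1\bigr) = \rho^n(s+1+k).$$
By the inductive hypothesis the right-hand side tends to $0$ uniformly on $\mathcal{N}$.

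The main obstacle is the division step: to conclude $\rho^{n+1}(s+k)\to 0$, one must ensure $|F_{n-1}(s+1+k)|$ is bounded below on $\mathcal{N}$ for $k$ sufficiently large. Since $F_{n-1} = \beta + \sum_{j<n}\rho^j$ and the $\rho^j$ are already asymptotically negligible by the inductive hypothesis, this reduces to a statement about $\beta(s+1+k)$ itself. For $k$ large, $s+1+k$ sits deep in the half-plane $|\arg(\lambda(s+1+k))| < \pi/2$, where the Asymptotic Theorem of the first kind gives $\beta(s+2+k) \approx \exp(\mu\beta(s+1+k))$, preventing $\beta$ from vanishing persistently; more directly, $\beta$ is defined by an infinite composition whose individual factors $e^{\mu z}/(1+e^{\lambda(j-s)})$ never vanish, so the only way $\beta(s+1+k)$ could be small is in exceptional spots that thin out as $k$ grows. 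Once the bound $|F_{n-1}(s+1+k)| \ge c > 0$ on $\mathcal{N}$ for large $k$ is secured, one divides to obtain $e^{\mu\rho^{n+1}(s+k)}-1 \to 0$ uniformly on $\mathcal{N}$, and taking $\log$ yields $\rho^{n+1}(s+k)\to 0$ uniformly.

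The statement for $\tau^n$ then follows immediately from the relation $\tau^n = \sum_{j=1}^n \rho^j$ established earlier: a finite sum of functions each tending to zero uniformly on $\mathcal{N}$ tends to zero uniformly on $\mathcal{N}$. I expect the division step to be the truly delicate part; controlling $|\beta(s+1+k)|$ from below uniformly is exactly the role played by the weak Fatou set, and outside it one can at best expect the conclusion to hold pointwise or off an exceptional set of isolated points where $\beta$'s orbit lands near a zero.
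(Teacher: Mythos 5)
Your proof follows essentially the same route as the paper's: induction on $j$ with base case from the explicit form of $\rho^1$, then the exact telescoping identity
$$
\rho^n(s+k+1) \;=\; F_{n-1}(s+k+1)\bigl(\exp\rho^{n+1}(s+k)-1\bigr),
$$
from which the paper divides to conclude $\exp\rho^{n+1}(s+k)-1\to 0$ compactly, and finishes $\tau^n$ by finite linearity. So on approach you match exactly.

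Where you add value is in your scrutiny of the division step. The paper dispatches it with the one-liner ``Since $F_{n-1}(s+k+1)\not\to 0$ compactly normally, we must have $(\exp\rho^{n+1}(s+k)-1)\to 0$.'' But as you correctly observe, ``$\not\to 0$ compactly normally'' is a statement about the supremum norm not vanishing, whereas what is needed to divide is a lower bound on $\inf_{s\in\mathcal{N}}|F_{n-1}(s+k+1)|$ which does not decay faster than $\rho^n(s+k+1)$ does. These are genuinely different; in particular on the weak Julia set the infimum of $|\beta(s+k)|$ over $\mathcal{N}$ does get arbitrarily small along subsequences (that is exactly what makes it the weak Julia set), so the compactly-uniform conclusion is not automatic from the paper's stated premise. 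Your hedge at the end --- that off the weak Fatou set the conclusion may only hold pointwise or off an exceptional set --- is candid but in tension with the theorem as stated, which claims uniformity on $\mathcal{N}$ for all $(s,\lambda)\in\mathbb{L}$, and in tension with the paper's follow-up remark that this asymptotic ``holds regardless of the weak Fatou/Julia distinction.'' In short: you reproduce the paper's argument and correctly flag its soft spot, but neither you nor the paper actually close the gap of producing a uniform lower bound on $|F_{n-1}(s+k+1)|$ over $\mathcal{N}$ compatible with the decay rate of $\rho^n$. If you want a complete proof you would need to supply that estimate (or restrict the statement), not merely name where it is needed.
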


\begin{proof}
We go by induction on $\rho^j$. To begin $\rho^0 = 0$ and $\rho^1 = -\log(1+e^{-\lambda s})$ which both satisfy the asymptotic. Assume for all $0 \le j \le n$ that $\rho^j$ satisfies the asymptotic condition. Let:

$$
F_n(s) = \beta(s) + \sum_{j=1}^n \rho^j(s)\\
$$

Now observe that:

\begin{align*}
F_{n-1}(s+k+1) + \rho^n(s+k+1) &= \exp F_{n+1}(s+k)\\
&= \exp\left(F_{n}(s+k) + \rho^{n+1}(s+k)\right)\\
\rho^n(s+k+1) &= \exp F_{n}(s+k) \exp \rho^{n+1}(s+k) - F_{n-1}(s+k+1)\\
&= F_{n-1}(s+k+1) \left(\exp \rho^{n+1}(s+k)-1\right)\\
\end{align*}

By the induction hypothesis:

$$
\rho^n(s+k+1) \to 0\,\,\text{as}\,\,k\to\infty\\
$$

But furthermore, tends to zero compactly normally for $s \in \mathcal{N}$, a compact neighborhood about $s$. Since $F_{n-1}(s+k+1) \not\to 0$ compactly normally, we must have:

$$
\left(\exp \rho^{n+1}(s+k)-1\right) \to 0\,\,\text{as}\,\,k\to\infty\\
$$

And does so compactly normally; hence the first part of the theorem. Since:

$$
\tau^n(s+k) = \sum_{j=1}^n \rho^j(s+k)\\
$$

We have the second statement of the theorem as well.
\end{proof}

So regardless of the weak Fatou set and the weak Julia set distinction; we still have an asymptotic theorem. It is where we ask how this asymptotic works that we enter a more difficult discussion. And is where we need the weak Fatou set, and the weak Julia set, to make this distinction.

The weak Fatou set is precisely where:

$$
\rho^j(s) = \mathcal{O}(r^j)\\
$$

For some $0 < r < 1$; of which this norm is done compactly; which causes a geometric convergence, which causes a holomorphic tetration. So understanding where this asymptotic is geometric compactly is precisely where we get a holomorphic tetration.

\chapter{An inverse Abel function on the weak Fatou set}

\section{An inverse Abel function on the weak Fatou set}\label{sec13}
\setcounter{equation}{0}

We are going to begin by sketching a construction for the inverse Abel function; and then putting that argument in action. We can begin by recalling our important constant:

$$
A_\mu = \limsup_{k\to\infty} \left|\left|\frac{1}{\mu \beta(s+k)}\right|\right|_{\mathcal{N}}
$$

Which converges for $\mathcal{N} \subset \mathcal{P}$ a compact neighborhood in the weak Fatou set. We will restrict: $\Re \lambda > \log A_\mu$. And we are going to look at the asymptotics of $\tau^n$. By example:

$$
\tau^{n+1}(s+k) = \log\left(1+\frac{\tau^n(s+k+1)}{\beta(s+k+1)}\right) - \log(1+e^{-\lambda (s+k)})
$$

If we let $k > K$ be large enough; we can set:

$$
\left|\left|\log\left(1+\frac{\tau^n(s+k+1)}{\beta(s+k+1)}\right)\right|\right|_{\mathcal{N}} \le C\left|\left|\frac{\tau^n(s+k+1)}{\mu\beta(s+k+1)}\right|\right|_{\mathcal{N}}\\
$$

For $C \approx 1$; which tends to $1$ as we let $K \to \infty$. Additionally; $\tau^n(s+k)$ tends to zero as $k\to\infty$. And we arrive at the rough estimate:

$$
\left|\left|\log\left(1+\frac{\tau^n(s+k+1)}{\beta(s+k+1)}\right)\right|\right|_{\mathcal{N}} \le C (A_\mu + \delta)\left|\left|\tau^n(s+k+1)\right|\right|_{\mathcal{N}}\\
$$

For a $\delta > 0$. Where $C (A_\mu + \delta) = A_\mu + \delta'$ for $\delta' \to 0$ as $K \to \infty$. So we can arrive at the bound:

$$
||\tau^{n+1}(s+k)||_{\mathcal{N}} \le (A_\mu+\delta')\left|\left|\tau^n(s+k+1)\right|\right|_{\mathcal{N}} +(1+\delta')||e^{-\lambda s}||_{\mathcal{N}}\frac{e^{-\lambda k}}{\mu}\\\\
$$

And now this is precisely the linear approximation. And consequently, we get a result the author has hinted at repeatedly since the beginning of this paper.

\begin{theorem}[The Normal Theorem]
For $\mathcal{N} \subset \mathcal{P}$ a compact neighborhood in the weak Fatou set; and $\Re \lambda > \log A_\mu$; there exists $K$ such for $k>K$ and $n \in \mathbb{N}$, the sequence of functions:

$$
||\tau^{n}(s+k)||_{\mathcal{N}} \le M\\
$$

For a constant $M$ depending on $\mathcal{N}$ and $K$.
\end{theorem}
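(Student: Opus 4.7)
My plan is to iterate the recursive relation for $\tau^{n+1}$ against a geometric weight and extract a bound that is uniform in $n$ via a simple fixed-point estimate. Using the convention $\log = \ln/\mu$, the Taylor expansion of $\log(1+y)$ near $0$ gives $|\log(1+y)| \le C_0\,|y|/|\mu|$ with $C_0$ arbitrarily close to $1$ when $|y|$ is confined to a small disk about zero. Choosing $K$ large enough that $\|1/(\mu\beta(s+k+1))\|_{\mathcal{N}} \le A_\mu + \epsilon$ for every $k > K$, and assuming for the moment that $\tau^n(s+k+1)/\beta(s+k+1)$ stays inside that disk, I obtain the linear majorant
\[
\|\tau^{n+1}(s+k)\|_{\mathcal{N}} \;\le\; C_0(A_\mu + \epsilon)\|\tau^n(s+k+1)\|_{\mathcal{N}} \;+\; C_0\, e^{-\Re\lambda\, k}\|e^{-\lambda s}\|_{\mathcal{N}}/|\mu|.
\]

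Next I introduce the weighted quantity $N_n := \sup_{k > K} e^{\Re\lambda\, k}\|\tau^n(s+k)\|_{\mathcal{N}}$. Multiplying the inequality by $e^{\Re\lambda\,k}$ and taking the supremum over $k > K$ produces a scalar recursion $N_{n+1} \le r\,N_n + v$, where $r := C_0(A_\mu + \epsilon)\,e^{-\Re\lambda}$ and $v$ is a constant depending only on $\mathcal{N},\lambda,\mu$. The hypothesis $\Re\lambda > \log A_\mu$ is exactly what lets me pick $\epsilon$ small and $C_0$ close to $1$ so that $r<1$. Since $\tau^0 \equiv 0$ gives $N_0 = 0$, iterating the recursion yields $N_n \le v/(1-r) =: R$ for every $n$. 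Unfolding the weight gives $\|\tau^n(s+k)\|_{\mathcal{N}} \le R\,e^{-\Re\lambda\,k}$, which is bounded by $M := R\,e^{-\Re\lambda\,K}$ for $k > K$ --- a slightly stronger statement than what the theorem asks.

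The main obstacle is the circular feature of the first inequality: the linear majorant is only legitimate while $|\tau^n(s+k+1)/\beta(s+k+1)|$ lies in the Taylor regime of $\log$, which is precisely the sort of control I am trying to establish. I close the loop by inducting on $n$. If $N_j \le R$ for all $j \le n$, then
\[
\left|\frac{\tau^n(s+k+1)}{\beta(s+k+1)}\right| \;\le\; R\,|\mu|(A_\mu + \epsilon)\,e^{-\Re\lambda(k+1)},
\]
and because $R$ depends only on $\mathcal{N},\lambda,\mu$ and not on $n$, I can enlarge $K$ at the outset so that the right-hand side is smaller than any prescribed threshold. The same $K$ therefore works at every stage, and the linear estimate is justified at step $n+1$, completing the induction.

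The Asymptotic Theorem of the second kind plays a consistency role here: it already tells me, for each fixed $n$, that $\tau^n(s+k) \to 0$ as $k\to\infty$ uniformly on $\mathcal{N}$, which confirms that the weight $e^{\Re\lambda\,k}$ is not artificially strong and that the quantities $N_n$ are finite to begin with. The \emph{uniformity} in $n$, however, is extracted entirely from the linearized contraction described above, with the weak Fatou hypothesis entering solely through the finiteness of $A_\mu$ on $\mathcal{N}$.
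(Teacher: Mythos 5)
Your proof is correct and follows essentially the same mechanism as the paper's: linearize the $\log$ recursion against the constant $A_\mu$ (controlled by the weak Fatou hypothesis), then observe that the ratio $r \approx (A_\mu+\epsilon)e^{-\Re\lambda}$ is less than $1$ precisely when $\Re\lambda > \log A_\mu$, and sum a geometric series. The paper does this by directly unrolling the recursion $\|\tau^{n+1}(s+k)\| \le (A_\mu+\delta')\|\tau^n(s+k+1)\| + (1+\delta')\|e^{-\lambda(s+k)}\|/\mu$ into $\|\tau^{n+1}(s+k)\| \le (1+\delta')\|e^{-\lambda(s+k)}\|\sum_{j=0}^n r^j$ and setting $M = (1+\delta')\|e^{-\lambda(s+K)}\|/(1-r)$; your introduction of the weighted supremum $N_n = \sup_{k>K}e^{\Re\lambda k}\|\tau^n(s+k)\|_{\mathcal{N}}$ and the scalar recursion $N_{n+1}\le rN_n+v$ with $N_0=0$ packages exactly the same geometric sum, and your final bound $\|\tau^n(s+k)\|_{\mathcal{N}} \le Re^{-\Re\lambda k}$ recovers the Corollary the paper states immediately afterward.

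The genuine improvement in your write-up is the explicit induction closing the ``circularity'' gap. The paper's proof simply begins ``Starting from the bound \dots'' and treats the linear majorant as already established by the informal discussion preceding the theorem, without confronting the fact that the Taylor estimate $|\log(1+y)|\le C_0|y|/|\mu|$ is only valid when $y=\tau^n/\beta$ stays small --- a smallness that is exactly what the theorem is supposed to deliver. Your observation that the fixed point $R = v/(1-r)$ does not depend on $n$, so that a single choice of $K$ keeps $|\tau^n(s+k+1)/\beta(s+k+1)|$ inside the linearization regime uniformly in $n$, is the right way to break the apparent circle, and it is a detail the paper leaves implicit. The only caution is that $r$ (through $\epsilon$ and $C_0$) is itself fixed by a preliminary choice of $K$, so the order of quantifiers matters: one first picks $\epsilon$, $C_0$, hence $r<1$ and $R$, and only then enlarges $K$; you state this correctly, but it is the one place a careless reading could go astray.
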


\begin{proof}
Starting from the bound:

$$
||\tau^{n+1}(s+k)||_{\mathcal{N}} \le (A_\mu+\delta')\left|\left|\tau^n(s+k+1)\right|\right|_{\mathcal{N}} + (1+\delta')\frac{||e^{-\lambda (s+k)}||_{\mathcal{N}}}{\mu}\\
$$

We arrive at the expression:

$$
||\tau^{n+1}(s+k)||_{\mathcal{N}} \le (1+\delta')||e^{-\lambda(s+k)}||_{\mathcal{N}}\sum_{j=0}^n (A_\mu+\delta')^j e^{-\Re\lambda j}\\
$$

For large enough $K$, we have $\delta'$ arbitrarily small; so when $\Re \lambda > \log(A_\mu+\delta')$; and setting $r = e^{-(\lambda - \log (A_\mu+\delta'))}$; we're given the bound:

$$
||\tau^{n+1}(s+k)||_{\mathcal{N}} \le (1+\delta') ||e^{-\lambda(s+k)}||_{\mathcal{N}} \sum_{j=0}^nr^j\\
$$

And consequently; setting 

$$
M = \frac{(1+\delta')}{1-r}||e^{-\lambda(s+K)}||_{\mathcal{N}}
$$

We arrive at the theorem's statement.
\end{proof}

There is a stronger sentiment in this statement too; which is that for $k>K$ we can expect the slightly stronger corollary:

\begin{corollary}
For $\mathcal{N} \subset \mathcal{P}$ a compact neighborhood in the weak Fatou set; and $\Re \lambda > \log A_\mu$; there exists $K$ such for $k>K$ and $n \in \mathbb{N}$, the sequence of functions:

$$
||\tau^{n}(s+k)||_{\mathcal{N}} \le C e^{-\Re\lambda k}\\
$$

For some constant $C \in \mathbb{R}^+$.
\end{corollary}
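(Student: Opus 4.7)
The plan is to observe that the exponential decay in $k$ is already present inside the estimate derived in the proof of The Normal Theorem, and it is merely hidden in the shorthand $||e^{-\lambda(s+k)}||_{\mathcal{N}}$. The corollary is therefore a matter of refactoring that bound rather than running any new argument.

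First I would recall the key inequality the preceding proof actually establishes, before the final step of replacing $k$ by $K$:
\[
||\tau^{n+1}(s+k)||_{\mathcal{N}} \le (1+\delta') \, ||e^{-\lambda(s+k)}||_{\mathcal{N}} \sum_{j=0}^{n} r^j,
\]
valid for all $k > K$, where $r = e^{-(\lambda - \log(A_\mu + \delta'))}$ satisfies $0 < r < 1$ because of the hypothesis $\Re\lambda > \log A_\mu$ and the freedom to make $\delta'$ as small as we like by enlarging $K$.

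The essential observation is that the sup on $\mathcal{N}$ factorises: since $\mathcal{N}$ is a fixed compact set independent of $k$, and the shift $s \mapsto s+k$ only multiplies the exponential by a constant,
\[
||e^{-\lambda(s+k)}||_{\mathcal{N}} = e^{-\Re\lambda\,k}\,||e^{-\lambda s}||_{\mathcal{N}}.
\]
Combining this with the geometric sum $\sum_{j=0}^n r^j \le \frac{1}{1-r}$ gives
\[
||\tau^{n+1}(s+k)||_{\mathcal{N}} \le \frac{(1+\delta')\,||e^{-\lambda s}||_{\mathcal{N}}}{1-r}\, e^{-\Re\lambda\,k},
\]
and setting $C = (1+\delta')\,||e^{-\lambda s}||_{\mathcal{N}}/(1-r)$ yields the stated estimate for every $n \in \mathbb{N}$ and $k > K$.

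There is no real obstacle here; the only subtlety is conceptual, namely recognising that the Theorem's bound $M$ is an over-estimate obtained by discarding the $k$-dependence of $||e^{-\lambda(s+k)}||_{\mathcal{N}}$. Keeping that factor visible immediately upgrades uniform boundedness in $n$ to exponential decay in $k$, uniformly in $n$. One small bookkeeping point worth stating in the write-up is that $C$ depends on $\mathcal{N}$ (through $||e^{-\lambda s}||_{\mathcal{N}}$) and on the choice of $K$ (through $\delta'$ and $r$), but not on $n$ or $k$, which is exactly what one needs to feed this estimate into the geometric convergence arguments that culminate in the holomorphic inverse Abel function on the weak Fatou set.
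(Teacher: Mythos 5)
Your proposal is correct and takes essentially the same approach the paper intends: the paper's proof of The Normal Theorem already reaches the inequality $||\tau^{n+1}(s+k)||_{\mathcal{N}} \le (1+\delta')\,||e^{-\lambda(s+k)}||_{\mathcal{N}}\sum_{j=0}^n r^j$ before coarsening $k$ to $K$ in the final constant $M$, and your observation that $||e^{-\lambda(s+k)}||_{\mathcal{N}} = e^{-\Re\lambda\,k}\,||e^{-\lambda s}||_{\mathcal{N}}$ (valid since $k$ is a positive integer, so $|e^{-\lambda k}| = e^{-k\Re\lambda}$) is exactly the refactoring that upgrades the $M$-bound to the claimed $Ce^{-\Re\lambda k}$ decay.
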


It is at this point, where we must use The Banach Fixed Point Theorem. This can be achieved with the simple identity:

$$
|\log(1+x) - \log(1+x')| \le \frac{1+\delta'}{\mu}|x-x'|\\
$$

for $\delta' \to 0$ as $|x|,|x|' \to 0$. So in conjunction if we take:

\begin{align*}
|\tau^{n+1}(s+k) - \tau^{n}(s+k)| &\le \left|\log\left(1+\frac{\tau^{n}(s+k+1)}{\beta(s+k+1)}\right) - \log\left(1+\frac{\tau^{n-1}(s+k+1)}{\beta(s+k+1)}\right)\right|\\
&= \frac{1+\delta'}{\mu|\beta(s+k+1)|}\left| \tau^n(s+k+1) - \tau^{n-1}(s+k+1)\right|\\
&\vdots\\
&= \dfrac{(1+\delta')^n}{\mu^n \prod_{j=1}^n |\beta(s+k+j)|}|\log(1+e^{-\lambda (s+k+n)})|\\
\end{align*}

Now, if we choose our compact neighborhoods correctly; we can arrive at our theorem; which for $r = e^{\log (A_\mu + \delta'') - \lambda}$, where $\Re \lambda > \log (A_\mu + \delta'')$--we arrive at:

$$
||\tau^{n+1}(s+k) - \tau^{n}(s+k)||_{\mathcal{N}} \le Cr^n\\
$$

For some $C \in \mathbb{R}^+$. Which implies that:

$$
||\rho^{n+1}(s+k)||_{\mathcal{N}} \le Cr^n\\
$$

And that:

$$
\sum_{j=1}^\infty ||\rho^j(s+k)||_{\mathcal{N}} \le \frac{C}{1-r}\\
$$

Which implies the inverse Abel function will be holomorphic on $\mathcal{N}$ for large enough $k>K$.

\begin{theorem}[The Convergence Theorem]\label{thmABLCVG}
For $\mathcal{N} \subset \mathcal{P}$ a compact neighborhood in the weak Fatou set; and $\Re \lambda > \log A_\mu$; there exists $K$ such for $k>K$:

$$
\lim_{n\to\infty} \tau^n(s+k) = \tau(s+k)\\
$$

Converges uniformly for $s \in \mathcal{N}$.
\end{theorem}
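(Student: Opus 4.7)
The plan is to turn the Banach-fixed-point style estimate sketched in the paragraphs just before the statement into a clean Cauchy argument in the supremum norm on $\mathcal{N}$. The two main ingredients are already available: the Corollary to The Normal Theorem gives $\|\tau^n(s+k)\|_\mathcal{N} \le C e^{-\Re\lambda k}$ uniformly in $n$ once $k>K$, and the weak Fatou condition provides the multiplicative bound $\|(\mu\beta(s+k+j))^{-1}\|_\mathcal{N} \le A_\mu + \delta''$ for all $k+j > K$ after choosing $K$ sufficiently large.

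First I would record the Lipschitz estimate for the logarithm in the form
$$
\bigl|\log(1+x) - \log(1+x')\bigr| \le \tfrac{1+\delta'}{\mu}|x-x'|
$$
valid once $|x|,|x'|$ are small enough, and note that by the Corollary this smallness holds uniformly on $\mathcal{N}$ once $k$ is large (the denominator $\beta(s+k+1)$ remains bounded away from zero on the weak Fatou set). Subtracting two consecutive instances of the recursion \eqref{eq:TAU} and applying this Lipschitz bound yields
$$
\bigl|\tau^{n+1}(s+k) - \tau^{n}(s+k)\bigr| \le \tfrac{1+\delta'}{\mu|\beta(s+k+1)|}\bigl|\tau^{n}(s+k+1) - \tau^{n-1}(s+k+1)\bigr|.
$$
Iterating this inequality $n$ times telescopes down to the closed form already displayed in the excerpt, whose base term $|\log(1+e^{-\lambda(s+k+n)})|$ decays like $e^{-\Re\lambda (k+n)}/\mu$.

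Second, I would take the supremum over $\mathcal{N}$ and bundle the factors. Setting $r = e^{\log(A_\mu+\delta'')-\Re\lambda}$ (which is strictly less than $1$ by the hypothesis $\Re\lambda > \log A_\mu$, provided $K$ is chosen so that $\delta''$ is small enough), the iterated Lipschitz estimate becomes
$$
\|\tau^{n+1}(s+k) - \tau^{n}(s+k)\|_\mathcal{N} \le C\,r^{n}
$$
for a constant $C$ depending only on $\mathcal{N}$ and $K$. Summing this geometric bound shows that $\{\tau^n(s+k)\}$ is Cauchy in the supremum norm on $\mathcal{N}$, hence converges uniformly to a limit $\tau(s+k)$; Weierstrass's theorem then upgrades the limit to a holomorphic function on the interior of $\mathcal{N}$.

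The main obstacle is bookkeeping: one has to juggle the three small error parameters $\delta,\delta',\delta''$ coming respectively from the Lipschitz bound for $\log(1+x)$, the amplification constant in the recursion, and the limsup definition of $A_\mu$, and show that by choosing $K$ large enough all three can be simultaneously made small enough that the geometric ratio $r$ stays below $1$. The hypothesis $\Re\lambda > \log A_\mu$ leaves a strict gap that absorbs these perturbations, but making this absorption uniform on $\mathcal{N}$ requires invoking compactness of $\mathcal{N}$ and the fact that $\mathcal{N}$ lies in the open weak Fatou set so that $\beta(s+k+j)$ is bounded below on $\mathcal{N}$ away from any preimages of the poles. Once this is verified the theorem follows directly from the geometric Cauchy bound above.
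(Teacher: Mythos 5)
Your proposal follows essentially the same route as the paper's own proof: subtract consecutive instances of the recursion for $\tau$, apply the Lipschitz bound for $\log(1+x)$ together with the weak-Fatou bound on $1/(\mu\beta)$ to get a geometric contraction with ratio $r = e^{\log(A_\mu + \delta'')-\Re\lambda} < 1$, iterate down to the base term $\log(1+e^{-\lambda(s+k+n)})$, and sum the resulting geometric series. The only cosmetic difference is that you phrase the conclusion as a Cauchy argument for $\tau^n$ while the paper expresses the same thing as normal convergence of $\sum_j \rho^j$ via $\rho^{n+1} = \tau^{n+1}-\tau^n$; these are identical, and your explicit appeal to Weierstrass to upgrade uniform convergence to holomorphy is a small, welcome clarification the paper leaves implicit.
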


\begin{proof}
Starting from the bound:

$$
||\tau^{n+1}(s+k) - \tau^n(s+k)||_{\mathcal{N}} \le \left| \left|\frac{1+\delta'}{\mu |\beta(s+k+1)|}
\right|\right|_{\mathcal{N}}\left|\left| \tau^{n}(s+k+1) - \tau^{n-1}(s+k+1)\right|\right|_{\mathcal{N}}\\
$$

Where $\delta' \to 0$ as $K \to \infty$; we can bound:

$$
\left|\left|\frac{1+\delta'}{\mu |\beta(s+k+1)|}
\right|\right|_{\mathcal{N}} \le A_\mu + \delta''\\
$$

For $\delta'' \to 0$ as $K \to \infty$. Iterating this relationship takes us to:

\begin{align*}
||\tau^{n+1}(s+k) - \tau^n(s+k)||_{\mathcal{N}} &\le (A_\mu + \delta'')^n||\tau^1(s+k+n) - \tau^0(s+k+n)||_{\mathcal{N}}\\
&\le (A_\mu+\delta'')^n||\log(1+e^{-\lambda (s+k+n)})||_{\mathcal{N}}\\
\end{align*}

Letting $\Re \lambda > \log(A_\mu + \delta'')$; then this expression is bounded for some $C \in \mathbb{R}^+$:

$$
||\tau^{n+1}(s+k) - \tau^n(s+k)||_{\mathcal{N}} \le Ce^{-(\lambda-\log (A_\mu+\delta''))n} = Cr^n\\
$$

For $0 < r < 1$. This means that:

$$
||\rho^{n+1}(s+k)||_{\mathcal{N}} \le Cr^n\\
$$

And consequently:

$$
\sum_{j=1}^\infty ||\rho^{j}(s+k)||_{\mathcal{N}} \le \sum_{j=0}^\infty Cr^j = \frac{C}{1-r}\\
$$

Therefore the series:

$$
\tau(s+k) = \sum_{j=1}^\infty \rho^j(s+k)\\
$$

Converges normally and $\tau(s+k)$ is holomorphic on $\mathcal{N}$ for some $k > K$ when $\Re \lambda > \log A_\mu$.
\end{proof}

We have not effectively described the value $K$; nor its dependence on $\mathcal{N}$; and so it is difficult to call this an effective solution to the inverse Abel function. But nonetheless; in no trivial manner; we have found a function:

$$
F(s) = \beta(s) + \tau(s)\\
$$

Which is holomorphic somewhere; in which:

$$
F(s+1) = \exp F(s) = e^{\mu F(s)}\\
$$

The natural way to extend this function to its maximal domain, is using the implicit function theorem. By which:

$$
F(s) = \exp y\\
$$

And the function $y$ exists locally everywhere $F(s) \neq 0$ (because $\exp$ is always surjective on $\mathbb{C}/\{0\}$ for every $\mu$) and where-ever $\frac{d}{dy} \exp (y) \neq 0$; which is everywhere. By a monodromy theorem, we can connect all of these neighborhoods to arrive at a function $y$ in which $y(s) = F(s-1)$. This function can be extended to the left, excluding points where $F(s_0) = 0$; in which $F(s_0 -1)$ will be a logarithmic singularity with a branch cut. We can iterate this process, and we arrive at an inverse Abel function that is expanded to a much larger domain.

It becomes the following corollary:

\begin{corollary}[The Pull Back Corollary]\label{corPB}
For $\mathcal{N} \subset \mathcal{P}$ a compact neighborhood in the weak Fatou set; and $\Re \lambda > \log A_\mu$; for all $k \in \mathbb{Z}$:

$$
\lim_{n\to\infty} \tau^n(s+k) = \tau(s+k)\\
$$

Is holomorphic for $s \in \mathcal{N}_k^*$; where $\mathcal{N}/\mathcal{N}_k^*$ is measure zero under an area measure (it consists of branch cuts). Additionally:

$$
\lim_{k\to\infty} \mathcal{N}/\mathcal{N}_k^* = \emptyset\\
$$
\end{corollary}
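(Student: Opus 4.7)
The plan is to use Theorem \ref{thmABLCVG} as a seed on a sufficiently right-shifted copy of $\mathcal{N}$, where holomorphy of $\tau$ is already established, and then to propagate that holomorphy in both directions along $s \mapsto s+1$ via the functional identity $F(s+1) = \exp F(s)$ for $F = \beta + \tau$. Forward propagation is free because $\exp$ is entire and $F(s+k)$ inherits holomorphy directly from $F(s+k-1)$. Backward propagation requires inversion via $\log$, and it is exactly here that logarithmic singularities and their branch cuts appear, contributing to $\mathcal{N} \setminus \mathcal{N}_k^*$. Since $F$ is holomorphic and not identically constant on any translate of $\mathcal{N}$, its zero set on each level is discrete, so each backward step produces at most countably many isolated singularities together with analytic arcs issuing from them, a set of area measure zero.

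Concretely, Theorem \ref{thmABLCVG} supplies a threshold $K$ such that for every $k > K$ the series $\tau(s+k) = \sum_{j=1}^\infty \rho^j(s+k)$ converges uniformly on $\mathcal{N}$ to a holomorphic function; for such $k$ I would simply put $\mathcal{N}_k^* = \mathcal{N}$, which already yields the second assertion $\lim_{k \to \infty} \mathcal{N}/\mathcal{N}_k^* = \emptyset$. For $k \le K$, define $F(s+k) = \log F(s+k+1)$ inductively, beginning from $k_0 = K+1$. Each downward step transports holomorphy along $\log$ to the preceding level, except at points where $F(s+k+1) = 0$, each of which forces a logarithmic singularity at $s$ with a choice of branch cut below it. Taking these cuts as piecewise analytic rays emanating from each zero keeps the cumulative exceptional set a countable union of isolated points and analytic arcs, hence of area measure zero.

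To confirm that the continuation agrees with $\lim_{n\to\infty} \tau^n(s+k)$ on $\mathcal{N}_k^*$, I would appeal to the functional recursion of Proposition \ref{PropLev}: on any simply connected component of $\mathcal{N}_k^*$, both the iterated-$\log$ extension and the limit of $\tau^n(s+k)$ satisfy the same one-step identity and coincide on the seed shift $k_0$, so they must agree by the identity theorem for holomorphic functions. Iterating downward only $K - k$ times introduces only finitely many new zero-preimage sets, so the measure-zero bound is preserved at every level.

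The main obstacle is controlling how the zero locus of $F(s+k+1)$ propagates into the branch-cut structure of $F(s+k)$ at each step without the cuts thickening into something of positive area. Recognizing the zero set of $F$ on each translate as a discrete subset (equivalently, a one-dimensional analytic variety), and verifying that one may consistently choose non-overlapping branch cuts that do not accumulate, is the genuine technical content; once that is in place, the finite union over $K-k$ backward steps is manifestly of area measure zero, and the corollary follows.
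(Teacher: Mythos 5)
Your proposal is essentially the paper's argument: seed holomorphy for $k>K$ from Theorem \ref{thmABLCVG}, then pull back level by level via $\log$ (the paper phrases this with the implicit function theorem and a monodromy argument, which amounts to the same thing), attributing the exceptional set $\mathcal{N}\setminus\mathcal{N}_k^*$ to logarithmic branch cuts spawned at zeros of $F$ on the succeeding level. The paper leaves the discreteness of those zero sets and the agreement with $\lim_{n\to\infty}\tau^n$ implicit, so your explicit invocations of the identity theorem and of discreteness of the zero locus are welcome additions, not a departure.
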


We enter now, into a more delicate conversation of $\mathcal{N}$ and $A_\mu$. If we have $\mathcal{N}^1$ and another $\mathcal{N}^2$; producing $A_\mu^1$ and $A_\mu^2$. The value $A_\mu^3$ for $\mathcal{N}^3 = \mathcal{N}^1 \cup \mathcal{N}^2$ is just $A_\mu^3 = \max(A_\mu^1,A_\mu^2)$. This implies that $\lambda$ has a deeper connection between these ideas.

But upon extension, it is very much identical. The value $\mathcal{N}$ is a domain; whether it be a union of neighborhoods, or a neighborhood. And quite exactly, since $A_\mu$ is discoverable in each; there is no difficulty. But it requires a slight refinement to be exactly the inverse Abel function.

\begin{theorem}[The Inverse Abel Theorem]\label{thmINVABEL}
Let $s \in \mathcal{G} \subseteq \mathcal{P}$ be a domain within the weak Fatou set closed under $s \mapsto s+1$ and $s \mapsto s-1$, such that; on all compact subsets $\mathcal{N} \subset \mathcal{G}$:

$$
\limsup_{k\to\infty} \left|\left| \frac{1}{\mu \beta(s+k)}\right|\right|_{\mathcal{N}} \le A\\
$$

Then, when $\Re\lambda > \log A$, the function $\tau(s)$ is holomorphic on a domain $\mathcal{G}^*$ where $\mathcal{G}/\mathcal{G}^*$ is measure zero up to an area measure (it consists only of branch cuts).
\end{theorem}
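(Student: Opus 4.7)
The plan is to bootstrap the local Convergence Theorem \ref{thmABLCVG} and its Pull Back Corollary \ref{corPB} into a global statement by exhausting $\mathcal{G}$ by compact subsets and using the hypothesis that $\mathcal{G}$ is closed under the shifts $s \mapsto s \pm 1$. The Convergence Theorem gives holomorphy of $\tau$ on a compact neighborhood after sufficiently many forward translates, and the Pull Back Corollary propagates that holomorphy backward at the cost of measure-zero branch-cut sets; the Inverse Abel Theorem is essentially the assertion that this local picture assembles coherently on all of $\mathcal{G}$.

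First I would pick an exhaustion $\mathcal{N}_1 \subset \mathcal{N}_2 \subset \cdots \subset \mathcal{G}$ by compact sets with $\bigcup_n \mathcal{N}_n = \mathcal{G}$. For each $n$, the hypothesis gives
$$\limsup_{k\to\infty}\left\|\frac{1}{\mu\beta(s+k)}\right\|_{\mathcal{N}_n} \le A,$$
so since $\Re\lambda > \log A$, Theorem \ref{thmABLCVG} supplies some $K_n$ such that $\tau(s+k)$ converges uniformly on $\mathcal{N}_n$ for every $k > K_n$. Equivalently, $\tau$ is holomorphic on each forward translate $\mathcal{N}_n + k \subset \mathcal{G}$ for $k > K_n$, the containment being guaranteed by the translation closure of $\mathcal{G}$.

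Next I would pull back. Starting from holomorphy on $\mathcal{N}_n + K_n + 1$, the Pull Back Corollary lets us solve $F(s) = \exp y$ locally and connect the local inverses by monodromy, so $\tau$ extends back one unit at a time down to $\mathcal{N}_n$. Each pull-back step introduces logarithmic singularities precisely at points where some forward iterate $F(s_0 + j)$ vanishes; these zeros form a discrete subset of each translate, and their backward pre-images under the $\log$-chain yield a countable union of analytic arcs, hence an area-measure-zero set. Let $\mathcal{N}_n^*$ be $\mathcal{N}_n$ minus this exceptional set and set $\mathcal{G}^* = \bigcup_n \mathcal{N}_n^*$. By countable additivity of measure-zero sets, $\mathcal{G} \setminus \mathcal{G}^*$ has area zero.

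The one genuine consistency issue — and the step I expect to be most delicate — is showing that the two definitions of $\tau$ arising from $\mathcal{N}_n^*$ and $\mathcal{N}_m^*$ on their overlap agree. Both are obtained by pulling back the same limit $\tau = \lim_{n\to\infty} \tau^n$ along iterates of $\log_b$, and on any connected component of $\mathcal{G}^*$ the functional equation $F(s+1) = \exp F(s)$ together with the monodromy theorem forces uniqueness of the analytic continuation. The bookkeeping of branch cuts — verifying that iterating the pull-back through arbitrarily many preimages of zeros still produces an area-zero residue — is the main technical hurdle, but it reduces to the observation that each step adds only a discrete set of new singularities with analytic cuts; secondarily, the constants $K_n$ need not be uniform in $n$, so $\mathcal{G}^*$ is genuinely built as an increasing union rather than by a single application of Theorem \ref{thmABLCVG}.
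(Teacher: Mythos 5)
Your proposal takes essentially the same route as the paper's proof: apply the Pull Back Corollary \ref{corPB} to compact subsets, use the translation closure of $\mathcal{G}$ to tile forward and backward orbits, and assemble $\mathcal{G}^*$ as a union whose complement in $\mathcal{G}$ is a countable union of measure-zero branch-cut sets. Your substitution of a countable compact exhaustion for the paper's unindexed union over all compact $\mathcal{N}\subset\mathcal{G}$, together with your explicit note on overlap compatibility via the functional equation, is a tidier bookkeeping of the same argument.
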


\begin{proof}
Let us take an arbitrary compact domain $\mathcal{N}\subset\mathcal{G}$; then necessarily:

$$
\limsup_{k\to \infty} \left|\left|\frac{1}{\mu\beta(s+k)}\right|\right|_{\mathcal{N}} = A_\mu \le A\\
$$

For $\Re \lambda > \log A$ we know, from Corollary \ref{corPB} that:

$$
F(s+k) = \beta(s+k) + \tau(s+k)\\
$$

Is holomorphic on $\mathcal{N}_k^*\subset \mathcal{N}$; which $\mathcal{N}/\mathcal{N}_k^*$ is area measure zero for all $k \in \mathbb{Z}$. Since $\mathcal{G}$ is closed under the translations $s \mapsto s+1$ and $s\mapsto s-1$; we have:

$$
\mathcal{G}^* = \bigcup_{k \in \mathbb{Z}}\bigcup_{\mathcal{N}_k^*\subset \mathcal{G}} \mathcal{N}_k^*+k\\
$$

Which means for all forward orbits and all backward orbits; all compact neighborhoods (minus the branch cuts from the backwards orbits) within $\mathcal{G}$; the final inverse Abel function is holomorphic.

Upon which, if we take the area measure integral:

$$
\int_{\mathcal{G}/{\mathcal{G}^*}} \,dA = 0\\
$$

Because the only difference between $\mathcal{G}$ and $\mathcal{G}^*$ are a bunch of branch cuts in the domain; caused by a countable amount of zeroes (at worse). This is an exercise in set theory arithmetic once you identify:

$$
\mathcal{G} = \bigcup_{k \in \mathbb{Z}}\bigcup_{\mathcal{N}\subset \mathcal{G}} \mathcal{N}+k\\
$$
 And,

$$
\int_{\mathcal{N}/{\mathcal{N}_k^*}} \,dA = 0\\
$$

The measure theory implores the result. In which:

$$
\mathcal{G}/\mathcal{G}^* = \bigcup_{k \in \mathbb{Z}}\bigcup_{\mathcal{N} \subset \mathcal{G}} \mathcal{N}+k/\mathcal{N}_k^*+k\\
$$

And:

$$
\int_{\mathcal{G}/\mathcal{G}^*}\, dA \le \sum_{k \in \mathbb{Z}} \sum_{\mathcal{N} \subset \mathcal{G}} \int_{\mathcal{N}/\mathcal{N}^*_k} \, dA = 0\\
$$
\end{proof}

For the remainder of this paper: we will focus on examples of this theorem in action, consequences of this theorem, and slight modifications of this theorem.\\

Before we advance into the next section the author would like to add a comment. The convergence of $\tau^n$ is dependent on $\lambda$, but if as we vary $\lambda$, and we have $\Re \lambda > A$; convergence will be locally uniform in $\lambda$. As a result; where-ever $\Re \lambda > A$ we can be assured that $F$ is holomorphic in $\lambda$. So the weak Fatou set may change as we move $\lambda$, and the value $A_\mu$ may move, but so long as we have these compact bounds for varying $\lambda$; we are guaranteed holomorphy in $\lambda$. Of which, the point we are trying to make is that this result is at least locally holomorphic in $\lambda$. This is apparent from the locality of the bounds. The same comment holds for $\mu$; we are locally holomorphic in $\mu$.

\section{The Shell-Thron Region; a Toy Model}\label{sec14}
\setcounter{equation}{0}

We are going to look at where the $\beta$ approximation is the most valuable. This is in the Shell-Thron region. But we are going to ease into the discussion. We recall the definition of the Shell-Thron region:

\begin{definition}[The Shell-Thron Region]\label{def:Sh-Th}
The Shell-Thron region $\mathfrak{S}$ is defined as the region:

$$
\mathfrak{S} = \{b \in \mathbb{C}\,|\,\lim_{n\to\infty}\exp_b^{\circ n}(1)\,\,\text{converges}\}\\
$$
\end{definition}

And from this; we will introduce the Julia set, and the Fatou set. We will again, make the change of variables $b = e^\mu$.

\begin{definition}[The Exponential Fatou Set]\label{def:FATEXP}
The Exponential Fatou set $\mathcal{F}_\mu$ is the Fatou set of $e^{\mu z} = \exp_b(z)$. It is an open domain, in which for all $z \in \mathcal{F}_{\mu}$, there exists a neighborhood $\mathcal{N} = \{ y \in \mathbb{C} |\,|y-z| < \delta\}$ such that $\mathcal{N} \subset \mathcal{F}_\mu$, and the orbits $\exp^{\circ n}(\mathcal{N})$ are a normal family. Which means there exists some subsequence $n_k$ such that $\lim_{k\to\infty} \exp^{\circ n_k}(z)$ converges uniformly on $\mathcal{N}$--equivalently, $||\exp^{\circ n}(z)||_{z\in\mathcal{N}}$ is a bounded sequence.
\end{definition}

And its complement, the exponential Julia set:

\begin{definition}[The Exponential Julia Set]\label{def:FATEXP}
The Exponential Julia set $\mathcal{J}_\mu$ is the Julia set of $e^{\mu z} = \exp_b(z)$. It is a closed domain, in which for all $z \in \mathcal{J}_{\mu}$, every neighborhood $\mathcal{N} = \{ y \in \mathbb{C} |\,|y-z| < \delta\}$ the orbits $\exp^{\circ n}(\mathcal{N})$ are not a normal family--not uniformly bounded. Equivalently, $\mathcal{J}_\mu = \mathbb{C} / \mathcal{F}_\mu$.
\end{definition}

We have opted to call these the exponential Fatou/Julia set because this is a rather inadequate definition of the Fatou set and the Julia set. But for entire functions it is equivalent to the standard definition involving much deeper topological arguments. So to make a distinction that this is only the definition for our purposes, I added the quip exponential. 

We want to use the exponential Fatou/Julia set to help describe the weak Fatou/Julia set. And we want to show that on the interior of $\mathfrak{S}$, that the weak Julia set is small enough for the weak Fatou set to be interesting. We enter then, into the more topological nature of our paper; what the weak Fatou set, and what the weak Julia set look like.\\

To begin, we will look at $\mu = \ln(2)/2$ where $e^\mu = \sqrt{2}$. And we want to effectively describe the weak Fatou/Julia set. This case will give us a good picture of how everything will behave in the Shell-Thron region. To qualify a value as belonging to the weak Julia set; we need to only qualify where:

$$
\beta(s+k) \in \mathcal{J}_\mu\,\,\text{for}\,\,k>K\\
$$

This is drawn from the relationship that:

$$
\limsup_{k\to\infty} \left|\left|\frac{1}{\exp_b^{\circ k}(z)}\right|\right|_{\mathcal{N}} < \infty\\
$$

When $\mathcal{N} \subset \mathcal{F}_\mu$; so that the only time this identity can fail is when $z \in \mathcal{J}_\mu$ and $\mathcal{N}$ is a neighborhood of $z$. Since for large $k$ we can expect $\beta(s+k)$ to look like the orbit $\exp^{\circ k}(\beta(s))$; this is a good indicator of where the weak Fatou set resides, and where the weak Julia set resides.

Now, what's especially important to remember, is that $\beta(-\infty) = 0$ and for $\beta(\Re(s) < 0)$ we are in a neighborhood of zero. So, as we push forward with $\beta(s+k)$ we look a lot like the orbits $\exp_{\sqrt{2}}^{\circ k}(\mathcal{N})$ for $\mathcal{N}$ a neighborhood of $0$. But additionally, the weak Fatou set, is larger than just the points encompassed by this. To understand, we have to examine the weak Julia set further.

The line $[4,\infty) \subset \mathcal{J}_{\log(2)/2}$ belongs to the exponential Julia set. Now clearly $[4,\infty) \in \mathcal{P}$ the weak Fatou set; but, if $\beta(s): \mathcal{A} \to [4,\infty)$ then, we can get close to $\mathcal{A} \subset \mathcal{B}$ the weak Julia set. Which means, for values $z= \beta(s)$ which belong to the Julia set; we are not necessarily in the weak Julia set, but we are likely so; or there are neighboring points.

For example, consider:

$$
\beta(x+\pi i) = \OmSum_{j=1}^\infty \frac{\sqrt{2}^z}{1-e^{j-x}}\,\bullet z\\
$$

Which has singularities at $x = k \in \mathbb{N}_{\ge 1}$. The function $\beta(x+\pi i)$ is real valued for $x \in \mathbb{R}/\mathbb{N}_{\ge 1}$; and tends to infinity as $\lim_{x\to k^+} \beta(x+\pi i) = +\infty$. Thus within this domain, we must have $x\in (0,\delta)$ that $\beta(x + k+\pi i) \in \mathcal{J}_{\log(2)/2}$. 

There is a famous result in complex dynamics, which we reference from John Milnor \cite{milnor_2000}. If $z \in \mathcal{J}$ is in the Julia set of a function $f$; then for any neighborhood $\mathcal{N} = \{y \in \mathbb{C}\,|\,|y-z| < \delta\}$, the orbits $f^{\circ n}(\mathcal{N})$ are dense in $\mathcal{J}$. 

So, since we know that $\exp_{\sqrt{2}} : [4,\infty) \to [4,\infty)$; and each $\beta(x+k+\pi i) \in [4,\infty)$ while $x \in (0,\delta)$; we can expect that this displays similar chaos as the Julia set for any open neighborhood about this. Which is to say, for any neighborhood of $x$, as $k \to \infty$; we are approaching every point in the Julia set. This does not force every value of the Julia set to be in the weak Julia set, but it does tell us that; when $x\in(0,\delta)$ for small enough delta, we must be in the weak Julia set.  Because, if $(0,\delta) \subset \mathcal{N}$, a neighborhood:

$$
\limsup_{k\to \infty} \left|\left|\frac{1}{\beta(s+k+\pi i)}\right|\right|_{\mathcal{N}} = \infty
$$

Despite the absolutely obvious result on the real-line that:

$$
\limsup_{k\to \infty} \left|\left|\frac{1}{\beta(x+k+\pi i)}\right|\right|_{x \in [\delta',\delta]} = 0\\
$$

It is again, our strict attention to neighborhoods that saves the day. For each neighborhood of a point in the Julia set, its orbits are dense in the Julia set. This is the exponential function; hence entire; hence the Julia set includes the point at infinity; hence every neighborhood of the Julia set gets arbitrarily close to $\infty$ on the Riemann sphere; hence the orbits get arbitrarily close to $0$ in a dense manner for every neighborhood. Which, implies the supremum norm diverges. In Figure \ref{fig:ROOT_TWO_SING} we get a close look at parts of the weak Julia set near a singularity.

\begin{figure}
    \centering
    \includegraphics[scale = 0.4]{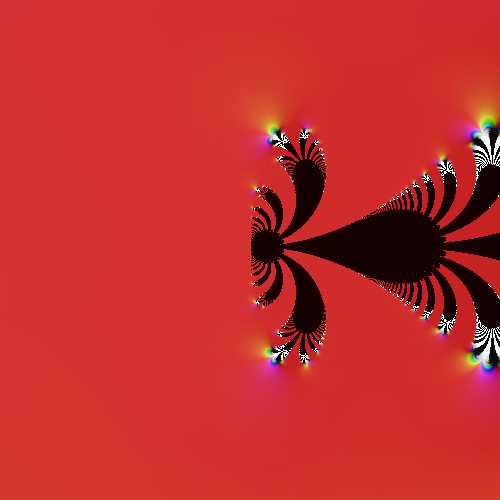}
    \caption{A close up of the singularity at $s = 5+ \pi i$ (a $0.2$x$0.2$ window) of $\beta_{1,\log(2)/2}(s)$. This gives a good picture of the weak Julia set, which continues as we push forward in the iterate. The black region is not precisely the weak Julia set; the weak Julia set is nestled somewhere within it.}
    \label{fig:ROOT_TWO_SING}
\end{figure}

To better see the weak Julia set; we can produce a more exact mapping. And from that, we can display the entire weak Julia set of $\beta_{1,\log(2)/2}$. In Figure \ref{fig:ROOT_TWO_JULIA} we have highlighted the weak Julia set by assigning a white pixel; and a black pixel for the weak Fatou set. The entire weak Julia set will be a grid of these fractals.

\begin{figure}
    \centering
    \includegraphics[scale=0.5]{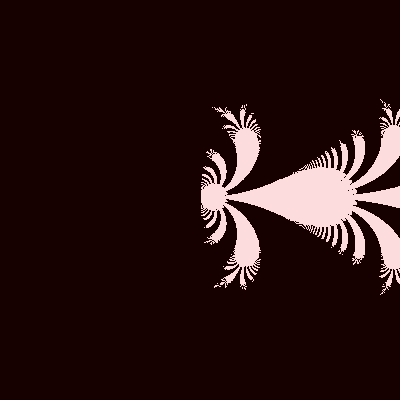}
    \caption{A better look at the weak Julia set of $\beta_{1,\log(2)/2}$; which occurs periodically at each $j + (2k+1)\pi i$ for all $j,k \in \mathbb{Z}$. This is graphed on a $0.2$x$0.2$ window. This describes the entire weak Julia set.}
    \label{fig:ROOT_TWO_JULIA}
\end{figure}

\begin{figure}
    \centering
    \includegraphics[scale = 0.4]{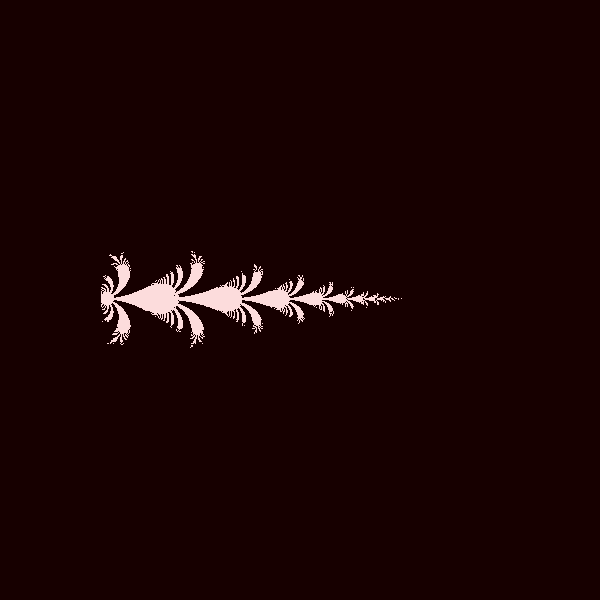}
    \caption{A bigger picture about the same singularity; it describes the exact fractal which spawns from $j+(2k+1)\pi i$ for $j,k \in 
    \mathbb{Z}$.}
    \label{fig:ROOT_TWO_JULIA_WIDE}
\end{figure}

\section{The Shell-Thron Region}\label{sec15}
\setcounter{equation}{0}

The point of this section is to get the best result we're able to get on the Shell-Thron Region. We'll stick specifically to the interior of the Shell-Thron region. For that we will call $\mathfrak{S}^{o}$ the interior of the Shell-Thron region $\mathfrak{S}$.

Then, there exists a geometrically attracting fixed point $\omega_\mu$, such that:

\begin{align*}
\exp_b(\omega_\mu) &= \omega_\mu\\
\lim_{n\to\infty} \exp_b^{\circ n}(0) &= \omega_\mu\\
\left|\frac{d}{dz}\Big{|}_{z=\omega_\mu} \exp_b(z)\right| = |\mu \omega_\mu| &< 1\\
\end{align*}

We've attached two graphs in Figure \ref{fig:Sh-Th-fixed} and Figure \ref{fig:Sh-Th-fixed-var}. These describe the fixed points found in the Shell-Thron region. The black pixels describe divergence; and the coloured portion are the value of the fixed point for $b^z$ or through the change of variables $e^{\mu z}$.

\begin{figure}
    \centering
    \includegraphics[scale=0.4]{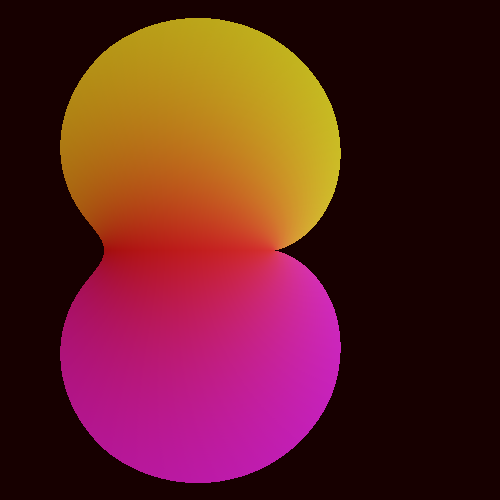}
    \caption{This is a graph of $\omega$ for the variable $b$. This describes the Shell-Thron region; and assigns the value of the fixed point of $\exp_b(z) = b^z$. This is precisely a graph of $\lim_{n\to\infty} \exp_b^{\circ n}(0)$; where divergence is assigned a black pixel.}
    \label{fig:Sh-Th-fixed}
\end{figure}

\begin{figure}
    \centering
    \includegraphics[scale = 0.4]{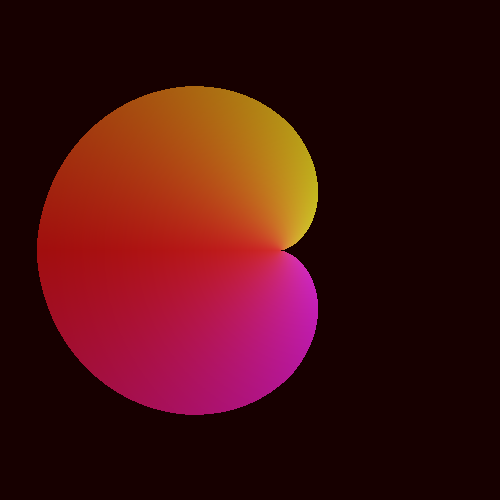}
    \caption{This is a graph of $\omega$ for the variable $\mu$. This describes the Shell-Thron region; and assigns the value of the fixed point of $e^{\mu z}$. This is precisely a graph of $\lim_{n\to\infty} \exp_b^{\circ n}(0)$; where divergence is assigned a black pixel; but we've made the change of variables $b = e^\mu$.}
    \label{fig:Sh-Th-fixed-var}
\end{figure}

Additionally, we can expect $0$ to be in the Fatou set of $\exp_b$; and therefore; there's a neighborhood $\mathcal{N}$ about zero, in which $\exp_b^{\circ n}(\mathcal{N}) \to \omega_\mu$. This means we are in the best possible situation. Any orbit of $0$ eventually coalesces to $\omega_\mu$; and additionally, any orbit of $\beta(s) \approx 0$ means for large enough $k>K$ that $\beta(s+k) \to \omega_\mu$.

In such a case; we obtain a holomorphic tetration everywhere in the weak Fatou set; so long as $\Re \lambda > -\log(|\mu \omega_\mu|)$. Proving this result isn't particularly hard; just nuanced. To begin, we can implore the identity:

$$
\log^{\circ n} \beta(s+n+k) \sim \beta(s+k)\,\,\text{while}\,\,s \in \mathcal{P}\,\,\text{as}\,\,k\to\infty\\
$$

And thus:

$$
\exp^{\circ k} \beta(s) \sim \beta(s+k)\,\,\text{while}\,\,s \in \mathcal{P}\,\,\text{as}\,\,k\to\infty\\
$$

Therefore, since we know for all $\Re(s) < -K$, large enough $K$ that $\beta(s)$ is in an arbitrarily small neighborhood of zero; it must belong to the Fatou set of $\exp_b(z)$; and further its iterates converge to the fixed point $\omega_\mu$. Thereby, we are given the following theorem:

\begin{theorem}[The Inverse Abel Function On The Shell-Thron Region Theorem]\label{thmINVABLSHTH}
Let $b \in \mathfrak{S}^{o}$ be in the interior of the Shell-Thron region, and let $b = e^\mu$. Let $\mathcal{P}_\lambda$ be the weak Fatou set of $\beta_{\lambda,\mu}(s)$. If $\Re \lambda > - \log(|\mu \omega_\mu|)$, where:

$$
\omega_\mu = \lim_{n\to\infty} \exp^{\circ n}_b(0)\\
$$

There exists a holomorphic inverse Abel function $F_\lambda : \widetilde{\mathcal{P}_\lambda} \to \mathbb{C}$ such that:

$$
\int_{\mathcal{P}_\lambda/\widetilde{\mathcal{P}_\lambda}} \, dA = 0\\
$$

For the standard Lebesgue area measure $dA$ on $\mathbb{R}^2$; and,

$$
F_\lambda(s+1) = e^{\mu F_\lambda(s)}\\
$$

While,

$$
F_\lambda(s+2 \pi i/\lambda) = F_\lambda(s)\\
$$
\end{theorem}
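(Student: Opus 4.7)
The plan is to identify the constant $A$ of the Inverse Abel Theorem \ref{thmINVABEL} with $1/|\mu\omega_\mu|$ on the weak Fatou set $\mathcal{P}_\lambda$, and then simply invoke that theorem. The hypothesis $\Re\lambda>-\log|\mu\omega_\mu|$ is exactly $\Re\lambda>\log(1/|\mu\omega_\mu|)=\log A$, so the arithmetic lines up provided we can supply the bound.

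First I would show that for every compact $\mathcal{N}\subset\mathcal{P}_\lambda$,
\[
\lim_{k\to\infty}\bigl\| \beta_{\lambda,\mu}(s+k)-\omega_\mu\bigr\|_{\mathcal{N}}=0.
\]
The key is the Asymptotic Theorem of the first kind \ref{thmASYM1}: iterating $\ln\beta(s+1)/\mu=\beta(s)+\mathcal{O}(e^{-\lambda s})$ shows that, uniformly on any compact set with sufficiently negative real part, $\beta(s+k)$ shadows the forward orbit $\exp_b^{\circ k}(\beta(s))$ of the exponential. Because $b\in\mathfrak{S}^o$, the point $\omega_\mu$ is geometrically attracting with multiplier $|\mu\omega_\mu|<1$, and $0$ lies in its immediate basin (by the Shell--Thron definition $\exp_b^{\circ n}(1)\to\omega_\mu$, and hence so does $\exp_b^{\circ n}(0)$). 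Since $\beta(s)\to 0$ as $\Re(s)\to-\infty$, there is a half-plane $\{\Re(s)<-M\}$ on which $\beta$ lands inside a neighbourhood of $0$ contained in the immediate basin; on such a half-plane the shadowing gives $\beta(s+k)\to\omega_\mu$ uniformly on compacta. By the functional equation $\beta(s+k+1)=e^{\mu\beta(s+k)}/(1+e^{-\lambda(s+k)})$ and analytic continuation along translates $s\mapsto s+1$, the convergence then propagates throughout $\mathcal{P}_\lambda$: weak normality of $\beta(s+k)$ on $\mathcal{N}$ combined with forward-invariance under $\exp_b$ (and the absence of any other attracting cycle on $\mathfrak{S}^o$) forces every limit of a convergent subsequence of $\beta(s+k)$ to be the fixed point $\omega_\mu$.

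Once this convergence is in hand, $1/(\mu\beta(s+k))\to 1/(\mu\omega_\mu)$ uniformly on each compact $\mathcal{N}\subset\mathcal{P}_\lambda$, so
\[
\limsup_{k\to\infty}\Bigl\|\frac{1}{\mu\beta_{\lambda,\mu}(s+k)}\Bigr\|_{\mathcal{N}}\le \frac{1}{|\mu\omega_\mu|}=:A.
\]
The hypothesis $\Re\lambda>-\log|\mu\omega_\mu|=\log A$ is then satisfied, so Theorem \ref{thmINVABEL} applies with $\mathcal{G}=\mathcal{P}_\lambda$ (which is translation-invariant since $\beta$ has the same weak normality at $s$ as at $s\pm1$). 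This produces the function $F_\lambda=\beta_{\lambda,\mu}+\tau$, holomorphic on a subset $\widetilde{\mathcal{P}_\lambda}\subset\mathcal{P}_\lambda$ of full area measure, and satisfying the inverse Abel equation $F_\lambda(s+1)=e^{\mu F_\lambda(s)}$ by construction. The periodicity $F_\lambda(s+2\pi i/\lambda)=F_\lambda(s)$ is inherited: $\beta_{\lambda,\mu}$ has period $2\pi i/\lambda$ because each factor $e^{\mu z}/(1+e^{\lambda(j-s)})$ has that period in $s$, and the recursion \eqref{eq:TAU} for $\tau^n$ involves only $\beta(s+\cdot)$ and $e^{-\lambda s}$, both $2\pi i/\lambda$-periodic, so every $\tau^n$, and therefore the uniform limit $\tau$, shares the period.

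The genuine obstacle is the convergence $\beta(s+k)\to\omega_\mu$ on all of $\mathcal{P}_\lambda$ and not merely on a far left half-plane. The shadowing from Theorem \ref{thmASYM1} is comfortable on the left, but pushing it rightwards requires using weak normality to rule out escape to $\infty$ while also using the interior Shell--Thron hypothesis to rule out the existence of any competing limit set. The cleanest route is to argue by contradiction: a subsequential limit of $\beta(s+k_j)$ different from $\omega_\mu$ would yield, via continuity of $\exp_b$, a bounded orbit of $\exp_b$ avoiding the basin of $\omega_\mu$, contradicting the fact that on $\mathfrak{S}^o$ the exponential has $\omega_\mu$ as its unique attracting cycle and the weak normality of $\beta(\cdot+k)$ on $\mathcal{N}$ forbids escape to $\infty$ on the Riemann sphere. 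Everything after this step is an application of the machinery already built.
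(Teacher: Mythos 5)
Your proposal follows essentially the same route as the paper's proof: both reduce to Theorem \ref{thmINVABEL} by establishing $\limsup_{k\to\infty}\|1/(\mu\beta(s+k))\|_{\mathcal{N}}\le 1/|\mu\omega_\mu|$ on compact $\mathcal{N}\subset\mathcal{P}_\lambda$, and both obtain this from the shadowing $\beta(s+n)\sim\exp_b^{\circ n}\beta(s)$ combined with the fact that on $\mathfrak{S}^o$ the point $0$ lies in the basin of the geometrically attracting fixed point $\omega_\mu$. The one place you add substance is the explicit verification of the $2\pi i/\lambda$-periodicity of $F_\lambda$ (from the periodicity of each factor of the infinite composition and of the $\tau$-recursion), which the paper's proof simply asserts; you also honestly flag the propagation of $\beta(s+k)\to\omega_\mu$ from a far-left half-plane to all of $\mathcal{P}_\lambda$ as the real technical hurdle, whereas the paper covers it in a single unargued sentence.
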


\begin{remark}
The function $F_\lambda$ is holomorphic in $\lambda$; and most importantly, determines the period of $F$ and the position of the essential singularities. At each $s$ where $\lambda(j-s) = (2k+1)\pi i$ for $j,k \in \mathbb{Z}$ is precisely where the essential singularities will appear in our function $\beta_\lambda(s)$; and typically produce fractal behaviour in $\beta$. So this value controls where the worst of our fractals will appear, and whether they'll appear on a square grid; or on a more sheared grid ($\lambda$ is real, versus $\lambda$ complex).

The period of $F_\lambda$ is precisely $2 \pi i/\lambda$. When the period is complex and not purely imaginary; expect some very weird fractals and a plethora of logarithmic singularities--but again, they are negligible under an area map.
\end{remark}

\begin{proof}
By The Inverse Abel Theorem \ref{thmINVABEL}; we need only show that:

$$
\limsup_{k\to\infty} \left|\left|\frac{1}{\mu \beta(s+k)}\right|\right|_\mathcal{N} \le \frac{1}{|\mu \omega_\mu|}\\
$$

For all $\mathcal{N} \subset \mathcal{P}_\lambda$. This follows by the asymptotic, for all $s \in \mathcal{P}$:

$$
\beta(s+n) \sim \exp^{\circ n}(\beta(s))\\
$$

This follows because $\exp(\beta(s)) \sim \beta(s+1)$ for large $\Re(s)$. It does so normally, as well. Which means that:

$$
\left| \left| \exp(\beta(s)) - \beta(s+1)\right|\right|_{\mathcal{N}} \to 0\\
$$

And, going by induction, assume a similar result holds for $k < n$; then,

\begin{align*}
\left|\left| \exp^{\circ n} \beta(s) - \beta(s+n)\right|\right| &\le \left|\left| \exp^{\circ n-1} \exp \beta(s) - \exp^{\circ n-1} \beta(s+1)\right|\right|\\
&\,+ \left|\left|\exp^{\circ n-1} \beta(s+1) - \beta(s+n)\right|\right|\\
&\to 0\,\,\text{as}\,\,\Re(s) \to \infty\\
\end{align*}

Now, for $\Re(s) < -R$ for $R$ significantly large; and $s \in \mathcal{P}$:

$$
\lim_{n\to\infty} \exp^{\circ n} \beta(s) = \omega_\mu\\
$$

But this equates to $\beta(s+n)$ eventually. Which implies for all $s \in \mathcal{P}$:

$$
\lim_{n\to\infty} \beta(s+n) = \omega_\mu\\
$$

And furthermore, does so normally:

$$
\left|\left| \beta(s+n) - \omega_\mu\right|\right|_{\mathcal{N}} <\epsilon\\
$$

For large enough $n>N$ found for any $\epsilon > 0$. Thereby, we know that:

$$
\limsup_{k\to\infty} \left|\left|\frac{1}{\mu \beta(s+k)}\right|\right|_\mathcal{N} \le \frac{1}{|\mu \omega_\mu|}\\
$$
\end{proof}

\section{The Fractals of the weak Fatou set}\label{sec16}
\setcounter{equation}{0}

To anoint this chapter with some visuals; we have included a bunch of graphs of the weak Julia set/weak Fatou set for various $\mu$. The weak Julia set is assigned a white pixel; and the weak Fatou set is assigned a black pixel. We have kept the multiplier $\lambda = 1$ fixed; but since we are holomorphic in $\lambda$, expect a continuous deformation of the following graphs, as we move $\lambda$. 

Increases or decreases in the real part of $\lambda$ equate to a vertical compression, or stretching (respectively), of the strip where the singularities border. Increases or decreases in the imaginary part of $\lambda$ result in lateral shifts of the singularities for each row of singularities. This looks like a shear; by which a square becomes a parallelogram.

For that reason, we can identify the black region as where we are holomorphic, and the white fractals as where we are non-holomorphic. The remainder of this paper will talk about the non-holomorphic case; which gives The Asymptotic Theorem of the third kind. But, for the moment; on the black area we have an inverse Abel function (up to a measure zero area set).

Corresponding to each weak Fatou/Julia set to follow are the following inverse Abel functions. One can discern precisely where the discontinuities appear; right where the weak Julia set happens. Thus we juxtapose each fractal, with what the holomorphic function coinciding with it looks like.

Additionally; due to the laws of bifurcations; the actual weak Julia set will be a subset of the white fractal; there is no work around to this, as we are already approaching layers of overflows in this construction. By which, these are merely approximations of the weak Julia set; they are most definitely more complex.

A detailed manner of how these results are coded in Pari-GP will be written up later. They are largely amalgamations of code privy to the Tetration forum. Additionally, these heuristics are derived from similar short cut mathematics; coding iterated exponentials is a nasty business.

\begin{figure}
    \centering
    \includegraphics[scale=0.4]{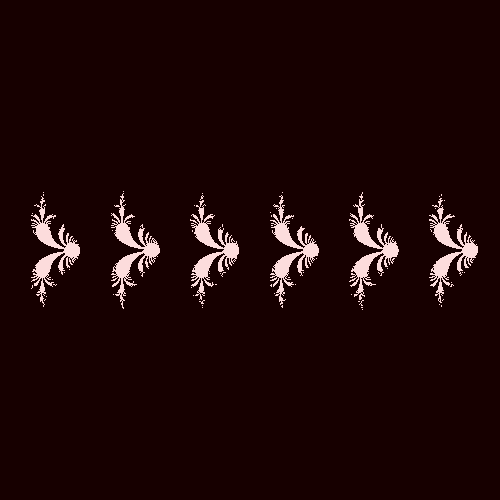}
    \caption{The weak Julia/Fatou set for $\mu = 1-e$ and $\lambda = 1$. The domain is graphed over $0 \le \Re(s) \le 2 \pi$ and $0 \le \Im(s) \le 2\pi$. The corresponding inverse Abel function is shown in Figure \ref{fig:INV_ABEL_1_E}.}
    \label{fig:JUL_1_E}
\end{figure}

\begin{figure}
    \centering
    \includegraphics[scale=0.4]{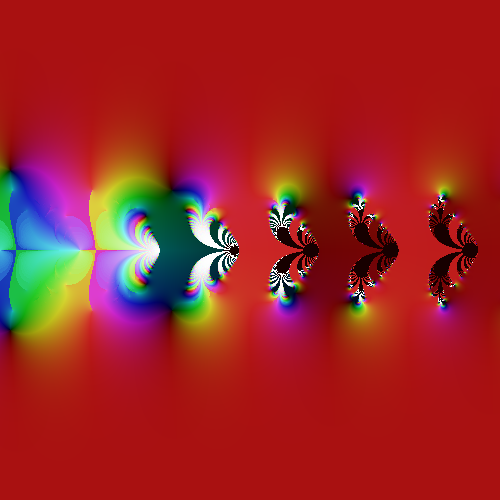}
    \caption{The function $\beta_{\lambda,\mu}(s) + \tau_{\lambda,\mu}(s)$ for $\lambda =1$ and $\mu = 1-e$. This is graphed over the domains $0 \le \Re(s) \le 2\pi$ and $0 \le \Im(s) \le 2\pi$. The corresponding weak Julia set is found in Figure \ref{fig:JUL_1_E}.}
    \label{fig:INV_ABEL_1_E}
\end{figure}

\begin{figure}
    \centering
    \includegraphics[scale=0.4]{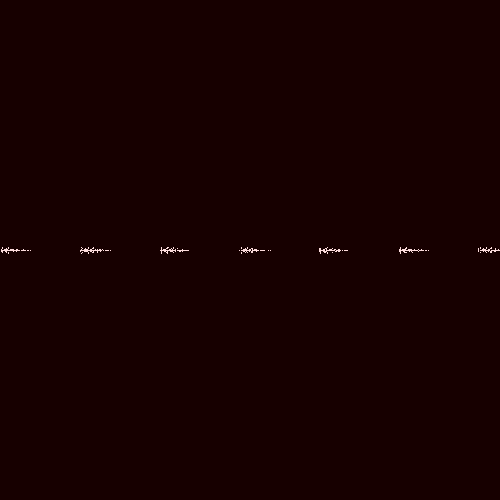}
    \caption{The weak Julia/Fatou set for $\mu = \ln(3)/3$ and $\lambda = 1$. The domain is graphed over $0 \le \Re(s) \le 2 \pi$ and $0 \le \Im(s) \le 2\pi$. The corresponding inverse Abel function is shown in Figure \ref{fig:INV_ABEL_CUBE_3}.}
    \label{fig:JUL_CUBE_3}
\end{figure}

\begin{figure}
    \centering
    \includegraphics[scale=0.4]{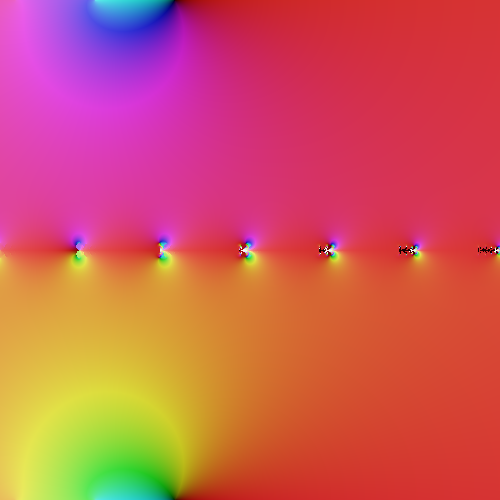}
    \caption{The function $\beta_{\lambda,\mu}(s) + \tau_{\lambda,\mu}(s)$ for $\lambda =1$ and $\mu = \ln(3)/3$. This is graphed over the domains $0 \le \Re(s) \le 2\pi$ and $0 \le \Im(s) \le 2\pi$. The corresponding weak Julia set is found in Figure \ref{fig:JUL_CUBE_3}.}
    \label{fig:INV_ABEL_CUBE_3}
\end{figure}

\begin{figure}
    \centering
    \includegraphics[scale=0.4]{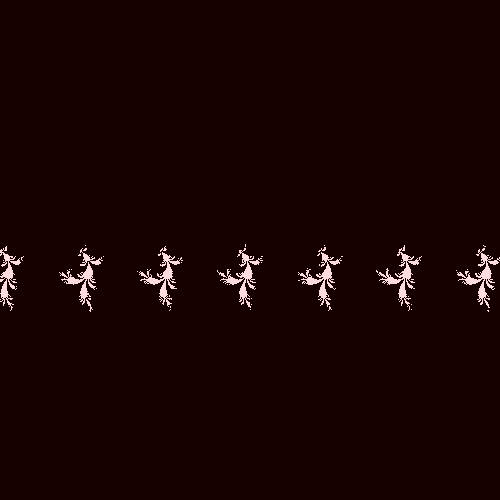}
    \caption{The weak Julia/Fatou set for $\mu = \ln(3)/3 - 0.5i$ and $\lambda = 1$. The domain is graphed over $0 \le \Re(s) \le 2 \pi$ and $0 \le \Im(s) \le 2\pi$. The corresponding inverse Abel function is shown in Figure \ref{fig:INV_ABEL_CUBE_3_MINUS_POINTFIVE_I}.}
    \label{fig:JUL_CUBE_3_MINUS_POINTFIVE_I}
\end{figure}

\begin{figure}
    \centering
    \includegraphics[scale=0.4]{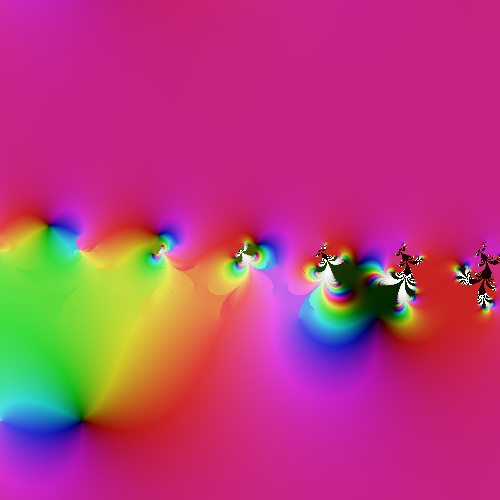}
    \caption{The function $\beta_{\lambda,\mu}(s) + \tau_{\lambda,\mu}(s)$ for $\lambda =1$ and $\mu = \ln(3)/3-0.5i$. This is graphed over the domains $0 \le \Re(s) \le 2\pi$ and $0 \le \Im(s) \le 2\pi$. The corresponding weak Julia set is found in Figure \ref{fig:JUL_CUBE_3_MINUS_POINTFIVE_I}.}
    \label{fig:INV_ABEL_CUBE_3_MINUS_POINTFIVE_I}
\end{figure}

\begin{figure}
    \centering
    \includegraphics[scale=0.4]{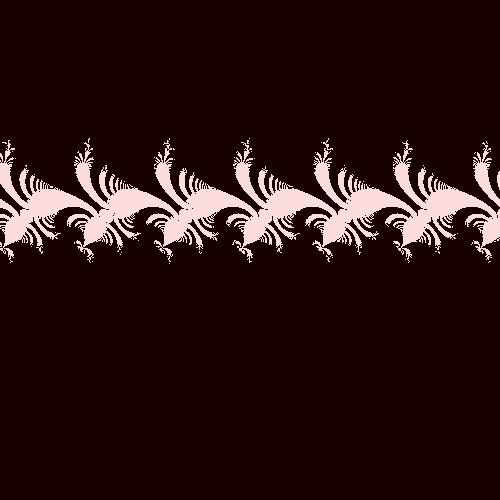}
    \caption{The weak Julia/Fatou set for $\mu = 0.3 + i$ and $\lambda = 1$. The domain is graphed over $0 \le \Re(s) \le 2 \pi$ and $0 \le \Im(s) \le 2\pi$.}
    \label{fig:JUL_point_3_plus_I}
\end{figure}

\begin{figure}
    \centering
    \includegraphics[scale=0.4]{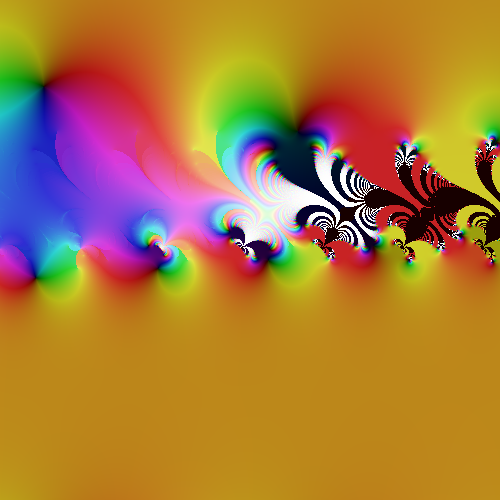}
    \caption{The function $\beta_{\lambda,\mu}(s) + \tau_{\lambda,\mu}(s)$ for $\lambda =1$ and $\mu = 0.3+i$. This is graphed over the domains $0 \le \Re(s) \le 2\pi$ and $0 \le \Im(s) \le 2\pi$. The corresponding weak Julia set is found in Figure \ref{fig:JUL_point_3_plus_I}.}
    \label{fig:INV_ABEL_POINT_3_PLUS_I}
\end{figure}

\begin{figure}
    \centering
    \includegraphics[scale=0.4]{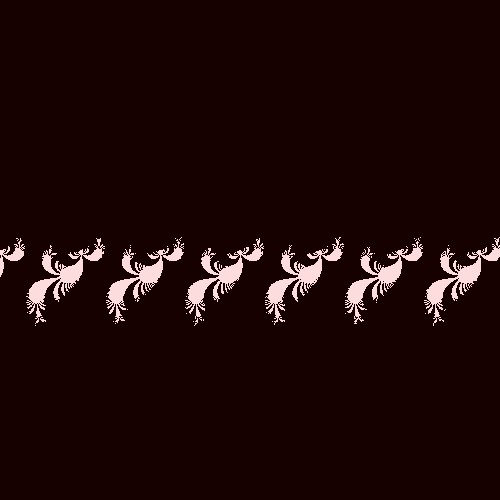}
    \caption{The weak Julia/Fatou set for $\mu = -0.3-i$ and $\lambda = 1$. The domain is graphed over $0 \le \Re(s) \le 2 \pi$ and $0 \le \Im(s) \le 2\pi$. The corresponding inverse Abel function is shown in Figure \ref{fig:INV_ABEL_NEGATIVE_POINT_3_MINUS_I}.}
    \label{fig:JUL_NEGATIVE_POINT_3_MINUS_I}
\end{figure}

\begin{figure}
    \centering
    \includegraphics[scale=0.4]{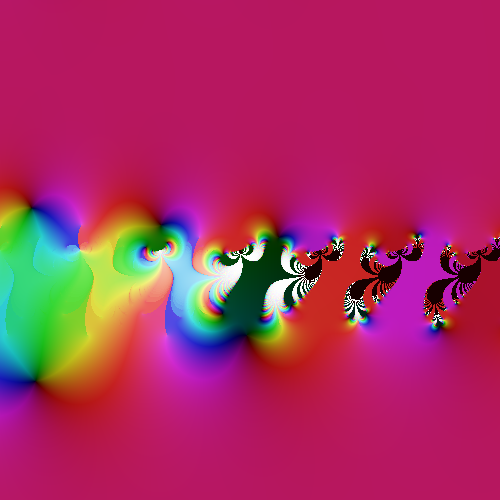}
    \caption{The function $\beta_{\lambda,\mu}(s) + \tau_{\lambda,\mu}(s)$ for $\lambda =1$ and $\mu = -0.3-i$. This is graphed over the domains $0 \le \Re(s) \le 2\pi$ and $0 \le \Im(s) \le 2\pi$. The corresponding weak Julia set is found in Figure \ref{fig:JUL_NEGATIVE_POINT_3_MINUS_I}.}
    \label{fig:INV_ABEL_NEGATIVE_POINT_3_MINUS_I}
\end{figure}

\chapter{Asymptotics on the weak Julia set}

\section{Asymptotics on the weak Julia set}\label{sec18}
\setcounter{equation}{0}

This chapter will be devoted to describing the best result we are able to attain on the weak Julia set. This result is no better epitomized than in the case $\mu = 1$ ($b = e$) and $\lambda =1$. This case was handled in great detail by Sheldon Levenstein and myself. The reason this case is so interesting is because $\mathcal{B}  = \mathbb{C}$. The weak Julia set is precisely the complex plane. The results of this section can be more clearly attributed to Sheldon Levenstein; as the author had mistaken a distinction with supremum norms; but much of the work was mutual.

The raw paradoxical nature of this result is that:

$$
\lim_{n\to\infty} \frac{1}{\beta(s+n)} = 0\,\,\text{for all}\,\,s \neq j + (2k+1)\pi i\,\,\text{for}\,\,j,k\in\mathbb{Z}\\
$$

But, in no way, shape, or form does this limit converge normally. In fact, for any $\mathcal{N} \subset \mathbb{C}$:

$$
\limsup_{n\to\infty} \left|\left|\frac{1}{\beta(s+n)}\right|\right|_{\mathcal{N}} = \infty\\
$$

This tells us, instantly, that:

$$
\lim_{n\to\infty}\tau^{n}(s) = \tau(s)\\
$$

Converges pointwise, but converges nowhere uniformly. To that note, the question we are addressing in this chapter is: how close does it come to converging uniformly? In doing so; we will be evaluating the asymptotic series which is induced.

To that end:

$$
F^n(s) = \beta(s) + \tau^n(s) = \sum_{k=0}^\infty a_k^n(s_0)(s-s_0)^k\\
$$

Where, directly:

\begin{align*}
\frac{1}{\limsup_{k\to \infty}|a_k^n(s_0)|^{1/k}} &= R^n(s_0)\\
\lim_{n\to\infty} R^n(s_0) &= 0\\
\end{align*}

Proving this absolutely for $s_0 \in \mathcal{B}$ (in the weak Julia set), will be our first goal; where the second goal is to attempt at deriving how this converges. This requires a rabbit hole of analysing the summative term $\rho^j$.

To get the hang of this case; we're going to make a change of variables in $z$. This means, we aren't going to look at $\bullet z$, but rather an alteration of this expression. And the first thing we're going to show in this chapter, is that this change of variables is valid:

$$
\frac{1}{\beta(s)} = \OmSum_{j=1}^\infty (1+e^{\lambda(j-s)})e^{ -\frac{\mu}{z}}\bullet z\\
$$

This isn't as foreign a result as it looks. Letting $I(z) = 1/z$, then:

$$
I\left(\frac{e^{\mu I(z)}}{1+e^{-\lambda s}}\right) = \left(1+e^{-\lambda s}\right)e^{-\mu/z}\\
$$

Setting:

$$
\tilde{q}_j(s,z) = I\left(\frac{e^{\mu I(z)}}{1+e^{\lambda (j- s)}}\right)
$$

And keeping with our previous notation:

$$
q_j(s,z) = \frac{e^{\mu z}}{1+e^{\lambda (j- s)}}\\
$$

So,

$$
I(q_j(s,I(z))) = \tilde{q}_j(s,z)\\
$$

To simplify the following; we'll write it in the bullet notation:

$$
I \bullet q_j(s) \bullet I \bullet z = \tilde{q}_j(s) \bullet z\\
$$

Attaching an $\OmSum_{j=1}^\infty$:

$$
h(s) = \OmSum_{j=1}^\infty\tilde{q}_j(s) \bullet z =\OmSum_{j=1}^\infty I \bullet q_j(s) \bullet I \bullet z\\
$$

Now, $I \bullet I = 1$; so the above identity is a conjugation. Which, is to say; we can group terms as such:

$$
I \bullet q_j(s) \bullet I \bullet I \bullet q_{j+1}(s)\bullet I \bullet z = I \bullet q_j(s) \bullet q_{j+1}(s) \bullet I \bullet z\\
$$

So when continuing this relationship:

$$
h(s) = I\left(\OmSum_{j=1}^\infty{q}_j(s) \bullet I \bullet z\right) = 1/\beta(s)\\
$$

Which, because, if we choose $z \in \mathbb{C}/\{0\}$, then $I(z) \in \mathbb{C}/\{0\}$--and convergence will be constant in $z$; so the $I$ on the inside disappears (so to speak). Now, the idea of this change of variables, is to rule out the following possibility:

$$
\limsup_{k\to\infty}||h(s+k)||_{\mathcal{N}} =0\\
$$

Which is, almost directly apparent, because it implies $h(s+k) \to 0$ and:

$$
h(s+1) \sim e^{-\mu/h(s)}\\
$$

And the essential singularity at $e^{-\mu/h(\infty)}$ if $h\to 0$ is incompatible with this asymptotic. But we want to accomplish a stronger statement of this fact. Which is that, if $h(s+k) \to 0$ as $k\to\infty$, then $s \in \mathcal{B}$. This reduces into a strong classification of the weak Fatou/Julia set. We summarize in the next theorem.

\begin{theorem}[The weak Fatou/Julia distinction Theorem]\label{thmWDSTNC}
Let $h(s) = 1/\beta(s)$, let $\mathcal{P} = \mathbb{C}/\mathcal{B}$ be the weak Fatou set of $\beta$, and let $\mathcal{B} = \mathbb{C}/\mathcal{P}$ be the weak Julia set. If,

$$
\lim_{k\to\infty} h(s_0+k) = 0,\infty
$$

Then $s_0 \in \mathcal{B}$. If,

$$
0 < \delta < |h(s_0+k)| < D < \infty\\
$$

For all $k > K$ for some $0 < \delta < D$, then $s_0 \in \mathcal{P}$.
\end{theorem}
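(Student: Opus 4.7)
The theorem gives two sufficient conditions, which I address separately. The case $h(s_0+k) \to \infty$ is immediate: for any neighborhood $\mathcal{N}$ of $s_0$, $||h(\cdot+k)||_{\mathcal{N}} \ge |h(s_0+k)| \to \infty$, so by Definition \ref{defWJUL} we have $s_0 \in \mathcal{B}$.

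For the case $h(s_0+k) \to 0$, I argue by contradiction. Assume $s_0 \in \mathcal{P}$, so some neighborhood $\mathcal{N}$ of $s_0$ satisfies $\limsup_k ||h(\cdot+k)||_{\mathcal{N}} \le M < \infty$. By Montel's theorem $\{h(\cdot+k)\}$ is normal on $\mathcal{N}$, and a subsequence $h(\cdot+k_n)$ converges uniformly on compact subsets to some holomorphic $H$ with $H(s_0)=0$. Assuming $H \not\equiv 0$, we have $H(s) = c(s-s_0)^m + \cdots$ with $c\ne 0$, $m\ge 1$, near $s_0$. On a small circle $|s-s_0|=r$, $|h(s+k_n)| \approx |c|r^m$ for large $n$ (by Hurwitz), and the argument of $\mu/h(s+k_n)$ sweeps through every direction as $s$ traverses the circle. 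Hence some $s^*$ on the circle satisfies $-\Re(\mu/h(s^*+k_n)) \approx |\mu|/(|c|r^m)$, and the functional equation $h(s+k+1) = (1+e^{-\lambda(s+k)}) e^{-\mu/h(s+k)}$ yields
\[ |h(s^*+k_n+1)| \gtrsim e^{|\mu|/(|c|r^m)}. \]
Taking $r = r_n \to 0$ as $n\to\infty$, the right-hand side tends to $\infty$, so $||h(\cdot+k_n+1)||_{\mathcal{N}} \to \infty$, contradicting $s_0 \in \mathcal{P}$. The subcase $H \equiv 0$ is handled analogously, replacing Hurwitz with the open mapping theorem applied to the non-constant holomorphic function $h(\cdot+k_n)$ to locate points of varied argument near $0$.

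For Part 2, assume $\delta < |h(s_0+k)| < D$ for $k > K$; I want to show $\{h(\cdot+k)\}$ is uniformly bounded on some neighborhood of $s_0$. The idea is to propagate the pointwise bound at $s_0$ via the functional equation. First, pick a neighborhood $\mathcal{N}_K$ on which $h(\cdot+K)$ lies in the enlarged annulus $\{\delta/2 \le |z| \le 2D\}$, using continuity at $s_0$. The map $\phi_k(z) = (1+e^{-\lambda(s_0+k)}) e^{-\mu/z}$ is holomorphic on this annulus with derivative bounded (eventually uniformly in $k$, since $e^{-\lambda(s_0+k)} \to 0$) by $\sim |\mu|D/\delta^2$ at the orbit point. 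Differentiating the functional equation in $s$ gives a recursion for $h'(s_0+k)$; combined with the fact that the orbit $h(s_0+k)$ itself stays inside $(\delta,D)$ for all $k$, a Montel/equicontinuity argument on the family $\{h(\cdot+k)\}_{k>K}$ shows it is normal at $s_0$. Normality together with the pointwise bound upgrades to a uniform bound on a neighborhood, giving $s_0 \in \mathcal{P}$.

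\textbf{Main obstacle.} The residual subcase $H \equiv 0$ in Part 1 cannot use Hurwitz to find zeros of $h(\cdot+k_n)$; one must use the open mapping theorem quantitatively --- controlling $|h'/h| = |\beta'/\beta|$ --- to show $\arg(h(s+k_n))$ still varies enough across small neighborhoods to produce an $s^*$ with large $-\Re(\mu/h(s^*+k_n))$. In Part 2, the naive multiplier $|\mu|D/\delta^2$ can exceed $1$, so iterating Lipschitz bounds on $\phi_k$ directly fails; one must exploit that the orbit \emph{recurs} into the compact annulus $[\delta,D]$ for all $k$ (rather than treating the iteration as a contraction) to keep the derivative growth in check.
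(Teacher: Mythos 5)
Your route is genuinely different from the paper's, and the comparison is instructive because the two proofs have complementary weak spots. In the $h(s_0+k)\to 0$ case you extract a subsequential Montel limit $H$ of $h(\cdot+k)$ and use Hurwitz winding on a small circle about $s_0$ to find a point $s^*$ where $-\Re(\mu/h(s^*+k_n))$ is huge, whence one more application of the functional equation explodes the supremum norm. The paper instead sets $H(z)=e^{-\mu/z}$ and iterates the model map: from $\|h(\cdot+k+1)-H(h(\cdot+k))\|_{\mathcal{N}}\to 0$ it passes to the $n$-step estimate $\|h(\cdot+k+n)-H^{\circ n}(h(\cdot+k))\|_{\mathcal{N}}\to 0$ and appeals to the essential singularity of $H^{\circ n}$ near $z=0$. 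The $H\equiv 0$ subcase you flag as your main obstacle is exactly where the paper's one-step-to-$n$-step upgrade is not licensed either: the implicit Lipschitz control of $H$ degenerates as $|z|\to 0$, which is the regime in play. So the residual case is a genuine gap in both routes, and your diagnosis of it (you would need quantitative control of $\arg h$ via $\beta'/\beta$) is a candidate repair that the paper does not attempt.

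For the bounded-annulus case the paper's strategy is different from yours in a way that directly addresses the obstacle you name. Rather than iterating worst-case Lipschitz bounds over the annulus with multiplier $|\mu|D/\delta^2$ (which, as you note, can exceed $1$), the paper argues that $h(s_0+k)\to\ell$ with $H(\ell)=\ell$ and $|H'(\ell)|\le 1$, and then uses normality of the family $\{H^{\circ n}\}$ on the Fatou component of $\ell$ to obtain the uniform bound on a neighborhood. In other words, it localizes the multiplier at the actual fixed point rather than taking a supremum over the annulus, which is precisely the mechanism your equicontinuity sketch lacks; your Part~2, as written, does not overcome the difficulty you acknowledge. Be aware, though, that the paper's own step here is soft: the annulus hypothesis alone does not force $h(s_0+k)$ to converge (it could oscillate), and when $|H'(\ell)|=1$ the point $\ell$ need not lie in the Fatou set of $H$ at all, so invoking ``normality of $\{H^{\circ n}\}$ near $\ell$'' is not automatic. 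The takeaway is that your decomposition is correct and your honesty about the gaps is accurate, but closing Part~2 requires the fixed-point localization idea from the paper, and even then an extra argument is needed to rule out oscillation and the Cremer case.
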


\begin{proof}
This theorem is a tad in depth, as we have to handle a fair amount of cases. To begin, let us assume that:

$$
\lim_{k\to\infty} h(s_0+k) = 0\\
$$

We want to show that,

$$
\limsup_{k\to\infty} ||h(s+k)||_{\mathcal{N}} = \infty\\
$$

For any arbitrary neighborhood $\mathcal{N}$ about $s_0$. Pick $K$ large enough so that $|h(s_0+k)| < \epsilon$ for arbitrarily small $\epsilon$ when $k>K$. Pick an arbitrary neighborhood $\mathcal{N}$ about $s_0$. We are going to use the asymptotic:

$$
\lim_{k\to\infty} \left|\left|h(s+k+1)e^{\mu/h(s+k)} - 1\right|\right|_{\mathcal{N}} = 0
$$

Which tends to zero geometrically, simply by $h$'s functional equation:

$$
h(s+1) = (1+e^{-\lambda s}) e^{-\mu/h(s)}\\
$$

Now, by cases, assume that:

$$
\lim_{k\to\infty} \left|\left|h(s+k+1) - e^{-\mu/h(s+k)}\right|\right|_{\mathcal{N}} = 0
$$

If it doesn't tend to zero, by contradiction we are done, as it implies $\left|\left |e^{-\mu/h(s+k)}\right|\right|_{\mathcal{N}}\to \infty$; which is the statement of $s_0 \in \mathcal{B}$. Thereby, if we call $H(z) = e^{-\mu/z}$, we are given the stronger asymptotic:

$$
\lim_{k\to\infty} \left|\left|h(s+k+n) - H^{\circ n}(h(s+k))\right|\right|_{\mathcal{N}} = 0
$$

Choose $n=n_k$ such that we can limit $n\to\infty$ as $k \to \infty$ and the above limit remains arbitrarily small, then, by the triangle inequality:

$$
\left|\left|h(s+k+n)\right|\right|_{\mathcal{N}} \ge \left|\left|H^{\circ n}(h(s+k))\right|\right|_{\mathcal{N}}-\left|\left|h(s+k+n) - H^{\circ n}(h(s+k))\right|\right|_{\mathcal{N}}
$$

The iterate $H^{\circ n}(z)$ does not converge in a neighborhood of $z\approx 0$--there's an essential singularity; whereby its supremum norm gets arbitrarily large. Thereby showing that $s_0 \in \mathcal{B}$.\\

Now assume that $h(s_0+k) \to \infty$; well, any supremum norm about this gets arbitrarily large, so the limit superior will tend to infinity, thereby $s_0 \in \mathcal{B}$.\\

To show that the direct opposite of these statements imply we are in the Fatou set, is equally as formulaic to show. We want to derive that since we are bounded away from zero and infinity; we must be weakly normal. Again we will start from the asymptotic,

$$
\lim_{k\to\infty} \left|\left|h(s+k+1)e^{\mu/h(s+k)} - 1\right|\right|_{\mathcal{N}} = 0
$$

Now, additionally, we must have:

$$
h(s_0+k+1) \to e^{-\mu/h(s_0+k)}\,\,\text{as}\,\,k\to\infty\\
$$

For the point $s_0$. Thereby,

$$
\left|\left| h(s+k+1) - e^{-\mu/h(s+k)}\right|\right|_{\mathcal{N}_k} <\epsilon_k
$$

For a shrinking sequences of $\epsilon_k$ and a decreasing sequence of neighborhoods $\mathcal{N}_k$. Thereby,

$$
h(s_0+k) \to \ell\\
$$

In which,

$$
H(\ell) = e^{-\mu/\ell} = \ell\\
$$

Furthermore, we must have that this fixed point is attracting or at worse, neutral. Which means $|H'(\ell)| \le 1$ because the iterate $H^{n}(h(s_0+k)) \to \ell$, and this can't happen when the multiplier $H'(\ell) > 1$.

Now, as $H^{\circ n}(h(s_0+k)) \sim h(s_0+k+n) \to \ell$, and this is part of a Fatou set of $H$, there must be an open neighborhood in $s_0$ in which $|H^{\circ n}(h(s+k)) - \ell| < \epsilon$. Thereby, calling this neighborhood $\mathcal{N}$:

$$
\limsup_{k\to\infty} ||h(s+k)||_{\mathcal{N}} < \infty\\
$$
\end{proof}

From this, we achieve the corollary:

\begin{corollary}[Levenstein's Counter Example]

Letting $\mu =1$ and $\lambda=1$; where:

$$
\beta(s) = \OmSum_{j=1}^\infty \frac{e^{z}}{1+e^{j-s}}\bullet z\\
$$

Then $\mathbb{R} \subset \mathcal{B}$; the real-line is in the weak Julia set.
\end{corollary}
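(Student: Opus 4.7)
By Theorem \ref{thmWDSTNC}, it suffices to show that $h(s_0+k) = 1/\beta(s_0+k) \to 0$ as $k \to \infty$ for every $s_0 \in \mathbb{R}$, i.e.\ that $\beta(s_0+k) \to +\infty$ along the real axis. With $\mu = \lambda = 1$, the functional equation becomes $\beta(s+1) = e^{\beta(s)}/(1+e^{-s})$, so it suffices to analyse this real recursion.

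First I would record positivity on the real line: each map $q_j(s,z) = e^{z}/(1+e^{j-s})$ carries $(0,\infty)$ into $(0,\infty)$ for $s \in \mathbb{R}$, so every finite partial composition is a positive real; hence $\beta(\mathbb{R}) \subset [0,\infty)$. Strict positivity then follows from $\beta(s+1)(1+e^{-s}) = e^{\beta(s)} \ge 1$.

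Next, fix $s_0 \in \mathbb{R}$ and write $y_k := \beta(s_0+k)$, so that
\[
y_{k+1} = \frac{e^{y_k}}{1+e^{-(s_0+k)}}.
\]
Choose $K_0$ with $s_0+K_0 \ge 0$; for $k \ge K_0$ the denominator lies in $(1,2]$, and hence $y_{k+1} \ge e^{y_k}/2$. The elementary inequality $e^y > 2y$ for all real $y$ (the function $e^y - 2y$ attains its global minimum $2 - 2\ln 2 > 0$ at $y = \ln 2$) gives $y_{k+1} > y_k$, so $(y_k)_{k \ge K_0}$ is strictly increasing. The sharper bound
\[
y_{k+1} - y_k \ge \frac{e^{y_k}}{2} - y_k \ge 1 - \ln 2 > 0
\]
rules out boundedness of the sequence. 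Hence $y_k \to +\infty$, whence $h(s_0+k) \to 0$ and $s_0 \in \mathcal{B}$ by Theorem \ref{thmWDSTNC}.

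The entire argument reduces to a short real-variable calculation on a scalar recursion; the only point to verify with any care is that the factor $1/(1+e^{-(s_0+k)})$ stabilises above $1/2$ once $k$ exceeds $-s_0$, after which the strictly positive gap $e^{y}/2 - y \ge 1 - \ln 2$ drives the orbit off to infinity. No topological or dynamical input beyond Theorem \ref{thmWDSTNC} is needed; the content of the corollary is conceptual rather than technical, namely that this very clean pointwise blowup on all of $\mathbb{R}$ places the entire real axis inside the weak Julia set rather than the weak Fatou set.
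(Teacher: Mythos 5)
Your proposal is correct and takes the same route as the paper: establish that $\beta(\mathbb{R}) \subset \mathbb{R}^+$, use the real recursion $\beta(s+1) = e^{\beta(s)}/(1+e^{-s})$ to force $\beta(s_0+k) \to +\infty$, and conclude via Theorem~\ref{thmWDSTNC}. The paper simply asserts that the recursion ``forces a super exponential divergence to infinity,'' whereas you supply the missing real-variable lemma ($e^y/2 - y \ge 1 - \ln 2 > 0$) that makes the blow-up rigorous; this is a welcome tightening but not a different argument.
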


\begin{proof}
The function $\beta(x) : \mathbb{R} \to \mathbb{R}^+$. And $\beta(x+1) \sim e^{\beta(x)}$, which forces a super exponential divergence to infinity. Therefore, each point $x_0 \in \mathbb{R}$, satisfies:

$$
\frac{1}{\beta(x_0+k)} \to 0 \,\,\text{as}\,\,k\to\infty\\
$$

Therefore, $x_0\in \mathbb{R}$ is in the weak Julia set.
\end{proof}

\section{Non-uniformity on the weak Julia set; an example model}\label{sec19}
\setcounter{equation}{0}

This section will focus exactly on $\lambda =1$ and $\mu =1$ and specifically on $x \in \mathbb{R}^+$. To begin, we call upon a quick graph in Figure \ref{fig:Scr_beta}. We want to show that, because $\mathbb{R}^+ \subset \mathcal{B}$, this inverse Abel function is not holomorphic. Now, each derivative exists; because each term is holomorphic; and the convergence is so fast we can derive uniformity for all derivatives. But only on $\mathbb{R}^+$. So the inverse Abel function in Figure \ref{fig:Scr_beta} is not analytic, but it is smooth.

This poses a very difficult challenge. As, proving this inverse Abel function is smooth is constructive; proving the Taylor series diverges at each point is more difficult. This becomes not a problem of proving pointwise convergence, or proving uniform convergence, or proving normal convergence (uniform on compact subsets in $\mathbb{C}$). We have to prove the buck stops at normal convergence. And, it does so, precisely because $\mathbb{R}^+ \subset \mathcal{B}$.

\begin{figure}
    \centering
    \includegraphics[scale =0.4]{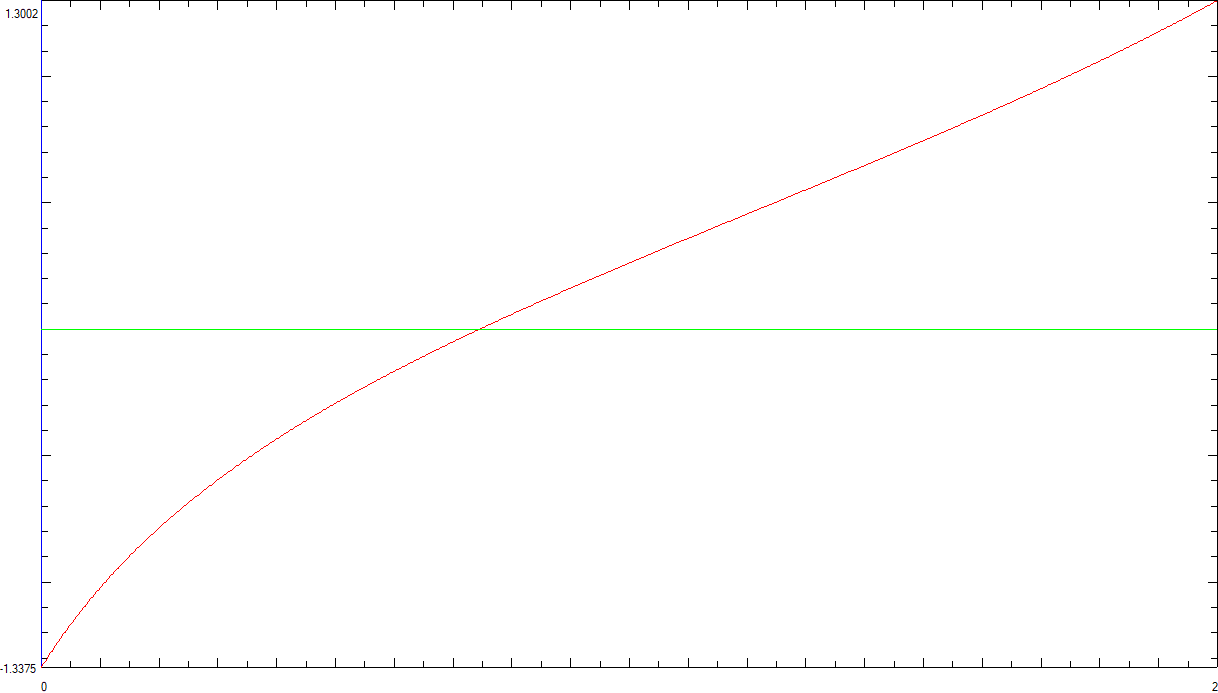}
    \caption{The red line is the inverse Abel function of $\mu =1$ and $\lambda=1$; $f(x) = \beta(x) + \tau(x)$. Done over the region $0 \le x \le 2$. The green line is the error between $f(x+1) = e^{f(x)}$. }
    \label{fig:Scr_beta}
\end{figure}

The example of $\mu =1$ and $\lambda =1$ when $x \in \mathbb{R}^+$ is intended to highlight the weak Julia set in general. There exists no holomorphic inverse Abel function on the weak Julia set (derived from $\beta$; we mean). This is a nit picky result; as we're proving the inexistence of a property; and that can always be tricky.

Commencing, pick an $x_0 \in \mathbb{R}^+$ and an arbitrary neighborhood $\mathcal{N} \subset \mathbb{C}$ about $x_0$. Then:

$$
\tau^{n+1}(x_0 +k) = \log\left(1+\frac{\tau^{n}(x_0+k+1)}{\beta(x_0+k+1)}\right) - \log(1+e^{-(x_0+k)})
$$

By the Asymptotic Theorem of the second kind \ref{thmASYM2}; for each $n \in \mathbb{N}$, as $k \to \infty$, $\tau^n(x_0+k) \to 0$ and $||\tau^n(s+k)||\to0$. Thereby, if it were to converge uniformly on $s \in \mathcal{N}$ (a neighborhood of $x_0$); we'd require that:

$$
\lim_{k \to \infty} \tau(s+k) = 0\\
$$

This in turn requires that:

\begin{align*}
    0=\lim_{k\to \infty} \tau(s+k) &= \lim_{k \to \infty}\log\left(1+\frac{\tau(s+k+1)}{\beta(s+k+1)}\right) - \log(1+e^{-(s+k)})\\
    &= \lim_{k \to \infty}\log\left(1+\frac{\tau(s+k+1)}{\beta(s+k+1)}\right)\\
    0 &= \lim_{k \to \infty}\frac{\tau(s+k+1)}{\beta(s+k+1)}\\
\end{align*}

Therefore, we can reduce to the linear case, and:

$$
\tau^{n+1}(s+k) \sim -\sum_{j=0}^{n} \frac{e^{-(s+j+k)}}{\prod_{c=1}^j \beta(s+k+c)}\\
$$

The idea of the proof to follow, is that:

$$
\left|\left|\sum_{j=0}^{n} \frac{e^{-(s+j)}}{\prod_{c=1}^j \beta(s+c)}\right|\right|_{\mathcal{N}} = \infty\\
$$

Despite the fact that it converges everywhere $x_0 \in \mathbb{R}^+$; this converges no where normally. This is quite a fascinating result, that requires a rather obvious trick, that once pointed out, is basic arithmetic.

The value:

$$
\left|\left|\frac{1}{\beta(s+c)}\right|\right|_{\mathcal{N}} \ge R > e^{2}\\
$$

For large enough $c$. Therefore, the series cannot converge. We fill in the details in the proof below.

\begin{theorem}[Levenstein's Theorem]\label{thmLVN}
Let $\mu =1$ and $\lambda =1$. The process $\tau^n(s)$ does not converge uniformly in any neighborhood of a point $x_0 \in \mathbb{R}^+$.
\end{theorem}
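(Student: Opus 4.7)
The plan is to argue by contradiction. Suppose $\tau^n$ converges uniformly on some open neighborhood $\mathcal{N} \subset \mathbb{C}$ of $x_0 \in \mathbb{R}^+$ to a limit $\tau$. Then $\tau$ is holomorphic on $\mathcal{N}$, and $F(s) = \beta(s) + \tau(s)$ satisfies $F(s+1) = e^{F(s)}$ on $\mathcal{N}$ (and by analytic continuation on each shift $\mathcal{N}+k$, $k \ge 0$). Combining uniform convergence with the Asymptotic Theorem of the second kind \ref{thmASYM2}, I would first deduce that $\|\tau(s+k)\|_{\mathcal{N}} \to 0$ as $k\to\infty$. The reason: $\tau^n(s+k) \to 0$ compactly for each fixed $n$, and passage to the limit in $n$ is justified by uniform convergence, so the limit function inherits the asymptotic.

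Next I would pass to the linearization. For $k$ sufficiently large, $|\tau(s+k+1)/\beta(s+k+1)|$ is arbitrarily small on $\mathcal{N}$ (since on $\mathbb{R}^+$, $\beta(x_0+k) \to \infty$ super-exponentially, and continuity plus the uniform estimate on $\tau$ control the numerator). Taylor expanding
\[
\tau(s+k) = \log\!\bigl(1 + \tfrac{\tau(s+k+1)}{\beta(s+k+1)}\bigr) - \log(1+e^{-(s+k)})
\]
gives, to leading order,
\[
\tau(s+k) = \frac{\tau(s+k+1)}{\beta(s+k+1)} - e^{-(s+k)} + \text{(smaller)}.
\]
Iterating this recursion $n$ steps and sending $n \to \infty$ yields the asymptotic series
\[
\tau(s+k) \sim -\sum_{j=0}^\infty \frac{e^{-(s+k+j)}}{\prod_{c=1}^j \beta(s+k+c)},
\]
which, under the uniform convergence hypothesis, must converge uniformly on $\mathcal{N}$.

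The final step is to extract a contradiction from the fact that $x_0 \in \mathcal{B}$. By the weak Fatou/Julia distinction Theorem \ref{thmWDSTNC}, for every neighborhood $\mathcal{N}$ of $x_0$ one has $\limsup_{k \to \infty} \|1/\beta(s+k)\|_{\mathcal{N}} = \infty$. I would leverage the relation $\beta(s+1) \sim e^{\beta(s)}$: on $\mathbb{R}^+$, $\beta$ grows like an iterated exponential, so the complex derivative of $\beta(\,\cdot\,+k)$ on $\mathcal{N}$ blows up doubly exponentially in $k$, and by Picard-type reasoning the image $\beta(\mathcal{N}+k)$ covers arbitrarily large parts of $\mathbb{C}$ and comes arbitrarily close to $0$. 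Concretely, I would inductively select $c_1 < c_2 < \cdots$ and points $s_j \in \mathcal{N}$ so that $|\beta(s_j + c_j)|$ is small enough to make the $j$-th term
\[
\left|\frac{e^{-(s_j+k+c_j)}}{\prod_{c=1}^{c_j} \beta(s_j+k+c)}\right|
\]
exceed any prescribed bound, forcing the sup-norm over $\mathcal{N}$ of some individual summand past $1$. Since the terms of a uniformly convergent series must tend to zero in sup norm, this contradicts the series representation derived above.

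The main obstacle is the sup-norm-versus-product issue: $\|1/\beta(s+c)\|_{\mathcal{N}} \ge R > e^2$ for large $c$ does not automatically yield a lower bound on $\|\prod_c 1/\beta(s+c)\|_{\mathcal{N}}$, because the points realizing large $|1/\beta(s+c)|$ may differ for each $c$. Overcoming this requires exploiting the super-exponential stretching to show that, within any neighborhood surviving from the previous stage, there are still preimages where $\beta$ is nearly zero at the next stage — essentially running a Picard-style covering argument inductively along the orbit.
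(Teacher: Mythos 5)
Your proposal follows the paper's route exactly: linearize the $\tau$ recursion using the Asymptotic Theorem of the second kind to obtain the telescoped sum
\[
\tau^{n+1}(s+k) \approx -\sum_{j=0}^n \frac{e^{-(s+j+k)}}{\prod_{c=1}^j \beta(s+k+c)},
\]
then use $x_0 \in \mathcal{B}$ to force $\|1/\beta(s+k+c)\|_{\mathcal{N}} \ge R > e^2$ for large $k+c$, and conclude that the partial sums stay bounded away from zero in sup norm, contradicting what uniform convergence (together with $\|\tau(s+k)\|_{\mathcal{N}} \to 0$) would require.

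The obstacle you flag at the end is not a worry you manufactured --- it is a genuine gap, and the paper's own proof has it too. The inequality $\|1/\beta(s+k+c)\|_{\mathcal{N}} \ge R$ for each $c$ does not yield $\|\prod_{c=1}^j 1/\beta(s+k+c)\|_{\mathcal{N}} \ge R^j$, because the supremum of a product is not the product of suprema. Indeed, on $\mathbb{R}^+$ itself the factors $1/\beta(x_0+k+c)$ are super-exponentially tiny, so the point of $\mathcal{N}$ realizing each factor's sup is a $c$-dependent point away from $x_0$, and nothing a priori guarantees the product is large at any single $s \in \mathcal{N}$. The paper nonetheless substitutes $R$ for each factor in its final estimate $\|\tau^n(s+n)\|_{\mathcal{N}} \approx |e^{-s}|\sum_j e^{-j} R^j e^{-n}$, silently commuting the sup over $\mathcal{N}$ with the product over $c$. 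Your proposed fix --- an inductive Picard-style covering that, at each stage, finds inside the surviving sub-neighborhood a smaller region on which the next factor is simultaneously large --- is exactly the kind of argument needed to close this step, and it is plausible given that forward orbits of any neighborhood of a Julia point are dense in the Julia set. But you have only sketched it, and the paper does not carry it out either. In short: your approach matches the paper's, and the one missing step you correctly identified is missing from the paper's proof as well.
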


\begin{proof}
Pick a point $x_0 \in \mathbb{R^+}$, and pick an arbitrary neighborhood $s \in \mathcal{N}$ about the point $x_0$. By The Asymptotic Theorem of the second kind \ref{thmASYM2} the following limit holds:

$$
\lim_{k\to\infty} ||\tau^n(s+k)||_{\mathcal{N}} = 0\\
$$

From which; by the functional equation:

$$
\tau^{n+1}(s+k) \sim \frac{\tau^{n}(s+k+1)}{\beta(s+k+1)} - e^{-(s+k)}\\
$$

Which equates, for our purposes, to the statement:

$$
\left|\left|\tau^{n+1}(s+k)-\frac{\tau^{n}(s+k+1)}{\beta(s+k+1)} + e^{-(s+k)}\right|\right|_{\mathcal{N}} \to 0\\
$$

Therefore:

$$
\left|\left|\tau^{n+1}(s+k)+\sum_{j=0}^n\frac{e^{-(s+j+k)}}{\prod_{c=1}^j \beta(s+k+c)}\right|\right|_{\mathcal{N}} = P^n(k)\\
$$

Where $P^n(k) \to 0$ as $k\to\infty$. And, here's the magic; we can choose $K$ arbitrarily large so that for $k+c>K$:

$$
\left|\left|\frac{1}{\beta(s+k+c)}\right|\right|_{\mathcal{N}} \ge R > e^{2}\\
$$

Because $\beta(s+k+c)$ gets arbitrarily close to zero; and since this works for all neighborhoods; there must be one for each $k$ and $c$ for large enough $k+c>K$. Which allows us to guess the divergence of $\tau$ by a factor of $R^n$. And therefore:

$$
\left|\left|\sum_{j=0}^n\frac{e^{-(s+j+k)}}{\prod_{c=1}^j \beta(s+k+c)}\right|\right|_{\mathcal{N}} 
$$

Can only converge if $\dfrac{e^{-j}}{\prod_{c=1}^j \beta(s+k+c)} \to 0$; which cannot happen because $R$ can be made arbitrarily large. Therefore:

$$
||\tau^n(s+k)||_{\mathcal{N}} \ge \left|\left|\sum_{j=0}^n\frac{e^{-(s+j+k)}}{\prod_{c=1}^j \beta(s+k+c)}\right|\right|_{\mathcal{N}} - P^n(k)\\
$$

Depending on how $P^n(k)$ behaves as $n\to\infty$ the result is shown because the series on the right diverges. We'll play a devilish trick here. Set $n=n(k)$; and let us grow $n$. There exists a value $n>N$ such that:

$$
0<P^n(k) < 1\\
$$

This actually completes the proof. But for perfection; force $R > e^{2}$, so we can pull out a factor of $R^{n}$; where we can now say: $e^{- n}R^{n} > e^n$. By which,

\begin{align*}
    ||\tau^{n}(s+n)||_{\mathcal{N}} &\approx |e^{-s}|\sum_{j=0}^{n-1} |e^{-j}|R^{j}e^{-n}\\
    &\approx M \sum_{j=0}^n e^{j}e^{-n}\\
    &\not\to 0\,\,\text{as}\,\,n\to\infty\\
\end{align*}

Because, as simple as it is $R >e^{2}$; this expression cannot tend to zero. This can't happen if $\tau^n(s+k) \to \tau(s+k)$ converges uniformly on $\mathcal{N}$ for any $k > K$ as $n\to\infty$.

Thereby, for $x > x_0$, and $k>K$ large enough; since every compact neighborhood diverges about $x_0$; the inverse Abel function:

$$
\beta(x) + \tau(x) = f(x):\mathbb{R}^+ \to \mathbb{R}\\
$$

Is nowhere analytic.
\end{proof}

To get a better idea of what's going on in this proof, we can paraphrase Levenstein's explanation. If we take:

\begin{align*}
    \beta(s-1) + s &\approx 2\pi i n\\
    \beta(s) &\approx \exp(\beta(s-1))\\
    \beta(s) & \approx \exp(-s)\\
\end{align*}

Now this approximation is arbitrarily close for large $\Re(s)$. By which, since $\tau(s) \approx -\exp(-s)$--which is arbitrarily close; and this approximation is very good; we can expect to find points in which $\beta(s) = -\tau(s)$--which causes a singularity at $\tau(s-1)$. These singularities will begin to cluster as we approach the real-line; but additionally, eventually appear everywhere.

The reason it happens more prominently for the real line; is because we have the fastest growth here; but near any orbit of $\beta(s) = 1$ we will have very similar results. And since the weak Julia set (see below) is the entire complex plane, we can expect it to happen without warning at any point.

In Figure \ref{fig:E_1} we have only employed a couple iterations (as more causes overflows); but attempting to grab Taylor series causes the radius of convergence to sink drastically with each successive iteration. This was explained beautifully by Sheldon Levenstein; and quite robustly; it will be included in the appendix, as it isn't necessary for proof.

\begin{figure}
    \centering
    \includegraphics[scale=0.5]{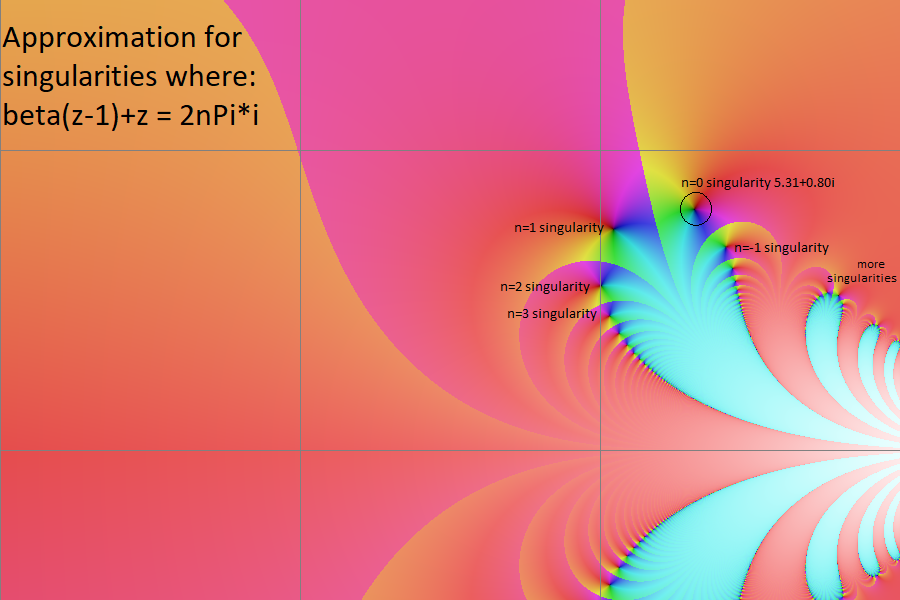}
    \caption{A graph of $\beta(z-1) +z + 2\pi i n$ (for $\mu = 1$ and $\lambda = 1$) where $n \in \mathbb{Z}$ is chosen to minimize the imaginary argument. Singularities are seen clustering towards $\mathbb{R}^+$. This graph is courtesy of Sheldon Levenstein.}
    \label{fig:Singularities}
\end{figure}

The author is going to put a little trip up at this point, which acts as a visual proof of a stronger statement of Levenstein's Theorem. We will draw the weak Julia set of $\mu =1$ and $\lambda = 1$; displayed in Figure \ref{fig:JULIA_E}. 

\begin{figure}
    \centering
    \includegraphics[scale = 0.3]{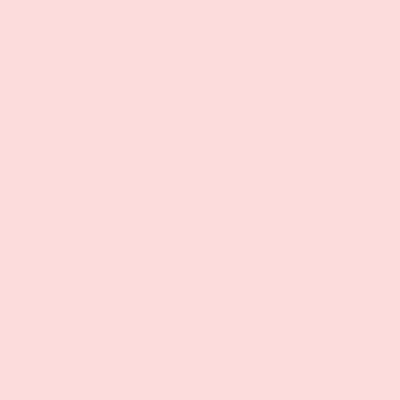}
    \caption{The weak Julia/Fatou sets of $\beta$ for $\mu =1$ and $\lambda = 1$. Mapped over a very large domain. It's all Julia set.}
    \label{fig:JULIA_E}
\end{figure}

\section{Non-uniformity on the weak Julia set}\label{sec20}
\setcounter{equation}{0}

This section is intended to generalize the results of the last section. We want to show that everywhere in the weak Julia set our iteration is nowhere holomorphic. This is significantly more difficult, but follows a similar layout as the previous section. The idea is to look for points:

$$
\beta(s-1) + \lambda s/\mu = 2\pi i n/\mu\\
$$

Which then:

$$
\beta(s) \approx \exp(\mu \beta(s-1)) \approx \exp(-\lambda s)\\
$$

Upon which, we employ that:

$$
\tau(s) \approx -\exp(-\lambda s)\\
$$

So that $\tau(s-1)$ is a singularity, because $\beta(s) + \tau(s) = 0$. The idea is to use a similar proof as in Levenstein's theorem, but we have to be more careful. We also would like to be more thorough, as this encompasses Levenstein's result; and is highly non-trivial. Where, Levenstein's theorem can be apparent just from programming in the beta method; the Taylor series will have awful convergence, and it worsens as you iterate further--which comes as a benefit of $\beta_{1,1}(x)$ growing super-exponentially and being real valued; and getting arbitrarily close to zero near the real-line.

We are going to begin by fixing a point $s_0 \in \mathcal{B}$ in the weak Julia set. We want to show, in every neighborhood of $s_0$, $\mathcal{N}$, there exists a $k>K$ and an $s_0^*$ such that:

$$
\beta(s_0^*+k) = - \tau(s_0^*+k)\\
$$

This result is a tad technical, but isn't too hard to discern if you understand what is going on. The point is that, 

$$
\liminf_{k\to\infty}\inf_{s\in \mathcal{N}}|\beta(s+k)| \to 0\\
$$

While at the same time, $\beta(s +k)$ gets arbitrarily large; thereby there must be a point somewhere nearby where $s_0^*$ causes $\beta(s_0^*+k) = -\tau(s_0^*+k)$. To do this, we need to only control $K$, and use The Asymptotic Theorem of the second kind \ref{thmASYM2}.

\begin{lemma}\label{lmaSing}
For all $s_0 \in \mathcal{B}$ (the weak Julia set), and for all neighborhoods $\mathcal{N}$ about $s_0$, there exists a value $k$ and a value $s_0^*$ such that:

$$
\beta(s_0^* + k) + \tau(s_0^* + k) = 0\\
$$

Or,

$$
\tau(s_0^* + k) = \infty\\
$$

Where $\infty$ is interpreted as non-convergence.
\end{lemma}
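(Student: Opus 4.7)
The plan is a contradiction argument. Suppose that for some neighborhood $\mathcal{N}$ of $s_0 \in \mathcal{B}$, both alternatives of the lemma fail: for every $s^* \in \mathcal{N}$ and every $k \in \mathbb{Z}$, the sequence $\tau^n(s^*+k)$ converges to a finite limit $\tau(s^*+k)$, and $F(s^*+k) = \beta(s^*+k) + \tau(s^*+k) \neq 0$. Then $F$ is a holomorphic, nonvanishing function on each translate $\mathcal{N}+k$ and satisfies $F(s+1) = e^{\mu F(s)}$.

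First I would invoke Definition \ref{defWJUL}: since $s_0 \in \mathcal{B}$, the condition $\limsup_{k\to\infty}||1/\beta(s+k)||_{\mathcal{N}} = \infty$ produces integers $k_j \to \infty$ and points $s_j \in \overline{\mathcal{N}}$ with $\beta(s_j+k_j) \to 0$. Combine this with the Asymptotic Theorem of the second kind (Theorem \ref{thmASYM2}), which gives $||\tau^n(s+k)||_{\mathcal{N}} \to 0$ as $k\to\infty$ for each fixed $n$. Under the finite-convergence hypothesis on $\tau$, this puts us in a regime where $\beta(s_j+k_j) \to 0$ while $\tau(s_j+k_j)$ is also small, yet by assumption the sum $\beta(s_j+k_j) + \tau(s_j+k_j)$ never vanishes.

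The heart of the argument exploits the recursion
\begin{equation*}
\tau^{n+1}(s) = \log\!\left(1 + \frac{\tau^n(s+1)}{\beta(s+1)}\right) - \log(1 + e^{-\lambda s}).
\end{equation*}
Evaluated at $s = s_j + k_j - 1$, the argument $1 + \tau^n(s_j+k_j)/\beta(s_j+k_j)$ must stay bounded away from $0$ uniformly in $n$; otherwise the logarithm drives $\tau^{n+1}(s_j+k_j-1)$ to $\infty$, realizing the second alternative and a contradiction. Passing to the limit in $n$, this forces a quantitative lower bound on $|F(s_j+k_j)/\beta(s_j+k_j)|$. I would then apply a Rouché / winding number argument to $F$ on a carefully chosen circle $\partial D_r \subset \mathcal{N}$ centered at $s_j$: for appropriate $r$, $|\tau|$ is dominated by $|\beta|$ on $\partial D_r$, so $F$ and $\beta$ have the same winding number about $0$; since $\beta(\cdot + k_j)$ is a nonconstant holomorphic function whose modulus attains an arbitrarily small interior minimum, the open mapping theorem forces $\beta$ to take values in a full neighborhood of $0$ inside $D_r$, and consequently $F$ to have a zero in $D_r$—the first alternative, again a contradiction.

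The main obstacle is making the Rouché comparison work in a regime where $|\tau|$ and $|\beta|$ are of comparable order at the critical points. The idea is to exploit the sharply different regularities of the two functions: $\beta(\cdot + k_j)$ oscillates wildly on $D_r$ because $s_0 \in \mathcal{B}$ (so its derivative is large there), whereas $\tau$ is controlled by the smooth recursion and inherits the gentler scale of the linearization heuristic $\tau \sim -e^{-\lambda s}/\mu$. Choosing $r$ slightly larger than the radius where $\beta$ attains its minimum, one should be able to arrange $|\beta(s+k_j)| > |\tau(s+k_j)|$ on $\partial D_r$. Quantifying this—bounding the growth of $\beta$ away from its minimum purely in terms of the weak Julia property—is where the bulk of the delicate analytic work will sit, and is the step I expect to consume most of the technical effort.
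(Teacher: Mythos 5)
Your proposed Rouch\'{e}/winding-number step is backwards, and in fact contradicts a basic fact the paper invokes repeatedly: $\beta$ never vanishes. (The paper says so explicitly in the discussion of weak normal families: ``since $\beta\neq 0$, we can ignore handling these cases.'') If on $\partial D_r$ you arrange $|\tau|<|\beta|$, Rouch\'{e} says $F=\beta+\tau$ has the \emph{same} number of zeros in $D_r$ as $\beta$, namely zero, which is the opposite of what you want. Your appeal to the open mapping theorem is also misdirected: a nonvanishing holomorphic $\beta$ cannot have its modulus attain an interior minimum on $D_r$ (minimum modulus principle), so the geometric picture of ``$\beta$ covering a full neighborhood of $0$ inside $D_r$'' is impossible. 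The small values of $\beta$ guaranteed by $s_0\in\mathcal{B}$ occur as a sequence of boundary-attained minima and along orbits, not as an interior minimum that Rouch\'{e} can see.

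The paper's argument is quite different, and considerably more elementary. It does not set up a degree-theoretic comparison between $F$ and $\beta$. It instead uses The Asymptotic Theorem of the second kind to get, for each $n$, a threshold $K_n$ past which $-\tau^n(s+k_n)$ maps $\mathcal{N}$ into a tiny disk $\mathbb{D}_{|z|<\epsilon}$, and uses the weak Julia criterion in the inverted form $\liminf_k\inf_{\mathcal{N}}|\beta(s+k)|=0$ to find a $k_n>K_n$ and an $s_n$ with $|\beta(s_n+k_n)|<\epsilon$. The clinching observation is the leading-order cancellation $\beta(s_n+k_n)\approx e^{-\lambda(s_n+k_n)}/\mu$ together with $\tau^n(s_n+k_n)\approx -e^{-\lambda(s_n+k_n)}/\mu$, combined with the fact that $\beta(\cdot+k_n)$ also gets arbitrarily large on $\mathcal{N}$, to locate an actual coincidence $\beta(s_n^*+k_n)=-\tau^n(s_n^*+k_n)$; a compactness argument in $n$ then produces the limit point $s_0^*$ and the two alternatives of the lemma. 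If you want to repair your proof along Rouch\'{e} lines, you would need to compare $F$ against $\tau$ (not $\beta$) on a contour where $|\beta|<|\tau|$, and then independently establish that $\tau$ has a zero inside that contour --- which is not obvious and is not what the paper does.
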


\begin{remark}
In this lemma we are assuming that $\tau^{n}(s_0+k)$ converges as $n\to\infty$. If it doesn't, well then, we are already done and $\tau$ is not holomorphic on $\mathcal{N}$ (a neighborhood about $s_0$). This is important to remember as the greater goal of this section. Which is to show it may converge, but it can't converge uniformly. 
\end{remark}

\begin{proof}
By The Asymptotic theorem of the second kind \ref{thmASYM2}, there exists a sequence $K_n$ such for $k > K_n$:

$$
||\tau^{n}(s + k)||_{\mathcal{N}} < \epsilon\\
$$

For arbitrary $\epsilon > 0$. Additionally, since $s_0 \in \mathcal{B}$, we must have:

$$
\limsup_{k\to\infty} \left|\left|\frac{1}{\beta(s+k)}\right|\right|_{\mathcal{N}} = \infty\\
$$

And by inverting the limit, we are given:

$$
\liminf_{k\to\infty}\inf_{s\in\mathcal{N}}|\beta(s+k)| \to 0
$$

Therefore, there exists $k_n > K_n$ such there exists an $s_n$ such that:

$$
|\beta(s_n+k_n)| < \epsilon\\
$$

Thereby, since $-\tau^n(s+k_n) : \mathcal{N} \to \mathbb{D}_{|z| < \epsilon}$; they must share a point $s_n^*$ such that they agree (for large enough $n$ and small enough $\epsilon$). This is guaranteed because $\beta(s_n+k_n)$ can be chosen to look like $e^{-\lambda(s_n+k_n)}/\mu$ and $\tau(s_n+k_n) \approx -e^{-\lambda(s_n+k_n)}/\mu$. As $\beta(s+k_n)$ will be arbitrarily large as $s$ moves; thereby $\tau$ and $\beta$ must intersect:

$$
\beta(s_n^* + k_n) + \tau^n(s_n^*+k_n) = 0\\
$$

From here, it's a little routine. We can choose a sequence $s_{n}^* \to s_0^*$ (because this is a bounded sequence).  If $\tau(s_0^*+k)$ doesn't converge, we are done. Otherwise, there must be a point $N$ such for all $n > N$, $K_n = K_N$. Thereby, there exists a $k_N > K_N$, such that:

$$
-\tau(s_0^* + k_N) = \beta(s_0^* + k_N)\\
$$
\end{proof}

This is the author's interpretation of Levenstein's argument. As Levenstein comes from an engineering background; I interpret that he interprets, if you can't hold it in your hands and build it; it may not exist. For that reason, he extended into looking for where $\beta(s-1) + \lambda s / \mu = 2 \pi i n/\mu$, where quite beautifully, our value $s_0^*$ is right next door.

Levenstein programmed a very efficient protocol for finding these values for $\mu = 1$ and $\lambda =1$. In many ways, we are just re-interpreting the method to arbitrary subsequences.

To display why this shows non-uniform convergence on $\mathcal{B}$ is, again, slightly technical; but more routine than complicated. Every point $s_0$ for every neighborhood $\mathcal{N} = \{s \in \mathbb{C}\,|\,|s-s_0| < 1/n\}$ has a point $s^*$ and an orbit $k_{\mathcal{N}}$, such that $\beta(s^*+k_{\mathcal{N}}) +  \tau(s^*+k_{\mathcal{N}}) = 0$. Thereby, the radius of convergence $R^n(s_0)$ about $s_0$ is  $< 1/n$. Letting $n\to\infty$; and we are guaranteed that $\tau(s)$ is not holomorphic.

This is done because if $\tau(s)$ is not holomorphic on $\mathcal{N}$, then neither is $\tau(s-1)$. This is the second point of us introducing translation invariant domains. We'll make this argument more cohesive in the following theorem.

\begin{theorem}[Generalized Levenstein's Theorem]\label{thmGenLev}
Let $s_0 \in \mathcal{B}$ (the weak Julia set), then for any neighborhood $\mathcal{N} = \{s\in \mathbb{C}\,|\,|s-s_0| < \delta,\,\delta>0\}$, the function $\tau(s)$ is not holomorphic.
\end{theorem}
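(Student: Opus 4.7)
The strategy is to apply Lemma \ref{lmaSing} to locate a point in the forward orbit of $\mathcal{N}$ where $\tau$ misbehaves, and then transport this misbehavior back to $\mathcal{N}$ itself by a contrapositive use of the inverse Abel equation $F(s+1) = \exp(\mu F(s))$ satisfied by $F = \beta + \tau$. Fix $s_0 \in \mathcal{B}$ and an arbitrary neighborhood $\mathcal{N}$ of $s_0$; after shrinking $\mathcal{N}$ if necessary, we may assume it avoids the countable, discrete set of poles of $\beta$, so that $\beta$ is holomorphic on $\overline{\mathcal{N}}$.

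The first step is to invoke Lemma \ref{lmaSing} to obtain an integer $k \ge 0$ and a point $s_0^* \in \mathcal{N}$ for which either (i) the sequence $\tau^n(s_0^* + k)$ fails to converge, or (ii) $F(s_0^* + k) = 0$. The second step is to observe that holomorphy of $\tau$ on $\mathcal{N}$ forces $\tau$ to converge and to be holomorphic on $\mathcal{N} + k$ as well: the defining recursion yields the exact identity $F_n(s+1) = \exp(\mu F_{n+1}(s))$ between the partial sums $F_n = \beta + \tau^n$, so passing to the limit on $\mathcal{N}$ shows that $\tau^n(\cdot + 1)$ converges to $\exp(\mu F(\cdot)) - \beta(\cdot + 1)$ on $\mathcal{N}$. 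Iterating this $k$ times extends $\tau$ holomorphically to $\mathcal{N} + k$ while preserving the functional equation.

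This immediately rules out case (i), because divergence of $\tau^n$ at a point of $\mathcal{N} + k$ contradicts the propagated convergence. For case (ii), the propagated functional equation gives $F(s + k) = \exp(\mu F(s + k - 1))$ on $\mathcal{N}$, so $F$ lies in the image of $\exp$ on $\mathcal{N} + k$ and cannot vanish there, contradicting $F(s_0^* + k) = 0$. Either way, $\tau$ fails to be holomorphic on $\mathcal{N}$.

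The main obstacle is the mild but necessary bookkeeping to ensure that $s_0^*$ genuinely lies in the open neighborhood $\mathcal{N}$ rather than on its boundary, since the proof of Lemma \ref{lmaSing} constructs $s_0^*$ as a limit of points $s_n^* \in \mathcal{N}$. This is handled by applying the lemma to a strictly smaller neighborhood $\mathcal{N}' = \{|s - s_0| < \delta/2\}$; the limit then lies in $\overline{\mathcal{N}'} \subset \mathcal{N}$, and the contradiction above applies to the larger $\mathcal{N}$. Everything else follows directly from the functional equation and the preceding lemma.
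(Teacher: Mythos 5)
Your proof is correct and reaches the same conclusion as the paper's, but the mechanism is different in a way worth noting. The paper argues \emph{directly}: it writes $\tau(s) = \log^{\circ k}\bigl(\beta(s+k) + \tau(s+k)\bigr) - \beta(s)$ and asserts that the pathology at $s_0^*+k$ produced by Lemma \ref{lmaSing} (divergence of $\tau$, or a zero of $F$) is inherited by $\tau$ on $\mathcal{N}$ through the iterated logarithm. This is a backward-pull argument, and it tacitly requires tracking branches of $\log^{\circ k}$, which the paper does not make explicit. You instead argue by \emph{contraposition with a forward push}: if $\tau$ were holomorphic on $\mathcal{N}$, the identity $F_n(s+1) = \exp\bigl(\mu F_{n+1}(s)\bigr)$ passes to the limit to yield $\tau(s+1) = \exp\bigl(\mu F(s)\bigr) - \beta(s+1)$, which extends holomorphy to $\mathcal{N}+1$ and, by iteration, to $\mathcal{N}+k$. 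This forward step uses only the exponential, so there are no branch issues, and it simultaneously handles both alternatives from the lemma: a divergent $\tau^n$ at $s_0^*+k$ is incompatible with the propagated uniform convergence, and $F(s_0^*+k)=0$ is incompatible with $F(s+k)$ being a value of $\exp$ (which never vanishes). Your explicit shrinking to $\mathcal{N}' = \{|s-s_0| < \delta/2\}$ to keep the limit point $s_0^*$ inside the open set is a bookkeeping point the paper skips. On balance your route is a bit tighter than the paper's at the precise place where the paper is vaguest, namely why the $k$-fold composite logarithm transmits the singularity.
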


\begin{proof}
Par Lemma \ref{lmaSing}, we know for any neighborhood $\mathcal{N}$ there exists a $k$ such that:

$$
\tau(s+k)\,\,\text{is not holomorphic on}\,\, \mathcal{N}\\
$$

As,

$$
\tau(s) = \log^{\circ k}\left(\beta(s+k) + \tau(s+k)\right) - \beta(s)\\
$$

It cannot be holomorphic on $\mathcal{N}$. Since, by Lemma \ref{lmaSing} we can shrink $\mathcal{N}$ arbitrarily small; we must have $\tau(s)$ is not holomorphic on all neighborhoods about $s_0$.
\end{proof}

This result assures us that the best result we can possibly get on the weak Julia set is an asymptotic result. There can be no holomorphy. To better understand what's going on, we have to better understand the derivatives of $\tau^n$.

\section{The Asymptotic Theorem of the third kind} \label{sec21}
\setcounter{equation}{0}

The final form of our asymptotic theorems is to understand this asymptotic as it works for derivatives and Taylor coefficients. This allows us to derive a faux asymptotic series. I say ``faux'' because it will not take the form of the usual idea of an asymptotic series; and will be a tad more involved.

Now that we know that $\tau$ is no where holomorphic on $\mathcal{B}$; we ask how close it comes to being holomorphic. The answer, qui c'est, is as close as it possibly can. Now, a corollary of The Asymptotic Theorem of the second kind \ref{thmASYM2}; which we've yet to mention; is going to play a large role.

\begin{corollary}\label{cor2ASYM2}
For all $n \in \mathbb{N}$ and all $l \in \mathbb{N}$, there exists a constant $C_{nl} > 0$; such for all $(s,\lambda) \in \mathbb{L}$ and $k > K$:

$$
\left|\left| \frac{d^l}{ds^l} \tau^n(s+k)\right|\right|_\mathcal{N} \le C_{nl} e^{-\Re\lambda k}\\
$$

For $\mathcal{N}$ a compact neighborhood of $s$ in $\mathbb{L}$.
\end{corollary}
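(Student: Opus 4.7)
The plan is to prove the bound by a double induction: first strengthen the Asymptotic Theorem of the second kind to show the sharper statement that $\|\tau^n(s+k)\|_\mathcal{N}$ decays like $e^{-\Re \lambda k}$, and then promote this sup-norm bound to the derivative bound via Cauchy's integral formula.

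For the base cases, $\tau^0 = 0$ is trivial, and $\tau^1(s+k) = -\log(1+e^{-\lambda(s+k)})$ is bounded directly: for $K$ large enough we have $|e^{-\lambda(s+k)}| \le 1/2$ on $\mathcal{N}$ for $k > K$, so $|\tau^1(s+k)| \le 2|e^{-\lambda(s+k)}| \le M_1 e^{-\Re \lambda k}$ uniformly on $\mathcal{N}$, and by the same token every derivative $\frac{d^l}{ds^l}\tau^1(s+k)$ is $O(\lambda^l e^{-\Re\lambda k})$ since differentiation only introduces polynomial factors in $\lambda$ and products of bounded logistic-type factors.

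For the inductive step on $n$, I invoke the $\tau$ recursion \eqref{eq:TAU}:
$$
\tau^{n+1}(s+k) = \log\!\left(1 + \frac{\tau^n(s+k+1)}{\beta(s+k+1)}\right) - \log\!\left(1 + e^{-\lambda(s+k)}\right).
$$
The second term is again $O(e^{-\Re\lambda k})$ as in the base case. For the first term, the inductive hypothesis yields $\|\tau^n(\cdot+k+1)\|_\mathcal{N} \le M_n e^{-\Re\lambda(k+1)}$, so the argument of the logarithm tends uniformly to $1$ and $\log(1+x) = O(|x|)$ there, giving a bound of the form $\frac{M_n}{\inf_\mathcal{N}|\beta(\cdot+k+1)|} \cdot e^{-\Re\lambda(k+1)}$. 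Combining, $\|\tau^{n+1}(\cdot+k)\|_\mathcal{N} \le M_{n+1} e^{-\Re\lambda k}$. To lift this to derivatives, enlarge $\mathcal{N}$ to a compact $\mathcal{N}'$ in $\mathbb{L}$ with $\operatorname{dist}(\partial\mathcal{N},\partial\mathcal{N}') \ge \delta > 0$, apply Cauchy:
$$
\frac{d^l}{ds^l}\tau^n(s+k) = \frac{l!}{2\pi i}\oint_{|\xi - s| = \delta}\frac{\tau^n(\xi+k)}{(\xi-s)^{l+1}}\,d\xi,
$$
which immediately gives $\|\frac{d^l}{ds^l}\tau^n(\cdot+k)\|_\mathcal{N} \le \frac{l!}{\delta^l} M_n e^{-\Re\lambda k} =: C_{nl}\,e^{-\Re \lambda k}$.

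The main obstacle is controlling $\inf_\mathcal{N}|\beta(s+k+1)|$ from below in the inductive step, since on portions of the weak Julia set $\beta$ may approach zero along orbits, and $\mathcal{N}$ is allowed to be an arbitrary compact neighborhood in $\mathbb{L}$. I would resolve this by noting that the zeros of $\beta$ are isolated, so for each fixed $k$ we may (if necessary) pass to a sub-neighborhood avoiding them; and then use that only the large-$k$ behavior matters, absorbing finitely many exceptional $k$ into the choice of $K$. This will likely force $K$ and the constants $C_{nl}$ to depend on $\mathcal{N}$ (not just on $n,l$), which seems consistent with the implicit quantifier structure of the corollary since $K$ is already indicated to depend on $\mathcal{N}$.
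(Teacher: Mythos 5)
Your plan — establish the sup-norm rate $\|\tau^n(s+k)\|_{\mathcal{N}} = O(e^{-\Re\lambda k})$ by induction on $n$, then promote it to derivatives via Cauchy's integral formula — is exactly the structure the paper's one-line proof is gesturing at ("since $\tau^n(s+k)\to 0$ uniformly, so do its derivatives; and these terms decay like $\mathcal{O}(e^{-\lambda k})$"). So the route is the same. Moreover, you have done something the paper did not: you explicitly flagged that the inductive step requires a lower bound on $|\beta(s+k+1)|$ over $\mathcal{N}$, and this is in fact the genuine difficulty.

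Where the proposal goes wrong is in the proposed resolution of that difficulty. First, a small factual point: $\beta$ has \emph{no} zeros at all — the paper remarks (``since $\beta\neq 0$, we can ignore handling these cases'') when setting up the weak Julia set — so ``zeros of $\beta$ are isolated'' is not the issue. The issue is that $|\beta(s+k+1)|$ can become \emph{arbitrarily small} on $\mathcal{N}$, and on the weak Julia set $\mathcal{B}$ this happens along an \emph{infinite} subsequence of $k$, by the very definition of $\mathcal{B}$: $\limsup_{k\to\infty}\|1/\beta(s+k)\|_{\mathcal{N}}=\infty$ for every neighborhood $\mathcal{N}$ of $s_0\in\mathcal{B}$. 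Absorbing ``finitely many exceptional $k$'' into the choice of $K$ therefore cannot close the gap — there are infinitely many bad $k$, and no choice of sub-neighborhood fixes this either, since the definition of $\mathcal{B}$ is quantified over \emph{all} neighborhoods. Your inductive step, which needs $\|\tau^n(s+k+1)/\beta(s+k+1)\|_{\mathcal{N}} = O(e^{-\Re\lambda k})$, simply does not propagate at those $k$. Concretely, the decay rate $e^{-\Re\lambda k}$ in the sup-norm is only clearly derivable for $\mathcal{N}$ contained in the weak Fatou set $\mathcal{P}$, where one has the finite constant $A_\mu = \limsup_k\|1/(\mu\beta(s+k))\|_{\mathcal{N}} < \infty$ and the recursion then contracts (this is exactly The Linearization Theorem \ref{thmLIN} / The Convergence Theorem \ref{thmABLCVG}). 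The paper's proof of this corollary does not address this point either; it simply asserts $\tau^n(s+k)=\mathcal{O}(e^{-\lambda k})$ without deriving it, so the gap you noticed is a real one shared by the paper. What does survive unconditionally from The Asymptotic Theorem of the second kind \ref{thmASYM2} and Cauchy's estimate is the weaker, rate-free statement $\|\tfrac{d^l}{ds^l}\tau^n(s+k)\|_{\mathcal{N}}\to 0$ as $k\to\infty$ — and in fact that rate-free statement is all that is actually invoked downstream in the proof of The Asymptotic Theorem of the third kind \ref{thmASYM3}, where the corollary is cited only to conclude $\lim_{k\to\infty}\|\tau^n(s+k)\|_{\mathcal{N},m}=0$.
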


\begin{proof}
Since $\tau^n(s+k) \to 0$ uniformly as $k \to \infty$; then so do its derivatives in $s$. Additionally, these terms converge like $\mathcal{O}(e^{-\lambda k})$ because $\tau^n(s+k) = \mathcal{O}(e^{-\lambda k})$.
\end{proof}

We are now at a point where we have to introduce what the Asymptotic Theorem of the third kind is. This is a tad technical; but simple once you understand the grand motions. There exists a sequence of functions $\sigma^j(s_0)$ which are not necessarily continuous. Such that, if $P_m(s_0;s)$ is a polynomial:

$$
P_m(s_0; s) = \sum_{j=0}^m \sigma^j(s_0) (s-s_0)^j\\
$$

Then,

$$
P_m(s_0+1;s) - \exp(P_m(s_0;s)) = \mathcal{O}(s-s_0)^{m+1}\\
$$

This implies there's an asymptotic series associated to $F(s_0) = \beta(s_0) + \tau(s_0)$; which doesn't necessarily converge; but if it doesn't, it satisfies this functional equation. By which:

$$
F(s) \sim P_m(s_0,s)\,\,\text{like}\,\,(s-s_0)^{m+1}\,\,\text{as}\,\,s\to s_0\\
$$

This result can be used to show; for instance, that $F_{1,1}(x)$ is infinitely differentiable for $x \in \mathbb{R}^+$. But additionally, this applies to everywhere in the weak Julia set where we converge. And if it doesn't converge; we are still guaranteed an asymptotic series which almost converges.

We've built up a lot of tools in this paper such that this result isn't very difficult to prove. To get a picture of how this works is a little nuanced--we have to pay close attention to the sequence $\rho^j$. As $\rho^j(s+k)$ gets arbitrarily small as we increase $k$, and each $\rho^j(s_0 + k)$ is analytic in a neighborhood of $s_0 \in \mathcal{B}$; the only way that $\tau$ diverges is if arbitrarily large derivatives of $\rho^j$ get arbitrarily large. 

Thereby, the result we are actually proving; is better phrased in a modded out space. If we mod out by $\mathcal{O}(s-s_0)^{m+1}$; then the algebra becomes a bit cleaner. Let us call the spaces:

\begin{align*}
    \mathbb{P}_m &= \{p \in \mathbb{C}[s],\,\text{the space of polynomials with coefficients in}\,\mathbb{C}\,|\, \deg(p) = m\}\\
    \mathcal{H}_{s_0} &= \{f\,\text{is holomorphic in a neighborhood of}\,s_0\in\mathbb{C}\}\\
\end{align*}

And the projection $\iota_m: \mathcal{H}_{s_0} \to \mathbb{P}_m$ which takes:

\begin{align*}
    \iota_m(f(s)) &= f(s) + \mathcal{O}(s-s_0)^{m+1}\\
    \iota_m\left(\sum_{j=0}^\infty a_j (s-s_0)^j\right) &= \sum_{j=0}^m a_j (s-s_0)^j
\end{align*}

Then the result we are really saying is that:

$$
\iota_m(\tau^n(s)) = \iota_m\left(\sum_{j=1}^n \rho^j(s)\right) \to \sum_{j=0}^m \left(\sigma^j(s_0)-\frac{\beta^(j)(s_0)}{j!}\right)(s-s_0)^j\\
$$

Where:

$$
\tau(s) = \sum_{j=0}^m \left(\sigma^j(s_0) - \frac{\beta^{(j)}(s_0)}{j!}\right)(s-s_0)^{j} + \mathcal{O}(s-s_0)^{m+1}\\
$$

Where as $m\to\infty$ we either diverge or converge; depending on whether we are holomorphic or not. Now, to the untrained eye, this may look very close to a proof of holomorphy. For that, it may help to remember the case $\mu = 1$ and $\lambda = 1$ for $x \in \mathbb{R}^+$. 

The function $\tau(x)$ will be smooth; and we'll have the exact same expansion. What's to be remembered, this expansion will be discontinuous as we leave the real-line. The expansion is precisely only valid for $x_0 \in \mathbb{R}^+$. Now the $\mathcal{O}(s-s_0)^{m+1}$ term may be complex; but it doesn't reflect the accuracy of an expansion in the complex plane. All we've done is created a polynomial $P_m$ such that:

$$
P_m(x_0+1;s) - \exp(P_m(x_0;s)) = \mathcal{O}(s-s_0)^{m+1}\\
$$

Where it just so happens, that in this case we can assign these as derivatives of $\beta(s) + \tau(s)$:

$$
P_m(x_0;s) = \sum_{j=0}^m \left(\frac{d^j}{dx^j}\Big{|}_{x=x_0} \beta(x) + \tau(x)\right)\frac{(s-x_0)^j}{j!}\\
$$

There just happens to be a path along the real-line where it is continuous. This doesn't necessarily happen for arbitrary points in the complex plane. This is precisely why we've chosen the notation $\sigma^j(x_0)$; it isn't necessarily the derivative in a meaningful sense of the word. It's just an induced coefficient of a polynomial; that can play the role of a derivative. 

To over-explain; we are not always guaranteed the identity:

$$
\frac{d}{dx_0}\sigma^j(x_0) = \sigma^{j+1}(x_0) (j+1)
$$

But we are in the special case of $\mu =1$ and $\lambda = 1$ and $x_0 \in \mathbb{R}^+$. We cannot claim this identity for $s_0 \in \mathbb{C}$; at least not in any sense of complex limits.\\

To get things rolling, we'll begin by reducing our cases to large $k>K$ in the orbit.

\begin{lemma}\label{lmaASYM3}
If $P_m(s_0+1;s)$ exists and additionally $\sigma^0(s_0+1) \neq 0$, then there exists $P_m(s_0;s)$.
\end{lemma}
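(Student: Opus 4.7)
The plan is to invert the functional equation $P_m(s_0+1;\cdot) \equiv \exp P_m(s_0;\cdot) \pmod{(s-s_0)^{m+1}}$ by applying a branch of $\log$ to $P_m(s_0+1;\cdot)$ and truncating the result to degree $m$. Formally, we want
\[
P_m(s_0;s) = \log P_m(s_0+1;\cdot) + \mathcal{O}((s-s_0)^{m+1}),
\]
and the whole question is whether the right hand side is a legitimate degree-$m$ polynomial approximation near the base point. The hypothesis $\sigma^0(s_0+1) \neq 0$ enters precisely here: evaluated at the centering point, $P_m(s_0+1;\cdot)$ equals $\sigma^0(s_0+1)$, and nonvanishing of this value is exactly what is needed so that a holomorphic branch of $\log$ (on base $b = e^\mu$) is available on a neighborhood of that value.

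Concretely, I would first fix a branch of $\log$ analytic on a disk $U$ around $\sigma^0(s_0+1)$, then observe that $P_m(s_0+1;\cdot)$, being a polynomial, maps a small neighborhood of the centering point into $U$, so the composition $G := \log P_m(s_0+1;\cdot)$ is holomorphic on a neighborhood of $s_0$. I would then set
\[
\sigma^j(s_0) := \frac{1}{j!}\,\frac{d^j G}{ds^j}\bigg|_{s=s_0}, \qquad j=0,1,\ldots,m,
\]
and define $P_m(s_0;s) := \sum_{j=0}^m \sigma^j(s_0)(s-s_0)^j$. Verifying the asymptotic functional equation is then automatic: Taylor's theorem gives $P_m(s_0;s) = G(s) + \mathcal{O}((s-s_0)^{m+1})$, and exponentiating, using that $\exp$ is entire and $\exp G \equiv P_m(s_0+1;\cdot)$ by construction of $G$, produces the desired identity modulo $(s-s_0)^{m+1}$.

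The main obstacle is really a bookkeeping point rather than a serious analytic difficulty: the branch of $\log$ is not canonical, and different branches shift $\sigma^0(s_0)$ by integer multiples of $2\pi i/\mu$, with the higher coefficients $\sigma^j(s_0)$ for $j \geq 1$ unaffected because they come from derivatives of $G$. For the lemma itself this ambiguity is harmless, since any branch yields a valid polynomial solving the asymptotic functional equation. In the broader setting of Section~\ref{sec21}, one must eventually pin down the branch so that the $\sigma^j$ produced here mesh with the coefficients already determined by $\beta(s)+\tau(s)$ at $s_0$; that compatibility is what distinguishes the $\sigma^j(s_0)$ from genuine complex derivatives when $s_0$ lies in the weak Julia set, but it plays no role in the existence statement, which is a purely formal consequence of $\exp$ having a local inverse at any nonzero value.
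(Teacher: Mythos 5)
Your proposal is correct and takes essentially the same approach as the paper: apply a branch of $\log$ (base $b = e^\mu$) to $P_m(s_0+1;\cdot)$, with the nonvanishing of $\sigma^0(s_0+1)$ guaranteeing a holomorphic branch exists locally, then truncate to degree $m$. The paper's proof is a single terse sentence making exactly this observation; you supply the branch-selection bookkeeping and explicitly write out the coefficients, which is a useful elaboration but not a different argument.
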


\begin{proof}
There exists a polynomial $P_m(s_0;s)$ such that:

$$
\exp P_m(s_0;s) - P_m(s_0+1;s) = \mathcal{O}(s-s_0)^{m+1}\\
$$

So long as the first term of $P_m(s_0+1;s)$ is non-zero (there's no logarithmic singularity). By way, there is no polynomial $p$ in $s$ such that $\exp p = \mathcal{O}(s)$; but there's always a polynomial $p$ in $s$ such that $\exp p = B + \mathcal{O}(s)$ for $B \neq 0$.
\end{proof}

From here we have to discuss our supremum norms. We'll begin by writing, if $a(s) \in \mathcal{H}_{s_0}$; is holomorphic about $s_0$:

$$
||a(s)||_{\mathcal{N},m} = ||\iota_m(a(s))||_{\mathcal{N}}\\
$$

Which means the supremum norm on the Taylor polynomial up to $m$ terms of $a$ about a point $s_0$. The give all end all of The Asymptotic Theorem of the third kind, is that we converge uniformly under this norm for fixed $m$. A second norm that is going to be very important, is a derivative of this norm. We'll call this slightly differently:

$$
||a(s)||_m = \sum_{j=0}^m |a^{(j)}(s)|\\
$$

By which, these two norms are related as:

$$
1/\delta ||a(s_0)||_m \le ||a(s)||_{\mathcal{N},m} \le ||a(s_0)||_m\\
$$

For some $\delta > 0$. This is derived from bounding terms of the sum as:

$$
||a(s)||_{\mathcal{N},m} = \left|\left|\sum_{j=0}^m a^{(j)}(s_0) \frac{(s-s_0)^j}{j!}\right|\right|_{\mathcal{N}} \le \sum_{j=0}^m |a^{(j)}(s_0)|\\
$$

Which is true so long as $|s-s_0| \le 1$; as we are concerned with arbitrarily small neighborhoods $\mathcal{N}$ of $s_0$; it's perfectly valid to make this comparison. Bounding from below is just as straight forward.\\

Within the above norms we want to derive convergence of the $\tau$ process. We're going to follow a similar procedure as done in The Inverse Abel Theorem \ref{thmINVABEL}; but we're going to have to be a tad more careful. But the steps are similar.

We are given two different modes of convergence; which we've detailed fairly well before. But, now, we have to talk about point wise norms, as opposed to norms on compact sets. By which, we have to create a new distinction. The use of these norms is to avoid much of the trouble of the weak Julia set.

\begin{lemma}
Let $m \in \mathbb{N}$ and let $\iota_m$ be centered about $s_0+k$; such that $\beta(s)$ is holomorphic in a neighborhood of $s_0$. Then,

$$
\lim_{k\to\infty}  \left|\left|\frac{1}{\beta(s+k)}\right|\right|_{\mathcal{N},m} < \infty\\
$$
\end{lemma}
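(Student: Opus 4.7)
The strategy is to work at the jet level at the single point $s_0+k$, not on a neighborhood. By the norm equivalence $\|a(s)\|_{\mathcal{N},m} \le \|a(s_0)\|_m = \sum_{j=0}^m |a^{(j)}(s_0)|$ given earlier in the section, it suffices to bound $\sum_{j=0}^m |(1/\beta)^{(j)}(s_0+k)|$ uniformly in $k$. Since $m$ is fixed and finite, I would prove this by showing that each derivative $(1/\beta)^{(j)}(s_0+k)$, for $0 \le j \le m$, stays bounded (and in most cases converges to $0$) as $k \to \infty$. The whole point of the norm $\|\cdot\|_{\mathcal{N},m}$ is that pointwise information at $s_0+k$ can behave well even when the compact-set supremum norm of $1/\beta(s+k)$ blows up, which is precisely what happens on the weak Julia set.

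The engine is the functional equation rewritten as $1/\beta(s+1) = (1+e^{-\lambda s})\,e^{-\mu\beta(s)}$. Applying Fa\`a di Bruno at $s = s_0+k-1$ expresses $(1/\beta)^{(j)}(s_0+k)$ as a polynomial $P_j$ in $\beta(s_0+k-1),\beta'(s_0+k-1),\dots,\beta^{(j)}(s_0+k-1)$ and in $e^{-\lambda(s_0+k-1)}$, multiplied overall by the factor $e^{-\mu\beta(s_0+k-1)}$. In parallel, differentiating $\log\beta(s+1) = \mu\beta(s) - \log(1+e^{-\lambda s})$ produces a recursion of the form $\beta^{(j)}(s+1)/\beta(s+1) = \mu\beta^{(j)}(s) + R_j(\beta'(s),\dots,\beta^{(j-1)}(s), e^{-\lambda s})$ with explicit polynomials $R_j$. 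Iterating from $s_0$ up to $s_0+k-1$, and absorbing the $e^{-\lambda s}$ corrections via Theorems \ref{thmASYM1} and \ref{thmASYM2}, yields schematic bounds of the form $|\beta^{(j)}(s_0+k)| \le C_j\,|\beta(s_0+k)|\,Q_{j,k}$ where $Q_{j,k}$ is polynomial in the earlier values $\beta(s_0+l)$ for $l \le k-1$ and in the fixed numbers $\beta^{(i)}(s_0)$ for $i \le j$.

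Substituting into the Fa\`a di Bruno expansion, every term carries the factor $e^{-\mu\beta(s_0+k-1)}$ multiplied by a polynomial in the $\beta(s_0+l)$. This is where the cancellation lives: the functional equation itself gives $|\beta(s_0+k)| \approx |e^{\mu\beta(s_0+k-1)}|$, so the exponential factor is precisely the reciprocal of the dominant growth of $\beta(s_0+k)$. In the super-exponential regime this factor decays double-exponentially and annihilates any polynomial-in-$\beta$ growth, while in the bounded (Shell-Thron) regime all quantities are already $O(1)$ and the bound is immediate. An induction on $j$ then closes the argument; the base case $j=0$ is the observation that $1/\beta(s_0+k)$ is bounded, because $\beta(s_0+k)$ does not accumulate at zero along the iteration at a generic $s_0$ (being driven by the exponential $e^{\mu\beta(s_0+k-1)}$ away from the origin).

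The main obstacle is uniformly reconciling the two asymptotic regimes for the orbit $\{\beta(s_0+k)\}$ -- super-exponential growth versus bounded convergence to a fixed point -- in a single estimate. The proof will most naturally proceed by cases, and one has to be careful that the constants $C_j$ and $Q_{j,k}$ remain controlled across any transitions. A secondary technicality is accounting for the $e^{-\lambda s}$ correction terms coming from $\log(1+e^{-\lambda s})$; fortunately these contribute only $O(e^{-\Re\lambda\,k})$ perturbations by Theorem \ref{thmASYM2}, so they do not disturb the leading cancellation and can be folded into the constants at the end.
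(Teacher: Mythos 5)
Your proposal takes a genuinely different route from the paper's. The paper dispatches the lemma by a high-level dichotomy: it splits into the weak Fatou case, where $\beta(s_0+k)\to\omega\neq 0$ and all higher derivatives tend to zero so the norm converges to $1/|\omega|$, and the weak Julia case, where it argues by elimination and Riemann-sphere compactness that $|\beta(s_0+k)|\to\infty$ (a nonzero finite limit would place $s_0$ in $\mathcal{P}$, a limit of $0$ would force $\exp 0 = 0$, so only $\infty$ survives once some subsequence converges) and then declares that the derivatives follow suit, giving limit $0$. Your approach instead attacks the jet at $s_0+k$ directly: Fa\`a di Bruno applied to $1/\beta(s+1)=(1+e^{-\lambda s})e^{-\mu\beta(s)}$, combined with the derivative recursion from $\log\beta(s+1)=\mu\beta(s)-\log(1+e^{-\lambda s})$, and the observation that the overall factor $e^{-\mu\beta(s_0+k-1)}\approx 1/\beta(s_0+k)$ is strong enough to annihilate the polynomial-in-$\beta$ growth of the derivative terms. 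This is more mechanical, but it is also more honest about the actual cancellation mechanism: the paper's one-line step \enquote{therefore, so do its derivatives} conceals exactly the estimate you are trying to carry out. What your route buys is an explicit reason the derivatives of $1/\beta$ are small; what the paper's route buys is brevity and a clean split along the $\mathcal{P}/\mathcal{B}$ dichotomy that is already developed.

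There is, however, a gap in your base case. You assert $1/\beta(s_0+k)$ stays bounded \enquote{because $\beta(s_0+k)$ is driven by the exponential $e^{\mu\beta(s_0+k-1)}$ away from the origin.} That heuristic is false in general: $e^{\mu w}$ gets arbitrarily close to $0$ whenever $\Re(\mu w)\to-\infty$, so the exponential by no means repels the orbit from $0$ a priori. What actually holds is the dichotomy the paper isolates: convergence of $\beta(s_0+k)$ to $0$ is impossible because the next iterate then tends to $e^{0}=1$, so no tail of the orbit can converge to $0$; a nonzero finite limit places $s_0$ in $\mathcal{P}$; and on $\mathcal{B}$ one has to invoke density of the orbit in the Julia set plus compactness of $\widehat{\mathbb{C}}$ to force $|\beta(s_0+k)|\to\infty$. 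Without some version of that argument, your claim that the $j=0$ case is \enquote{the observation} is unsupported, and it is the same subtlety the paper itself flags as \enquote{the difficult part.} Your induction on $j$ is otherwise sound in outline, and your acknowledgment that the super-exponential and Shell-Thron regimes must be reconciled is exactly the right caution; the base case just needs to be handled by the paper's elimination-plus-compactness reasoning, not by a growth heuristic on the exponential.
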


\begin{remark}
It's very important to notice that we are using the projection $\iota_m$ about $s_0 + k$; not about $s_0$. So we are changing the projection/norm as we move $k$. This means that:

$$
||a(s+k)||_{\mathcal{N},m} \le \sum_{j=0}^m |a^{(j)}(s_0 + k)|\\
$$

This allows us to talk fluidly about the orbits under this norm.
\end{remark}

\begin{proof}
If $s_0 \in \mathcal{P}$, the weak Fatou set; then $\beta(s+k) \to \omega \neq 0$; as detailed in the first half of Theorem \ref{thmINVABLSHTH}. Thereby, for $s_0 \in \mathcal{P}$:

$$
\lim_{k\to\infty}  \left|\left|\frac{1}{\beta(s+k)}\right|\right|_{\mathcal{N},m} =\frac{1}{|\omega|}\\
$$

Because all the derivatives of $\beta(s+k) \to 0$ as $k\to\infty$.\\

The more difficult case is when $s_0 \in \mathcal{B}$, the weak Julia set. We essentially just have to show that the norm about $s_0$ diverges. We'll start with $m=0$; and, note that:

$$
\lim_{k\to\infty}\frac{1}{|\beta(s_0 + k)|} \to 0\\
$$

This can be done by the process of elimination. It cannot tend to a nonzero constant, as that would mean $s_0$ is in the weak Fatou set. If this tended to infinity, it would imply there's a fixed point $\exp 0 = 0$; which is clearly impossible.

The difficult part is to prove that this limit exists. But once it exists it must be $0$. But every orbit $\beta(s+k)$ of a neighborhood $\mathcal{N}$ about $s_0$ of the Julia set is dense in the Julia set; and hence each point gets arbitrarily close to an orbit sending to $\infty$. Hence, eventually $\beta(s_0+k)$ gets arbitrarily close to an orbit which sends to infinity. This causes a subsequence to converge. But no subsequence can converge to $0$ or a constant; which means $|\beta(s_0 + k)| \to \infty$. The Riemann Sphere is compact; and therefore bounded; and therefore every subsequence must tend to $\infty$, and therefore the sequence tends to $\infty$. Therefore, so do its derivatives in $s_0$. Therefore:

$$
\lim_{k\to\infty}  \left|\left|\frac{1}{\beta(s+k)}\right|\right|_{\mathcal{N},m} = 0\\
$$
\end{proof}

This gives us a normality theorem which is very valuable to us. Under the norm $||...||_{\mathcal{N},m}$ the process $\tau^n$ is bounded. We are performing a very similar procedure now; as we did at the start of Section \ref{sec13}. We are going to reduce to the linear case. 

We can now invoke the constant:

$$
A_\mu^m = \lim_{k\to\infty} \left|\left|\frac{1}{\mu\beta(s+k)}\right|\right|_{\mathcal{N},m} < \infty\\
$$

Which will play the exact role that $A_\mu$ played; but for a different norm. From Corollary \ref{cor2ASYM2} we know that:

$$
\lim_{k\to\infty} \left|\left|\tau^n(s+k)\right|\right|_{\mathcal{N},m} = 0\\
$$

Because all of the derivatives of $\tau$ in $s_0$ converge to zero. We can now play the game of linearizing the iteration in this norm. Each:

$$
\left|\left|\tau^{n+1}(s+k)\right|\right|_{\mathcal{N},m} \le \left|\left|\log\left(1+\frac{\tau^n(s+k+1)}{\beta(s+k+1)}\right)-\log(1+e^{-\lambda(s+k)})\right|\right|_{\mathcal{N},m}\\
$$

Where,

$$
\left|\left|\log\left(1+\frac{\tau^n(s+k+1)}{\beta(s+k+1)}\right)\right|\right|_{\mathcal{N},m} \le (A_\mu^m+\delta)\left|\left|\tau^n(s+k+1)\right|\right|_{\mathcal{N},m}\\
$$

So that we can reduce to the linearization:

\begin{align*}
    \left|\left|\tau^{n+1}(s+k)\right|\right|_{\mathcal{N},m} &\le (A_\mu^m+\delta)\left|\left|\tau^n(s+k+1)\right|\right|_{\mathcal{N},m}+(1+\delta)\left|\left|e^{-\lambda(s+k)}\right|\right|_{\mathcal{N},m}\\
    &\vdots\\
    &\le (1+\delta)\left|\left|e^{-\lambda(s+k)}\right|\right|_{\mathcal{N},m} \sum_{j=0}^n (A_\mu^m+\delta)^je^{-\lambda j}\\
\end{align*}

And we're given the following lemma:

\begin{lemma}
For all $(s_0,\lambda) \in \mathbb{L}$ such that $\Re \lambda > \log A^m_\mu$; for all $n$ and all $k>K$, for some large $K$:

$$
||\tau^n(s+k)||_{\mathcal{N},m} \le M\\
$$

For some $M \in \mathbb{R}^+$ and $\mathcal{N}$ an arbitrarily small neighborhood about $s_0$.
\end{lemma}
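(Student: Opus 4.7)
The plan is essentially to mimic the argument given for the Normal Theorem in Section \ref{sec13}, but carried out under the projection norm $||\cdot||_{\mathcal{N},m}$ rather than the ordinary supremum norm. The algebraic skeleton of the recursion is identical; what changes is the norm we estimate in, so most of the work goes into checking that the key inequalities still hold after we project onto polynomials of degree $\le m$ centred at $s_0+k$.

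First, I would start from the recursion
\[
\tau^{n+1}(s+k) = \log\!\left(1+\frac{\tau^n(s+k+1)}{\beta(s+k+1)}\right) - \log(1+e^{-\lambda(s+k)})
\]
and bound each piece under $||\cdot||_{\mathcal{N},m}$. For the first piece, Corollary \ref{cor2ASYM2} gives $||\tau^n(s+k)||_{\mathcal{N},m}\to 0$ as $k\to\infty$ (because each derivative of $\tau^n$ in $s_0$ vanishes in the limit), so the argument of the logarithm lies in an arbitrarily small disk around $1$ for $k>K$, and a standard Taylor estimate on $\log(1+u)$ together with the freshly proved bound $\bigl\|1/(\mu\beta(s+k+1))\bigr\|_{\mathcal{N},m}\le A_\mu^m+\delta$ yields
\[
\left\|\log\!\left(1+\frac{\tau^n(s+k+1)}{\beta(s+k+1)}\right)\right\|_{\mathcal{N},m} \le (A_\mu^m+\delta)\,\|\tau^n(s+k+1)\|_{\mathcal{N},m},
\]
with $\delta$ shrinking as $K$ grows. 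For the second piece, $\|\log(1+e^{-\lambda(s+k)})\|_{\mathcal{N},m}\le (1+\delta)\|e^{-\lambda(s+k)}\|_{\mathcal{N},m}/|\mu|$ by the same Taylor expansion.

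Next, iterating the resulting inequality from $k$ down (or equivalently from $0$ up in the index $n$), I would telescope
\[
\|\tau^{n+1}(s+k)\|_{\mathcal{N},m} \le (1+\delta)\|e^{-\lambda(s+k)}\|_{\mathcal{N},m}\sum_{j=0}^{n}(A_\mu^m+\delta)^j e^{-\Re\lambda\, j}.
\]
The hypothesis $\Re\lambda>\log A_\mu^m$ lets me pick $K$ so large that $\delta$ is small enough to guarantee $(A_\mu^m+\delta)e^{-\Re\lambda}=r<1$, in which case the geometric series is dominated by $1/(1-r)$ uniformly in $n$. Setting
\[
M = \frac{1+\delta}{1-r}\,\bigl\|e^{-\lambda(s+K)}\bigr\|_{\mathcal{N},m}
\]
and observing that $\|e^{-\lambda(s+k)}\|_{\mathcal{N},m}$ only decreases as $k$ grows past $K$, we get the uniform bound $\|\tau^n(s+k)\|_{\mathcal{N},m}\le M$ for all $n\in\mathbb{N}$ and $k>K$.

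The main obstacle is the Taylor-linearisation step for $\log$: under $||\cdot||_{\mathcal{N},m}$ we are secretly manipulating truncated power series, so the inequality $|\log(1+u)|\le (1+\delta)|u|$ has to be replaced by an inequality on the $m$-jet. This is safe because $\iota_m$ is a ring homomorphism modulo $(s-s_0-k)^{m+1}$ and the logarithm of a unit power series is computed coefficientwise; but one has to verify that when $\iota_m(\tau^n/\beta)$ has all jet coefficients small (which follows from $\|\tau^n(s+k)\|_{\mathcal{N},m}\to 0$ plus the bound on $1/\beta$), the composition with the convergent $\log$ series produces a jet whose norm is at most $(1+\delta)$ times the norm of the input. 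Once that projection-compatible Taylor estimate is nailed down, every other step is just bookkeeping identical to the proof of the Normal Theorem.
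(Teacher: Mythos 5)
Your proposal is correct and follows essentially the same route the paper itself takes: linearise the $\log$ under the jet norm $||\cdot||_{\mathcal{N},m}$, use the bound $||1/(\mu\beta(s+k+1))||_{\mathcal{N},m}\le A_\mu^m+\delta$ together with the decay from Corollary \ref{cor2ASYM2}, telescope the recursion into a geometric series, and sum it when $\Re\lambda>\log(A_\mu^m+\delta)$. Your explicit remark about $\iota_m$ acting as a ring homomorphism modulo $(s-s_0-k)^{m+1}$ is a worthwhile justification of the $\log$-linearisation step that the paper asserts without comment; everything else matches the paper's derivation preceding the lemma.
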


From this lemma, the result appears more obvious. We are still given the contraction mapping:

$$
\left|\left|\log(1+a(s)) - \log(1+b(s))\right|\right|_{\mathcal{N},m} \le \frac{1+\delta}{|\mu|}||a(s)-b(s)||_{\mathcal{N},m}\\
$$

For $a,b \approx 0$. And from this we can state the Asymptotic Theorem of the third kind.

\begin{theorem}[The Asymptotic Theorem of the third kind]\label{thmASYM3}
For all $m \in \mathbb{N}$ and all $(s_0,\lambda) \in \mathbb{L}$--setting $\iota_m$ about $s_0 + k$. For all $\Re \lambda > \log A_\mu^m$, there exists $K$ such for $k>K$:

$$
\lim_{n\to\infty} \tau^n(s+k) = \tau(s+k)\,\,\text{under the}\,\,||...||_{\mathcal{N},m}\,\,\text{norm}\\
$$

Where $\mathcal{N}$ is a neighborhood about $s_0$; and:

$$
\lim_{k\to\infty} \left|\left|\frac{1}{\mu \beta(s+k)}\right|\right|_{\mathcal{N},m} = A_\mu^m\\
$$
\end{theorem}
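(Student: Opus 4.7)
The proof should mirror, essentially line for line, the argument for the Convergence Theorem \ref{thmABLCVG}, with every supremum norm $||\cdot||_{\mathcal{N}}$ replaced by the truncated norm $||\cdot||_{\mathcal{N},m}$. The preparatory material immediately preceding the theorem has already done most of the heavy lifting: the boundedness lemma gives $||\tau^n(s+k)||_{\mathcal{N},m}\le M$ for $k>K$, the constant $A_\mu^m$ plays the exact role that $A_\mu$ played before, and the displayed Lipschitz bound $||\log(1+a)-\log(1+b)||_{\mathcal{N},m}\le \frac{1+\delta}{|\mu|}||a-b||_{\mathcal{N},m}$ is the only genuinely new analytic input beyond what was already established in Section \ref{sec13}.

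First I would unfold the recursion and write
$$\tau^{n+1}(s+k) - \tau^n(s+k) = \log\!\left(1 + \frac{\tau^n(s+k+1)}{\beta(s+k+1)}\right) - \log\!\left(1 + \frac{\tau^{n-1}(s+k+1)}{\beta(s+k+1)}\right),$$
then apply the contraction inequality inside $||\cdot||_{\mathcal{N},m}$. The multiplicative factor that appears is $\bigl|\bigl|\tfrac{1}{\mu\beta(s+k+1)}\bigr|\bigr|_{\mathcal{N},m}$, which by the defining limit of $A_\mu^m$ is at most $A_\mu^m+\delta''$ once $k>K$, with $\delta''$ as small as we like. Iterating $n$ times bounds the difference by $(A_\mu^m+\delta'')^n$ multiplied by the base case $||\tau^1 - \tau^0||_{\mathcal{N},m} = ||\log(1+e^{-\lambda(s+k+n)})||_{\mathcal{N},m}$, which itself decays like $e^{-\Re\lambda(k+n)}$ uniformly on $\mathcal{N}$.

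Choosing $\Re\lambda > \log(A_\mu^m+\delta'')$ produces a ratio $r = e^{-(\Re\lambda - \log(A_\mu^m+\delta''))} \in (0,1)$, giving $||\rho^{n+1}(s+k)||_{\mathcal{N},m}\le C r^n$ for some $C\in\mathbb{R}^+$. Summing the telescoping identity $\tau^n = \sum_{j=1}^n \rho^j$ yields a Cauchy sequence in the $||\cdot||_{\mathcal{N},m}$ norm, whose limit is $\tau(s+k)$. This completes the argument once the preparatory lemma is in hand.

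The one place that requires care --- and which I expect to be the main obstacle --- is justifying the Lipschitz estimate for $\log(1+\cdot)$ in the truncated norm. Since $||\cdot||_{\mathcal{N},m}$ is the sup norm of the degree-$m$ Taylor projection $\iota_m$, before invoking the pointwise bound one must verify that $\iota_m$ commutes sufficiently well with composition: that $\iota_m\bigl(\log(1+f)-\log(1+g)\bigr)$ depends only on $\iota_m(f-g)$ and on the values of $f,g$ through degree $m$, and that after truncation the ordinary Lipschitz estimate $\frac{1+\delta}{|\mu|}$ survives with the same $\delta\to 0$ as the arguments shrink. Once that algebraic fact about the local ring $\mathcal{H}_{s_0+k}/(s-s_0)^{m+1}$ is in place, the entire Banach-style contraction argument from Section \ref{sec13} transplants with only cosmetic changes, and the geometric rate $r$ together with the telescoping sum delivers the stated convergence.
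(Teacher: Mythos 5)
Your proposal reproduces the paper's argument essentially line for line: both iterate the contraction $||\tau^{n+1}-\tau^n||_{\mathcal{N},m}\le(A_\mu^m+\delta)\,||\tau^n-\tau^{n-1}||_{\mathcal{N},m}$ down to the base term $||\log(1+e^{-\lambda(s+k+n)})||_{\mathcal{N},m}$, obtain the geometric rate $r=(A_\mu^m+\delta)e^{-\Re\lambda}<1$ under $\Re\lambda>\log A_\mu^m$, and close with a Cauchy-sum estimate on $\sum_j\rho^j$ in the truncated norm. The only cosmetic differences are that the paper opens by dismissing $A_\mu^m\neq 0$ as already covered by Theorem \ref{thmABLCVG} and runs the contraction only for $A_\mu^m=0$, and that the paper merely asserts (rather than verifies) that the $\log(1+\cdot)$ Lipschitz bound passes through $\iota_m$ --- the one step you correctly flag as the genuinely new analytic ingredient.
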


\begin{proof}
This theorem is a weakening of The Convergence Theorem \ref{thmABLCVG} for $A_\mu^m \neq 0$; therefore we only need to prove the case of $A_\mu^m = 0$. Beginning with the inequality:

\begin{align*}
    ||\tau^{n+1}(s+k)- \tau^n(s+k)||_{\mathcal{N},m} &= \left|\left|\log\left(1+\frac{\tau^n(s+k+1)}{\beta(s+k+1)}\right) - \log\left(1+\frac{\tau^{n-1}(s+k+1)}{\beta(s+k+1)}\right)\right|\right|_{\mathcal{N},m}\\
    &\le (A_\mu^m + \delta) ||\tau^{n}(s+k+1)- \tau^{n-1}(s+k+1)||_{\mathcal{N},m}\\
    &\vdots\\
    &\le (A_\mu^m + \delta)^n ||\log(1+e^{-\lambda(s+k+n)})||_{\mathcal{N},m}\\
\end{align*}

We have to justify this inequality; and the result follows. What is meant by this inequality is a bit of a word salad. Each of these norms are about a Taylor polynomial around $s_0 + k$. Each of the terms are multiplied by $e^{-\lambda n}$; and that's all we care about. Giving us:

$$
||\log(1+e^{-\lambda(s+k+n)})||_{\mathcal{N},m} \le Ce^{-\Re \lambda n}\\
$$

Now, $\delta > 0$ can be made as small as possible by increasing $k>K$ as large as we want. Choose a $K$ such that $\Re\lambda > \log(A_\mu^m + \delta)$ (which always exists); then set $0<r = (A_\mu^m + \delta)e^{-\Re \lambda} < 1$:

$$
 ||\tau^{n+1}(s+k)- \tau^n(s+k)||_{\mathcal{N},m} \le Cr^n\\
$$

Thereby:

$$
||\tau^l(s+k) - \tau^n(s+k)||_{\mathcal{N},m} \le \sum_{j=n}^{l-1} Cr^j\\
$$

Setting:

$$
\sum_{j=n}^{l-1} Cr^j < \epsilon\\
$$

Arbitrarily small for $n,l > N$; completes the proof.
\end{proof}

This theorem provides the quick corollary:

\begin{corollary}
For $\mu > 1/e$ and $\lambda \in \mathbb{R}^+$ the function:

$$
F(x) = \beta(x) + \tau(x)\\
$$

For

$$
e^{\mu F(x)} = \exp F(x) = F(x+1)\\
$$

Is infinitely differentiable for $x > x_0$ where $F(x_0+1) = 0$; but it's nowhere holomorphic. 
\end{corollary}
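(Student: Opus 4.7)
The plan is to split the corollary into a smoothness half, handled by the Asymptotic Theorem of the third kind (Theorem \ref{thmASYM3}), and a nowhere-holomorphic half, handled by the Generalized Levenstein Theorem (Theorem \ref{thmGenLev}). The structural fact powering both halves is that $\mu > 1/e$ forces $b = e^\mu > e^{1/e}$, so $e^{\mu z}$ admits no real attracting fixed point and every real forward orbit of a small positive value diverges to $+\infty$ super-exponentially. Because $\mu, \lambda \in \mathbb{R}^+$, the defining composition keeps $\beta : \mathbb{R} \to \mathbb{R}^+$, and the tracking asymptotic $\beta(x+k) \sim \exp^{\circ k}(\beta(x))$ established in the proof of Theorem \ref{thmINVABLSHTH} then forces $\beta(x+k) \to +\infty$ at tetrational speed for every $x > x_0$. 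In particular $1/\beta(x+k) \to 0$, so Theorem \ref{thmWDSTNC} immediately places every such $x$ in the weak Julia set $\mathcal{B}$.

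For the smoothness half, fix $x_1 > x_0$ and $m \in \mathbb{N}$. I would show that
$$ A_\mu^m = \limsup_{k \to \infty} \left\| \frac{1}{\mu \beta(s+k)} \right\|_{\mathcal{N}, m} = 0, $$
where $\mathcal{N}$ is a small disk around $x_1$. The $j$-th Taylor coefficient of $1/\beta(\cdot + k)$ at $x_1$ is, by the Fa\`a di Bruno formula, a rational expression in $\beta^{(i)}(x_1 + k)$ for $i \le j$ divided by $\beta(x_1 + k)^{j+1}$. Differentiating the functional equation $\beta(x+1) = e^{\mu \beta(x)}/(1 + e^{-\lambda x})$ by induction on $j$ shows $|\beta^{(j)}(x_1 + k)| \le C_j \mu^{jk} \beta(x_1 + k)^j$ at leading order, and this is beaten by the extra denominator $\beta(x_1 + k)$ because $\beta(x_1 + k)$ grows tetrationally. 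Hence $A_\mu^m = 0$, and the hypothesis $\Re \lambda > \log A_\mu^m$ of Theorem \ref{thmASYM3} holds automatically for every $\lambda \in \mathbb{R}^+$. This yields $\tau^n(s+k) \to \tau(s+k)$ in $\|\cdot\|_{\mathcal{N}, m}$, which by the equivalence between $\|\cdot\|_{\mathcal{N}, m}$ and the derivative-sum norm $\|\cdot\|_m$ at the center delivers convergence of each $(\tau^n)^{(j)}(x_1)$ for $j \le m$. Since the bounds are locally uniform in the center $x_1$ on compacta of $(x_0, \infty)$ and every $\tau^n$ is holomorphic, the classical theorem that derivatives pass through uniform limits yields $\tau \in C^m$ near $x_1$; varying $m$ and $x_1$, the sum $F = \beta + \tau$ belongs to $C^\infty$ on $(x_0, \infty)$.

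The nowhere-holomorphic half is then direct. By the first paragraph every $x_1 > x_0$ lies in $\mathcal{B}$, so Theorem \ref{thmGenLev} forbids $\tau$ from being holomorphic in any neighborhood of $x_1$, and adding the entire function $\beta$ cannot create analyticity. Thus $F$ is nowhere holomorphic on $(x_0, \infty)$. The principal obstacle in the argument is the pointwise derivative bound $|\beta^{(j)}(x_1 + k)| \le C_j \mu^{jk} \beta(x_1 + k)^j$ used to push $A_\mu^m$ down to zero; this is exactly where the tension between smooth and holomorphic surfaces, since the bound holds on the real line (allowing every Taylor polynomial to converge) but cannot be propagated to a genuine complex disk around $x_1$, which is the quantitative content of Theorem \ref{thmGenLev}.
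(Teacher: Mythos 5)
Your overall architecture matches the paper's: use Theorem \ref{thmASYM3} (via $A_\mu^m=0$, the equivalence of $\|\cdot\|_{\mathcal{N},m}$ with the derivative-sum norm $\|\cdot\|_m$, and passage to the limit) for smoothness, and membership of $(x_0,\infty)$ in $\mathcal{B}$ together with Theorem \ref{thmGenLev} for nowhere-holomorphy. You make explicit two invocations the paper leaves implicit — Theorem \ref{thmWDSTNC} to place $x>x_0$ in $\mathcal{B}$, and Theorem \ref{thmGenLev} for the non-analyticity — which is a clarity improvement. The decomposition and key lemmas are, however, the same as the paper's.

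There are two concrete flaws worth flagging. First, your claimed intermediate bound
$$|\beta^{(j)}(x_1+k)| \le C_j\,\mu^{jk}\,\beta(x_1+k)^j \quad\text{``at leading order''}$$
is not correct. Differentiating $\beta(x+1)=e^{\mu\beta(x)}/(1+e^{-\lambda x})$ gives $\beta'(x+1)\approx\mu\beta'(x)\,\beta(x+1)$, hence telescoping yields
$$\beta'(x+k)\approx\mu^{k}\,\beta'(x)\,\beta(x+k)\prod_{c=1}^{k-1}\beta(x+c),$$
so the derivative carries a super-exponentially growing factor $\prod_{c<k}\beta(x+c)$ that your bound omits; a similar accumulated product appears in every higher $\beta^{(j)}$. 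The conclusion $A_\mu^m = 0$ is nonetheless correct, because the extra product is still swamped by the final $\beta(x_1+k)^{j+1}$ in the denominator (since $\beta(x+k)=\exp(\mu\beta(x+k-1))$ dominates any finite product of the earlier terms), and in fact the paper already establishes $\lim_{k\to\infty}\|1/\beta(s+k)\|_{\mathcal{N},m}=0$ for $s_0\in\mathcal{B}$ in the lemma preceding Theorem \ref{thmASYM3} — it would be cleaner to cite that lemma than to rederive a bound you have gotten wrong. Second, you also omit the paper's step establishing that $x_0$ exists at all: one must use that $F$ is real-valued, monotone, and divergent on the real orbit of $0$ to locate a point with $F(x_0+1)=0$ (hence a logarithmic singularity at $x_0$), which is what makes the domain $(x_0,\infty)$ the right one; treating $x_0$ as simply given skips a genuine part of the corollary. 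Finally, a minor slip: $\beta$ is not entire (it has essential/pole singularities along $\lambda(j-s)=(2k+1)\pi i$); it is merely holomorphic in a neighborhood of $\mathbb{R}$ when $\lambda\in\mathbb{R}^+$, which is all you actually need.
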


\begin{proof}
Since the norms $||a||_m$ and $||a||_{\mathcal{N},m}$ are equivalent; convergence under either is equivalent. Therefore all the derivatives up to $m$ of $\tau^n(x)$ converge; since $m$ is arbitrary, all derivatives converge. Since $F$ is real valued and monotone and divergent as $x\to\infty$, there's a point $x_0+N$ such that $F(x_0+N) = \exp^{\circ N-1}(0)$ (recalling in this case $\exp^{\circ N-1}(0)$ gets arbitrarily large); which means $F(x_0 + 1) = 0$; where $F(x_0) = \log(0) = -\infty$. This function is nowhere holomorphic because every $x>x_0$ is in the weak Julia set.
\end{proof}

\chapter{The Elliptic View of Iterated Exponentials}\label{chp5}

\section{A fundamental domain of $\beta$}

\begin{remark}
This is where we are going to play gloves off. Much of this work will be far more complicated than earlier work. This section requires a strong understanding of complex analysis, and additionally, complex dynamics. We will try to be as forward and explanatory as possible; but we aren't going to hold your hand. Additionally; this section serves as supplementary commentary on our discussion. For this reason, we will not argue too extensively; instead we will be focused on fleshing out deeper avenues of research.
\end{remark}

The function $\beta$ has a much tighter relationship to iterated exponentials; and its dynamics; than we've discussed. But first, it's helpful to reduce the size of the domain we're working with. Such, it is possible to find a single  simply connected domain, in which $\beta_{\lambda,\mu}$ is fully determined. The function $\beta$ has a fundamental domain of:

$$
\mathcal{H} = \{s \in \mathbb{C}\,|\,0 \le \Re(s) < 1,\,0 \le \Im(s) < |\Im( 2 \pi i/\lambda)|\}\\
$$

Such the entire complex plane is constructed from iterations of the following identities:

\begin{align*}
    \beta(s\pm2\pi i / \lambda) &= \beta(s)\\
    \beta(s+1) &= \frac{\exp \beta(s)}{1+e^{-\lambda s}} = \frac{e^{\mu\beta(s)}}{1+e^{-\lambda s}}\\
    \beta(s-1) &= \log(\beta(s)) + \log(1+e^{-\lambda(s-1)})\\
\end{align*}

We have described how this object converges; but we've yet to really comment on its geometric structure when related to $e^{\mu z}$. Alors, to study the $\beta$ function, we need only look at $\beta(s) : \mathcal{H} \to \mathbb{C}/\{0\}$. But this fundamental domain is not the domain we are interested in exactly. We would like to shear this domain by $\lambda$.

We will begin by describing the best fundamental domain possible. This begins by taking the points: $1+\pi i/\lambda$, $1-\pi i/\lambda$, $-\pi i/\lambda$, $\pi i/\lambda$, and draw the parallelogram connecting them. We will call this domain $\mathcal{Q}$.

Now under the generation of the above iterative formulas $\mathcal{Q} = \mathcal{H}$; of which, both of these domains are fundamental domains. And every point in the orbit of $\mathcal{Q}$ equals every point in the orbit of $\mathcal{H}$, and both cover the orbit of $\mathbb{C}$.

We can refer to the tetrations these functions induce based solely on the domain $\mathcal{Q}$. The weak Julia set and weak Fatou sets can be reduced to their behaviour on $\mathcal{Q}$. Such, $\tau$ is holomorphic or asymptotic on domains in $\mathcal{Q}$.

We are going to look at the tetration function induced on $\mathcal{Q}$, and we are going to construct a function on the torus from it. To do this, we're going to have to look at Kneser's tetration briefly; and look at regular iteration of tetration. This produces what is usually called the $\theta$-mapping.

\section{The $\theta$-mapping in general situations}

Let us fix a base of the exponential $e^{\mu z}$; and let's assume there exists two tetration functions $F_1$ and $F_2$, such that $F_1 \neq F_2$. Let's additionally assume that we can construct an inverse function to $F_1$, and let's call it $S_1$. Then, what is known as a theta mapping is:

$$
\theta(s) = S_1(F_2(s)) - s\\
$$

Which is a periodic function, satisfying:

$$
\theta(s+1) = \theta(s)\\
$$

This mapping is used to measure the difference between various tetrations; and is considered a manner of measuring how close two tetrations are to each other. This can be quite laborious, considering that $S_1$ can be very misbehaved. But what is helpful, is that we need only invert a single tetration to construct a $\theta$ mapping.

This can be done rather easily if we stick to using regular iterations. For that reason, our discussion of $\theta$ terms will take two forms. The first form being when $b = e^{\mu}$ is in the interior of the Shell-Thron region; and when $b >e^{1/e}$. We won't talk about the other possible Abel functions that can be used to create our $\theta$ mapping. Though, in truth, a $\theta$ mapping is a mapping between two tetrations, we will focus only on the Abel function induced from regular iteration.

For this conversation, we will need Koenig's Linearization Theorem. A proof can be found in Milnor \cite{milnor_2000}.

\begin{theorem}[Koenig's Linearization Theorem]\label{thmKOE}
Let $f$ be a holomorphic function with a fixed point $z_0$ such $\gamma = f'(z_0)$ and:

$$
0 < |\gamma| < 1\\
$$

Then there exists a unique function $\phi: \mathcal{A} \to \mathbb{C}$ such $\phi'(z_0) = 1$ and:

$$
\phi(f(z)) = \gamma \phi(z)\\
$$

Where $\mathcal{A}$ is the immediate basin of attraction of the fixed point $z_0$.
\end{theorem}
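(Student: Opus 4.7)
The plan is to translate the fixed point to the origin and construct $\phi$ as a limit of rescaled iterates. Assume without loss of generality $z_0 = 0$, so $f(z) = \gamma z + g(z)$ with $g(z) = \mathcal{O}(z^2)$ near $0$. The natural candidate is
$$\phi(z) = \lim_{n\to\infty} \gamma^{-n} f^{\circ n}(z),$$
which is precisely the rescaling that turns the action of $f$ into multiplication by $\gamma$. The work is to show this limit exists on a small disk about $0$, extends uniquely to the immediate basin $\mathcal{A}$, and is the only such solution.

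First I would establish convergence on a small disk $\mathcal{D}_r = \{|z| < r\}$. Choose $r$ small enough that $|f(z)| \le c|z|$ on $\mathcal{D}_r$ for some $c$ with $|\gamma| < c < |\gamma|^{1/2}$, which is feasible since $|\gamma| < 1$. Then $|f^{\circ n}(z)| \le c^n|z|$, so the orbit stays in $\mathcal{D}_r$. Writing $\phi_n = \gamma^{-n} f^{\circ n}$, the telescoping differences
$$\phi_{n+1}(z) - \phi_n(z) = \gamma^{-n-1}\left(f(f^{\circ n}(z)) - \gamma f^{\circ n}(z)\right)$$
are bounded by $C|\gamma|^{-n-1} |f^{\circ n}(z)|^2 \le C|\gamma|^{-n-1} c^{2n} |z|^2$. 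Since $c^2/|\gamma| < 1$, this is a uniformly summable geometric series, so $\phi$ is holomorphic on $\mathcal{D}_r$.

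The functional equation follows immediately from
$$\phi(f(z)) = \lim_n \gamma^{-n} f^{\circ(n+1)}(z) = \gamma\,\phi(z),$$
and $\phi_n(z) = z + \mathcal{O}(z^2)$ uniformly in $n$ (the higher-order terms are governed by the same geometric bound), which gives $\phi'(0) = 1$. To extend to the immediate basin $\mathcal{A}$, pick $z \in \mathcal{A}$, let $n$ be large enough that $f^{\circ n}(z) \in \mathcal{D}_r$, and define $\phi(z) = \gamma^{-n} \phi(f^{\circ n}(z))$. The Schr\"{o}der equation on $\mathcal{D}_r$ makes this independent of the particular $n$ used, so $\phi$ is unambiguously defined and holomorphic on all of $\mathcal{A}$.

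For uniqueness, suppose $\psi$ is another solution with $\psi'(0) = 1$. The composite $h = \psi \circ \phi^{-1}$ is holomorphic near $0$, fixes $0$, has $h'(0) = 1$, and satisfies $h(\gamma w) = \gamma h(w)$; expanding $h(w) = w + \sum_{k\ge 2} a_k w^k$ forces $a_k(\gamma^k - \gamma) = 0$ for every $k \ge 2$, and $|\gamma| < 1$ guarantees $\gamma^k \ne \gamma$, so all $a_k$ vanish and $h$ is the identity. The main obstacle is really the convergence estimate on $\mathcal{D}_r$: the delicate choice $c^2 < |\gamma|$ is what upgrades the naive orbit bound $|f^{\circ n}(z)| \le c^n|z|$ into summability of $\phi_{n+1} - \phi_n$. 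Every other step -- the functional equation, the extension by pullback through $f^{\circ n}$, and the Taylor-coefficient uniqueness -- is routine once that bound is in hand.
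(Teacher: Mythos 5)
The paper does not actually prove Koenig's Linearization Theorem; it states it and refers the reader to Milnor \cite{milnor_2000} for a proof. Your argument is correct and is in fact essentially the proof given in that reference: define $\phi_n = \gamma^{-n} f^{\circ n}$, pick a small disk where $|f(z)| \le c|z|$ with $|\gamma| < c < |\gamma|^{1/2}$, use the quadratic estimate on $f(w) - \gamma w$ to get the geometric bound $|\phi_{n+1}-\phi_n| \le C(c^2/|\gamma|)^n|\gamma|^{-1}|z|^2$, pass to the limit, push the Schr\"{o}der equation through, and extend to the immediate basin by pullback. The key calculation is right — the delicate choice $c^2 < |\gamma|$ is exactly what makes the telescoping series summable — and the uniqueness argument via Taylor coefficients of $h(\gamma w) = \gamma h(w)$ is standard and correct. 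The only small gap is that the uniqueness argument as written establishes $\psi = \phi$ only near $z_0$; you should add a sentence noting that both $\phi$ and $\psi$ are determined on all of $\mathcal{A}$ by their germ at $z_0$ via $\phi(z) = \gamma^{-n}\phi(f^{\circ n}(z))$, so agreement near $z_0$ propagates to $\mathcal{A}$. With that one-line addition the proof is complete and matches the cited source.
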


Some quick notes on this theorem are needed; specifically, a definition of the immediate basin of attraction is in order. This is simply defined as:

\begin{definition}
The immediate basin of attraction $\mathcal{A}$ of $z_0$ is the maximal connected domain about the fixed point $z_0$ such:

$$
\lim_{n\to\infty} f^{\circ n}(z_0) = z_0\\
$$
\end{definition}

We can point to Devaney \cite{devaney_2021} for a proof that this domain is simply connected; despite being rather chaotic on the boundary. This domain will play the role of the domain of our Abel function. From this function, we can construct an Abel function.

If we take:

$$
\alpha(z) = \ln \phi(z)/\ln(\gamma)
$$

Then,

$$
\alpha(f(z)) = \alpha(z) + 1\\
$$

This function will be holomorphic almost everywhere on $\mathcal{A}/\{z_0\}$. To be precise, it will be holomorphic on $\mathcal{A}$ excluding a branch cut spawned from the point $\phi(z_0) = 0$. This won't matter much to us; and is largely a sigh of relief because it's a much easier branch cut then what we're used to so far in this paper. And as our function will avoid the fixed point excepting at infinity; it will be largely negligible.

Now, if we assume that $\alpha$ is an Abel function for $f(z) = e^{\mu z}$; then any tetration function $F(s)$ constructs a $\theta$ function:

$$
\theta(s) = \alpha (F(s)) - s\\
$$

What's particularly interesting is when $F$ has a period. By which, let us assume that $F(s + 2\pi i / \lambda) = F(s)$. Then, we must have $\theta$ satisfying:

\begin{align*}
    \theta(s+1) = \theta(s)\\
    \theta(s+2\pi i/\lambda) = \theta(s)-2\pi i/\lambda\\
\end{align*}

Which implies it is periodic, and ``nearly'' linear. This instantly tells us there must be either singularities, or branch cuts, or domains cut from its fundamental domain. That fundamental domain is again $\mathcal{Q}$. Understanding how $\theta$ behaves, gives us a very quick manner of understanding how $F$ is behaved.

\section{Treating $\theta$ mappings up to elliptic mappings}

The point of this section is to talk about the permissible periods of $F_\lambda(s)$ for $e^\mu$ in the Shell-Thron region; and comparing how unique they are. As we've found from The Inverse Abel Function on the Shell-Thron region \ref{thmINVABLSHTH}; we know that a holomorphic function can be constructed on the weak Fatou set, so long as $\Re \lambda > -\ln|\mu \omega|$; $\omega$ is the attracting fixed point. We know that $0 <|\mu \omega| < 1$, so the values of $\lambda$ where our construction fails are a significant vertical strip.

To begin the foray, we can create the fundamental domain $\mathcal{Q}^* \subset \mathcal{Q}$ such that, $F_\lambda$ is holomorphic on $\mathcal{Q}^*$ up to branch cuts. This domain is precisely the weak Fatou set $\mathcal{P}$ and its intersection with $\mathcal{Q}$. Thereby, $\mathcal{Q}^* = \mathcal{P} \cap \mathcal{Q}$. Thereby,

$$
\theta(s) = \alpha(F_\lambda(s)) - s\\
$$

Is holomorphic on $\mathcal{Q}^*$; but since this is $1$-periodic, and eventually the further right we go the branch cuts of $F$ disappear, we must have that $\theta$ is holomorphic on $\mathcal{Q}^*$; and there are no branch cuts induced by $F$. We recall that $\alpha$ is the Abel function of $e^\mu$ constructed through the regular iteration method.

We wish to show two things in this section. The first being; a natural boundary appears about the point $\lambda \to - \ln (\mu \omega)$; which invokes a period $2\pi i /\lambda = -2\pi i/\ln(\mu \omega)$. This is the period of the regular iteration. Thereupon the goal is; it's impossible to stretch the width of our fundamental domain further from this.

This means the regular iteration serves as a sort of boundary to how much the period can be stretched. It's helpful to think of this with the case $\mu = \ln(2)/2$, $b = \sqrt{2}$. To get a look of the regular iteration; we turn to Sheldon Levenstein's Fatou.gp \cite{levenstein_2011}. A graph of the regular iteration is given in Figure \ref{fig:reg_iter_root_two}.

We've also provided a graph of $F_\lambda$ that is very close to the regular iteration. The value of the period of the regular iteration is $-2 \pi i/\ln\ln(2) = 17.143148i$. The value of the period we will display in the next figure, is $16i$. This is the shrinking of the fundamental domain; which causes some fractal errors; but retains much of the same picture. This is seen in Figure \ref{fig:approx_reg_iter_root_two}.

\begin{figure}
    \centering
    \includegraphics[scale = 0.3]{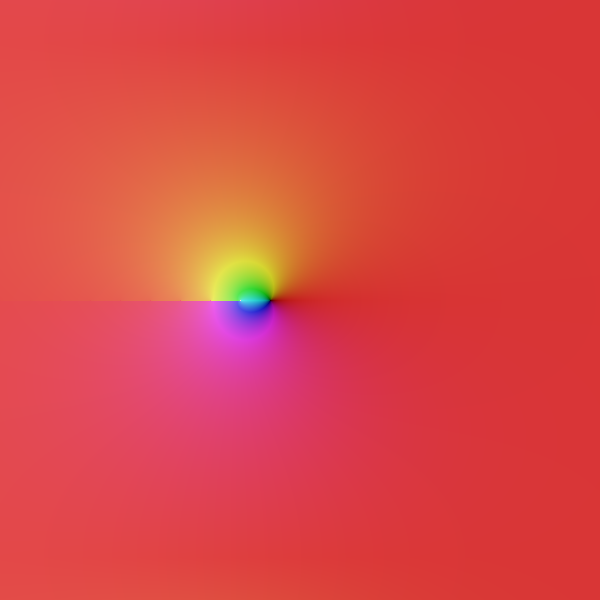}
    \caption{This is a graph of the regular iteration for $b = \sqrt{2}$. This means it's the function $\text{tet}_{\sqrt{2}}(s) = \phi^{-1}(e^{\ln\ln(2) (s-s_0)})$; with $\text{tet}_{\sqrt{2}}(0) = 1$ and $\text{tet}_{\sqrt{2}}(s+1) = \sqrt{2}^{\text{tet}_{\sqrt{2}}(s)}$; while $\text{tet}_{\sqrt{2}}$ is real-valued and $2\pi i/\ln(\ln(2))$ periodic. This function is graphed over the domain $|\Re(s)|,|\Im(s)| < 10$. This was drawn using Levenstein's Fatou.gp \cite{levenstein_2011}; but I tweaked the protocols a bit.}
    \label{fig:reg_iter_root_two}
\end{figure}

\begin{figure}
    \centering
    \includegraphics[scale = 0.35]{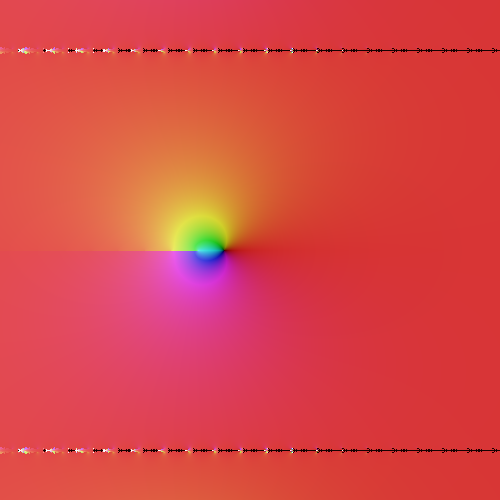}
    \caption{This is a graph of the beta method $F_\lambda(s) = \beta_\lambda(s) + \tau(s)$ for $\mu = \log(2)/2$. We have set $\lambda = 0.392699082$ which is just a bit away from $-\ln\ln(2) = 0.366512920$--the natural border. We have normalized this function, so $F_\lambda(0) = 1$, and is graphed over the domain $|\Re(s)|,|\Im(s)| < 10$}
    \label{fig:approx_reg_iter_root_two}
\end{figure}

The regular iteration for this base, has a period of $-2 \pi i/\ln\ln2$. Then what we are saying is that the width of our fundamental domain cannot exceed the width of this fundamental domain. Where the width is simply the imaginary part of the period (in this case it's purely imaginary). By which, we can think of the regular iteration as the limit of the various $F_\lambda$. 

This is a very difficult concept to cognize; but it is a surprisingly simple argument when you know what you are doing. By, of course, if we relinquish ourselves to $\theta$ mappings; it's a statement on the maximum width of our parallelogram $\mathcal{Q}^*$. Thereso, the manner this statement works is, 
$\Im\left(2\pi i/\lambda\right) = \Im\left(2 \pi i/\ln(\mu \omega)\right)$ serves as a natural boundary to all elliptic $\theta$ mappings.\\

To begin, we provide an alternative representation of the $\theta$ mapping.

\begin{theorem}\label{thmTHTBETA}
The $\theta$-mapping holomorphic on $\mathcal{Q}^*$ can be alternatively represented as:

$$
\theta(s) = \lim_{n\to\infty}\alpha(\beta(s+n))  -s - n\\
$$
\end{theorem}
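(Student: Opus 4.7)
The plan is to start from the definition $\theta(s) = \alpha(F_\lambda(s)) - s$ and use the Abel functional equation to push the argument out to infinity, replacing $F_\lambda$ by $\beta$ at the cost of a vanishing error. Iterating $\alpha(\exp z) = \alpha(z) + 1$ against $F_\lambda$ yields $\alpha(F_\lambda(s)) = \alpha(F_\lambda(s+n)) - n$ for every $n \in \mathbb{N}$. Substituting $F_\lambda(s+n) = \beta(s+n) + \tau(s+n)$ gives
$$
\theta(s) = \alpha\bigl(\beta(s+n) + \tau(s+n)\bigr) - n - s.
$$
The theorem therefore reduces to showing
$$
E_n(s) := \alpha\bigl(\beta(s+n) + \tau(s+n)\bigr) - \alpha(\beta(s+n)) \longrightarrow 0 \quad \text{as } n \to \infty,
$$
uniformly on compact subsets of $\mathcal{Q}^*$.

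The main obstacle is that $\alpha$ has a logarithmic singularity precisely at the attracting fixed point $\omega$, while both arguments above approach $\omega$ as $n \to \infty$ (by Theorem \ref{thmINVABLSHTH} and the attracting dynamics on the weak Fatou set). Thus a naive Lipschitz bound fails. My plan is to invoke the explicit form $\alpha(z) = \ln \phi(z)/\ln \gamma$ with $\gamma = \mu\omega$ provided by Koenig's Linearization Theorem \ref{thmKOE}, from which $\alpha'(z) = \phi'(z)/(\phi(z) \ln \gamma)$, and $\phi(z) = (z-\omega) + O((z-\omega)^2)$ near $\omega$. The mean value theorem applied along a short segment connecting $\beta(s+n)$ to $\beta(s+n)+\tau(s+n)$ then yields the estimate
$$
|E_n(s)| \le C \cdot \frac{|\tau(s+n)|}{|\beta(s+n) - \omega|},
$$
for a constant $C$ depending on $\gamma$ and valid once $n$ is large enough to keep both arguments in a small neighborhood of $\omega$ (where $\phi$ is biholomorphic).

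The final step is a rate comparison of the numerator and denominator. By the Linearization Theorem \ref{thmLIN} and the Convergence Theorem \ref{thmABLCVG}, $|\tau(s+n)| = \mathcal{O}(e^{-\Re\lambda\, n})$ uniformly on compact neighborhoods in $\mathcal{Q}^*$. On the other hand, $\omega$ is geometrically attracting with multiplier $\gamma = \mu\omega$, so the orbit $\beta(s+n)$ converges to $\omega$ at rate $|\beta(s+n) - \omega| \asymp |\mu\omega|^{n}$, with the asymptotic $\beta(s+n) \sim \exp^{\circ n}(\beta(s))$ established inside the proof of Theorem \ref{thmINVABLSHTH}. Combining these gives
$$
|E_n(s)| \le C \cdot \bigl(|\mu\omega|^{-1}\, e^{-\Re\lambda}\bigr)^{n},
$$
which decays geometrically precisely under the hypothesis $\Re\lambda > -\ln|\mu\omega| = \log A_\mu$ that already guarantees holomorphy of $F_\lambda$ on $\mathcal{Q}^*$. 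The bound is uniform on compact subsets, so $\alpha(\beta(s+n)) - s - n \to \theta(s)$ compactly normally on $\mathcal{Q}^*$, proving the identity. The hardest step is the rate comparison above, since it is the place where the Shell-Thron geometry $|\mu\omega| < 1$ and the $\lambda$-threshold conspire to tame the singularity of $\alpha$.
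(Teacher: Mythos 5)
Your proposal takes essentially the same route as the paper: use the Abel identity $\alpha(\exp z)=\alpha(z)+1$ and the tetration equation to push $\theta(s)=\alpha(F_\lambda(s+n))-s-n$, then trade $F_\lambda(s+n)$ for $\beta(s+n)$ at the cost of a vanishing error, controlled via the local behaviour of $\alpha$ near the fixed point $\omega$. The difference is that you actually carry out the rate comparison that the paper elides. The paper asserts the flat bound $\|\alpha(F(s+n))-\alpha(\beta(s+n))\|_{\mathcal{N}}\le C\,e^{-\Re\lambda\,n}$ and then remarks, somewhat loosely, that $\alpha$ ``grows at worst like $\mathcal{O}(1/(s-\omega))$,'' without feeding that blowup back into the estimate. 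Your version does feed it in: writing $\alpha=\ln\phi/\ln\gamma$ via Koenig and using $\phi(z)\sim z-\omega$ gives $\alpha'(\xi)\asymp 1/(\xi-\omega)$, the orbit contracts at rate $|\beta(s+n)-\omega|\asymp|\mu\omega|^{n}$, and $|\tau(s+n)|=\mathcal{O}(e^{-\Re\lambda\,n})$, yielding the honest bound $|E_n(s)|\le C\,\bigl(|\mu\omega|^{-1}e^{-\Re\lambda}\bigr)^{n}$. This is the estimate the paper should have stated; it decays exactly because $\Re\lambda>-\ln|\mu\omega|=\log A_\mu$, and it makes visible why the $\lambda$-threshold that guarantees $F_\lambda$ exists is also precisely what tames the logarithmic singularity of $\alpha$. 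So: same decomposition and same key ideas, but your bookkeeping is tighter and actually closes the argument where the paper's bound, read literally, does not.

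One small caveat: the mean-value-theorem step implicitly requires the segment from $\beta(s+n)$ to $F_\lambda(s+n)$ to stay inside the neighbourhood of $\omega$ on which $\phi$ is biholomorphic and $|\phi(z)|\asymp|z-\omega|$; you gesture at this (``once $n$ is large enough'') but it deserves a line, since $|\tau(s+n)|$ needs to be small \emph{relative to} $|\beta(s+n)-\omega|$, and that is exactly the ratio you are already bounding, so the argument is not circular but should be stated in the right order: first choose $n$ so the ratio is $<1/2$, then apply the MVT.
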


\begin{proof}
The function $\theta$ is $1$-periodic and initially described as:

$$
\theta(s) = \alpha(F(s))-s\\
$$

O\`{u}, 

\begin{align*}
    \theta(s+1) &=  \alpha(F(s+1))-s-1\\
    &=\alpha\left(e^{\mu F(s)}\right) -s -1\\
    &= \alpha(F(s)) + 1 -s - 1\\
    &= \alpha(F(s)) - s = \theta(s)\\
\end{align*}

And thereby:

$$
\theta(s) = \alpha(F(s+n)) - s - n\\
$$

But $F(s+n) = \beta(s+n) + \tau(s+n)$. The function $\tau(s+n) \to 0$; and does so geometrically like $\mathcal{O}(e^{-\lambda n})$ on compact subsets. Upon, nous voyons,

$$
||\alpha(F(s+n)) - \alpha(\beta(s+n))||_{\mathcal{N}} \le C|e^{-\lambda n}|\\
$$

For any compact neighborhood $\mathcal{N} \subset \mathcal{Q}^*$. The function $\alpha(s)$ grows at worst like $\mathcal{O}(1/(s-\omega))$ as $s \to \omega$ (we reference Milnor for this tidbit \cite{milnor_2000}; but it isn't a difficult result to intuit). Any Abel function grows at worst linearly; this is comparable to the statement $\log(z) = \mathcal{O}(z)$.
\end{proof}

Because of this, we've shown an alternative representation for the $\beta$ tetrations. We summarize in the next theorem.

\begin{theorem}
Let $F_\lambda(s)$ be a tetration induced from $\beta$, for $\Re \lambda > -\log |\mu \omega|$ and $b = e^\mu \in \mathfrak{S}^o$ (the interior of the Shell-Thron region). Let $\text{tet}_b(s)$ be the regular tetration base $b$, and let $\alpha$ be its Abel function. Then, defining:

$$
\theta(s) = \lim_{n\to\infty} \alpha (\beta(s+n)) - s - n\\
$$

We must have:

$$
F_\lambda(s) = \text{tet}_b(s + \theta(s))\\
$$
\end{theorem}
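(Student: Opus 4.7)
The plan is to invoke Theorem \ref{thmTHTBETA} and then invert the Abel function. The identity we want is essentially a rearrangement of the one established in the proof of Theorem \ref{thmTHTBETA}: namely, starting from $\theta(s) = \alpha(F_\lambda(s)) - s$, which is shown to agree with the limit representation of $\theta$ on $\mathcal{Q}^*$, we have
$$\alpha(F_\lambda(s)) = s + \theta(s).$$
If $\text{tet}_b$ is a one-sided inverse of $\alpha$ on the relevant domain, applying it to both sides yields $F_\lambda(s) = \text{tet}_b(s+\theta(s))$. So the whole problem reduces to a careful but routine domain/invertibility argument.

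First I would spell out the inverse relationship between $\alpha$ and $\text{tet}_b$. By Theorem \ref{thmKOE} the Koenigs coordinate $\phi$ is a biholomorphism from the immediate basin $\mathcal{A}$ of $\omega$ onto a neighborhood of $0$, so $\alpha(z) = \ln\phi(z)/\ln(\mu\omega)$ is a biholomorphism from $\mathcal{A}$ (with a single logarithmic branch cut removed) to a right half-plane-like region $H$, and the regular tetration $\text{tet}_b(w) = \phi^{-1}(e^{\ln(\mu\omega)(w-s_0)})$ is precisely $\alpha^{-1}: H \to \mathcal{A}$. In particular, on $H$ one has $\text{tet}_b(\alpha(z)) = z$ and $\alpha(\text{tet}_b(w)) = w$, and both satisfy the Abel/exponential functional equations.

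Next I would fix $s \in \mathcal{Q}^*$ and iterate far to the right. By Theorem \ref{thmINVABLSHTH} the orbits $\beta(s+n)$ converge normally to $\omega$ on compact subsets of $\mathcal{P}$, and $\tau(s+n) \to 0$ geometrically, so for $n$ sufficiently large $F_\lambda(s+n)$ lies in any prescribed small disk around $\omega$ — in particular inside the domain of $\alpha$. In that regime Theorem \ref{thmTHTBETA} gives
$$\alpha(F_\lambda(s+n)) = s + n + \theta(s),$$
and since $F_\lambda(s+n) \in \mathcal{A}$ we may apply $\text{tet}_b$ to obtain $F_\lambda(s+n) = \text{tet}_b(s+n+\theta(s))$ outright. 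Pulling back one step at a time via the identities $F_\lambda(w-1) = \log F_\lambda(w)/\mu$ and $\text{tet}_b(w-1) = \log \text{tet}_b(w)/\mu$ (on matching branches), the equality propagates from $s+n$ back to $s$, up to a measure-zero set of logarithmic branch cuts that are already excised from $\mathcal{Q}^*$ and which correspond on the other side to iterated preimages of the branch cut of $\alpha$.

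The main obstacle is really just this branch-tracking. The analytic content — normality of the orbits of $\beta$, geometric decay of $\tau$, and the equivalence of the two representations of $\theta$ — is already in Theorems \ref{thmINVABLSHTH} and \ref{thmTHTBETA}. What requires care is verifying that the countably many logarithmic cuts generated by pulling back $F_\lambda$ and by pulling back $\text{tet}_b$ match under the translation $w = s + \theta(s)$; this uses that $\theta$ is holomorphic on $\mathcal{Q}^*$ (so it cannot introduce new branch data) together with the fact that both sides of the target identity satisfy $\exp F(s) = F(s+1)$. On each simply connected component of $\mathcal{Q}^*$ the monodromy is trivial, so analytic continuation along $s\mapsto s-1$ gives the identity globally on $\mathcal{Q}^*$, whence on the full orbit of $\mathcal{Q}^*$ by $1$- and $(2\pi i/\lambda)$-periodicity.
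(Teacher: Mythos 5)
Your proposal is correct and follows the same route as the paper: the paper presents this theorem as an immediate corollary of Theorem \ref{thmTHTBETA} with no separate proof, since it amounts to rearranging $\theta(s)=\alpha(F_\lambda(s))-s$ and applying $\alpha^{-1}=\text{tet}_b$. You have spelled out the inversion step (passing to large $n$ where $F_\lambda(s+n)$ is close enough to $\omega$ for $\text{tet}_b\circ\alpha=\mathrm{id}$ to hold, then pulling back) and the branch-matching the paper leaves implicit, which is a reasonable filling of the gap.
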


This means we can fully construct $F_\lambda$ with pretty much no mention of $\tau$ or $\rho$; we just have to construct $\theta$ mappings. This begs the question, how unique is our theta mapping? The answer is surprisingly simple; they are equivalent up to the addition of an elliptic function.

Take two $\theta$ and $\widetilde{\theta}$ such that:

\begin{align*}
    \theta(s+1) = \theta(s)\,\,\,&\text{and}\,\,\,\widetilde{\theta}(s+1) = \widetilde{\theta}(s)\\
    \theta(s+ 2\pi i/\lambda) = \theta(s) - 2\pi i/\lambda\,\,\,&\text{and}\,\,\,\widetilde{\theta}(s+ 2\pi i/\lambda) = \widetilde{\theta}(s) - 2\pi i/\lambda\\
\end{align*}

Therefore, if we take the difference of these two functions:

$$
p(s) = \theta(s) - \widetilde{\theta}(s)\\
$$

We must have that:

\begin{align*}
    p(s+1) &= p(s)\\
    p(s+2\pi i/\lambda) &= p(s)\\
\end{align*}

There are two cases here, where $p$ is what we typically call an elliptic function; and the case that $p$ is only holomorphic on the parallelogram $\mathcal{Q}$ with some deleted domains. Both cases are equally interesting. We will leave this discussion for the next section; but we've reduced the uniqueness of our $\theta$ mapping into a question of elliptic functions.

\section{Arguing for the uniqueness of the $\beta$ method}

We won't be using any deep details of elliptic functions excepting some basic properties. We will point to the introductory text by Ahlfors which handles everything we will use in this section \cite{ahlfors_2021}. The two cases we have to handle in this section are when $p$ is your run of the mill elliptic function; and when $p$ is not your run of the mill elliptic function; meaning it has domains deleted from its parallelogram of holomorphy (It's holomorphic on $\mathcal{Q}^* \subset \mathcal{Q}$ as opposed to being meromorphic on $\mathcal{Q}$).

The first instance, when $p$ is an elliptic function in the traditional sense, is very easy to handle. To explain this, we will stay with the base case of $F_\lambda$ induced by the beta method; and ask what happens if we perturb the $\theta$ mapping by an elliptic function. This means that:

\begin{align*}
    \widetilde{\theta}(s) &= \theta(s) + p(s)\\
    \widetilde{F}_\lambda(s) &= \text{tet}_b(s+\widetilde{\theta}(s))\\
    \widetilde{F}_\lambda(s) &= \text{tet}_b(s+\theta(s) + p(s))\\
\end{align*}

Now, without loss of generality, we can assume that $p(s)$ has all its singularities contained within the parallelogram $\mathcal{Q}$ (if not, just shift the fundamental parallelogram). This means that $\widetilde{F}$ will have ``more'' singularities than $F$. But it's difficult to quantize this intuition. 

We want to be able to determine whether a $\theta$ mapping has a elliptic function attached to it, or not; while just looking at $\theta$. The idea is fairly simple; since $p$ is meromorphic, if it has a singularity at $s^*$, then $\widetilde{F}(s^*)$ has an essential singularity. Since it has an essential singularity, it attains every point in the complex plane near it; but then we must have a whole lot of branching; as $\text{tet}_b(s)$ has a big branch cut along $(-\infty,-2]$. So, by direct comparison $\widetilde{F}$ is holomorphic on a smaller domain than $F$! If $\widetilde{F}$ has an essential singularity, it must have a non-constant elliptic function attached to its $\theta$ mapping.

So, if we have a tetration function $F$ induced by $\beta$; any tetration created by perturbing the $\theta$ mapping by an elliptic mapping $p$ must necessarily be holomorphic on a smaller domain than $F$. And, this tetration function must have an essential singularity.\\

Now, if $p$ is not a traditional elliptic mapping, things are a tad more complicated, as we need not have singularities; or $p$ itself may have essential singularities. If it is defined on the whole parallelogram but with essential singularities (it isn't meromorphic), the above argument still applies. Thus, it suffices to consider $p$ defined on the parallelogram with deleted points/domains. 

We are going to assume a result here and save the proof for the next section. We will assume something using Lebesgue's theory of integration. Let $\mathcal{Q}$ be the parallelogram, and let $\mathcal{Q}^*$ be the parallelogram that the beta methods $\theta$ mapping is holomorphic on. Then,

$$
\int_{\mathcal{Q}^*/\mathcal{Q}} \,dA = 0\\
$$

This is equivalent to the statement that:

$$
\int_{\mathcal{B}}\,dA = 0\\
$$

Which, is the statement that the weak Julia set is measure zero under an area mapping. This is where it is absolutely pivotal that we assume that $b = e^\mu \in \mathfrak{S}^o$. This is a result which only holds for the Shell-Thron region. Therefore, we can begin by only considering $\widetilde{\theta}$ holomorphic almost everywhere on $\mathcal{Q}$ (up to branch cuts). So what we are qualifying with adding an elliptic mapping, is moving the branch cuts around.

This is where it becomes very difficult to determine why the beta method is any better than any other tetration method which is equally holomorphic on $\mathcal{Q}$ almost everywhere. We have very little data in ways of identifying the beta method from any other method which produces a periodic tetration of the same period. Thus, enters the key.

The function $\theta$ induced by the beta method, has its singularities exactly at the four corners of $\mathcal{Q}$. If we can qualify the behaviour of these singularities, we have the uniqueness of the beta method in terms of $\theta$ mappings.

The last thing we have to do, before completing this proof; is proving that any tetration function $F$ with a different period than the regular iteration, cannot be holomorphic in the right half-plane. I.e: there must be branch-cuts or singularities. This result will take us away from everything we've done in this paper, and is actually very simple to show with the right tools. We will also put this in coming sections; as it's a tad technical.

To summarize the goal of the next section:\\

If $\widetilde{\theta}(s)$ is $\theta$ upto a traditionally non-constant elliptic function we will have singularities outside of the singularities at the four corners of the parallelogram. If $\widetilde{\theta}(s)$ has branch cuts on the parallelogram and singularities outside of the singularities at the four corners, we will have more singularities than $\theta$. If the only singularities of $\widetilde{\theta}$ are at the four corners of $\mathcal{Q}$; and the singularities are ``no worse'' than the singularities of $\theta$, then $\widetilde{\theta} = \theta$.

\section{Comparing the $\beta$ method to a $\widetilde{\beta}$ method}

We are going to start by putting a wall of equations and relations; and how our functions interact:

\begin{align*}
    F(s) &= \text{tet}_b(s+\theta(s))\\
    \widetilde{F}(s) &= \text{tet}_b(s+\widetilde{\theta}(s))\\
    &\\
    \theta(s) &= \widetilde{\theta}(s) + p(s)\\
    p\,\,&\text{is doubly periodic (almost elliptic)}\\
    p(0)&=0\\
    &\\
    \theta(s) &= \lim_{n\to\infty} \alpha(\beta(s+n)) - s - n\\
    \widetilde{\theta}(s) &= \lim_{n\to\infty} \alpha(\beta(s+n)) - s - n - p(s)\\
    &= \lim_{n\to\infty} \alpha(\widetilde{\beta}(s+n)) - s - n\\
    &\\
    F(s) &= \beta(s) + \tau(s)\\
    \widetilde{F}(s) &= \widetilde{\beta}(s) + \widetilde{\tau}(s)\\
    &\\
    \beta(s) &=\beta(s+2\pi i/\lambda)\\
    \widetilde{\beta}(s)  &= \widetilde{\beta}(s+2\pi i/\lambda)\\
\end{align*}

This gives us access to a $\widetilde{\beta}$ function from the function $\widetilde{\theta}$. If we can create a uniqueness criterion for $\beta$; it will transfer over as a uniqueness condition for $\theta$. The thing we're looking for is; $\beta$ is holomorphic for $\Re(\lambda s) < 1$, and has simple poles at $s=1+(2k+1)\pi i/\lambda$ with residue $\frac{1}{\lambda}e^{\mu\beta(\pi i /\lambda)}$. This defines a unique function by mapping $\Re(\lambda s) < 1.5$ to the unit disk, and treating the zeroes of $\frac{1}{\beta(s)}$; then, any function with the same poles $\widetilde{\beta}(s) = \beta(s)e^{g(s)}$. But then, the residues are not the same; they don't equal $e^{\mu\widetilde{\beta}(\pi i / \lambda)}/\lambda$. This uniquely characterizes the $\beta$ function.

\section{Tying up the loose ends}

There are two results we need to solidify before we can state that the $\theta$ mapping induced by $\beta$ is unique. The first result is that every periodic solution to the tetration equation which is not the period of the regular iteration must have singularities in the right half plane other than the singularities of the regular iteration. Let us assume everything is normalized, and then:

$$
\theta(s) = \alpha(F(s)) - s\\
$$

Where we assume that $F$ has no singularities for large $\Re(s)$. Where additionally, as we've assumed normalization, $\theta(0) = \theta(1) = 0$. The function $\theta(s) = \mathcal{O}(s)$; and therefore applying Ramanujan's master theorem $\theta(s) = 0$.  This implies that $F$ is identically the regular iteration. For our purposes, we need only reference Ramanujan's theorem as, if $f(s) = \mathcal{O}(s)$ and is holomorphic in the right half plane; if $f\big{|}_{\mathbb{N}} = 0$, then $f = 0$. Typically this is done with functions of exponential type, but that's a slight overkill for us. We reference an introductory journal article for this \cite{amdeberhan_2012}.

The second thing we need to clear up is that the weak Julia set $\mathcal{B}$ when $e^\mu = b \in \mathfrak{S}^o$ is measure zero under an area mapping. For this, we detract from an actual proof and reference Devaney \cite{devaney_2021}; and his analysis of the exponential function for $0 < \mu < 1/e$; which extends exactly for everywhere in the Shell-Thron region. This is never exactly stated in Devaney; but the Julia set of $e^{\mu z}$ for the Shell-Thron region is all the preimages of the Devaney hairs which tend to infinity.

For example, in the case of $b = \sqrt{2}$; if we take all the preimages of $(4,\infty)$, we have the Julia set. Since this is a measure zero set; so are a countable union of preimages which are measure zero. So the Julia set is measure zero. Since the weak Julia set is a remapping of the Julia set where $\beta \in \mathcal{J}$; the actual weak Julia set is measure zero under an area mapping.

\section{Collecting Results}

From here, we know that the $\theta$ mapping is unique. Collecting how and why it's unique, is difficult. But, if the reader has been paying attention this chapter, it's pretty straight forward. The $\beta$ function induces a $\theta$ mapping. If the $\theta$ mapping is moved, it induces a different $\beta$ function. The $\beta$ function is unique if we ask that:

\begin{align*}
    \beta &\,\,\text{is holomorphic for}\,\, \Re(\lambda s) < 1\\
    \beta&\,\,\text{has a period}\,\,2\pi i/\lambda\\
    \beta&\,\,\text{has simple poles at}\,\, 1 + (2k+1)\pi i/\lambda\,\,\text{for}\,\,k\in\mathbb{Z}\\
    \beta(s) &= \frac{e^{\mu\beta(\pi i/\lambda)}/\lambda}{s-1-(2k+1)\pi i/\lambda}+h(s)\,\,\,\,h\,\text{locally holomorphic}\\
    \log \beta(s+1) - \beta(s) &= \mathcal{O}(e^{-\lambda s})\\ 
\end{align*}

And the $\theta$ function is unique if we ask that its singularities are at the corners of the parallelogram $\mathcal{Q}$; and that the $\beta$ function induced from $\theta$ is as above. This settles the Shell-Thron region (or at least its interior). The very important complex analytic fact, which is implicit in this solution:

$$
\beta(s) = \sum_{k\in\mathbb{Z}} \left(\frac{e^{\mu\beta(\pi i/\lambda)}/\lambda}{s-1-(2k+1)\pi i/\lambda} - p_k(s)\right) + H(s)\\
$$

For a holomorphic function $H$ for $\Re(s)<2$; and polynomials $p_k = \mathcal{O}(1/k)$. This is largely what insures $\beta$ is unique. We can reduce this; by doing some manipulations. Firstly $s\mapsto \pi i s/\lambda +1$:

$$
\beta(\pi i s/\lambda +1) = \frac{e^{\mu\beta(\pi i/\lambda)}}{\pi i}\sum_{k\in\mathbb{Z}} \left(\frac{1}{s-2k-1}-b_k(s)\right) + B(s)\\
$$

Now, we can map $s\mapsto s+1$ and:

$$
\beta(\pi i (s+1)/\lambda +1) = \frac{e^{\mu\beta(\pi i/\lambda)}}{\pi i}\sum_{k\in\mathbb{Z}} \left(\frac{2}{\frac{s}{2}-k}+\frac{2}{k}\right) + A(s)\\
$$

Using the expansion for $\cot(s)$:

$$
\frac{1}{\pi}\cot(\pi s) = \frac{1}{s} + \sum_{k\in \mathbb{Z}/\{0\}}\frac{1}{s-k}+\frac{1}{k}\\
$$

This reduces exactly to:

$$
\beta(\pi i (s+1)/\lambda +1) = -2ie^{\mu\beta(\pi i/\lambda)} \cot(\pi s/2) + A(s)
$$

Where, notably $A(s+2) = A(s)$. The uniqueness condition we are imploring is that $\beta$ satisfies this equation; and it is the only function to do so if we require $\log \beta(s+1) - \beta(s)$ tends to zero geometrically; which is required for any $\beta$ function to induce a $\theta$ mapping. It's important to recognize that this is a recursive equation. We are requiring the function to satisfy a residue equation with a different value of the function. This is the real heavy lifting of our theorem. We are checking whether $\beta$ induced by $\theta$ has this expansion. If not, it's not the correct $\theta$.

Producing this residue is rather simple too; for any contour $\mathcal{C}$ about the singularity $1+(2k+1)\pi i / \lambda$:

$$
\int_{\mathcal{C}} \beta(s)\,ds = \int_{\mathcal{C}}\frac{e^{\mu\beta(s-1)}}{1+e^{-\lambda(s-1)}}\,ds = \frac{2\pi i}{\lambda} e^{\mu \beta(\pi i /\lambda)}\\
$$

Due to the simple pole of the denominator with residue $1/\lambda$; and the holomorphy of the numerator. If this isn't self-explaining; I don't know what to say. The result then, is this uniquely defines $\beta$.\\

As an author, I have not chosen to solidify these results into a theorem. The result of these discussions are intended as a blue print; and if they are not produced perfectly; they still display the general shape of the $\theta$ mappings of $F_\lambda$. This ends our discussion of $\theta$ mappings for the $\beta$ method on the Shell-Thron region.

\section{Closing with the $b>e^{1/e}$ case}

We are beginning to close this report with a very different subject. This subject is much deeper at home to Levenstein's work on tetration--and the $\theta$ mapping philosophy of the tetration forum \cite{tetration_forum}. Much of this work was epitomized in Levenstein's Fatou.gp program. The mathematical work hasn't been written as of yet; but it is in the process of being written by him and I. There exists an exact dichotomy between this case, and the cases handled for the most of this paper. It's precisely the question, how can we make a holomorphic inverse Abel function on the Julia set?

As we've described over and over again in this paper; the $\beta$ method is suited exactly for Fatou sets; and is at best an asymptotic expansion for Julia sets. The trouble is, for $b > e^{1/e}$; the Julia set is the entire complex plane. This can be found in Devaney \cite{devaney_2021}; but is a well known result; which is largely why it makes sense to just refer to base $b = e$; and call it a day. Although the exponentials $b^z$ are not conjugate similar to $e^z$; when it comes to iteration, the mantra is everything you can do with $e^z$--you can do with $b^z$ when $b>e^{1/e}$.

With that being said, the central philosophy of constructing a tetration from $e^z$ is to use a $1$-periodic $\theta$ mapping. As Levenstein likes to say, the $\theta$ mappings I just talked about are very different from these $\theta$ mappings. These $\theta$ mappings need an application of Schwarz's reflection principle to ever have a hope of being a full tetration.

Whereas; when $b = \sqrt{2}$, for example:

\begin{align*}
\theta(s) &= \sum_{k=-\infty}^\infty c_k e^{-2\pi i k s}\\
\overline{c_k} &= c_{-k}\\
\theta &: \mathbb{R} \to \mathbb{R}\\
\end{align*}

And then, using regular iteration $F(s) = \text{tet}_{\sqrt{2}}(s+\theta(s))$. A very different result happens for base $b=e$. Since there is no fixed point in the orbit of $\exp$ about $0$; we have to choose a different iteration base. This is done primarily by focusing on $L\approx 0.31813 + 1.33723i$--which is the fixed point of $\exp$ with minimal imaginary part. This fixed point is repelling, and consequently there does exist an iteration $\Psi(e^{Ls})$ which satisfies the tetration equation. En plus, $\Psi(0) = L$ and $\Psi'(0) = 1$.

The trouble is, this will never be real valued. The only way it can be real valued is if you can find a real valued path and map it to the real line. This can be done for the upper half plane; with a boundary of the real line. Additionally it will be unique. This produces a very different kind of $\theta$ mapping:

\begin{align*}
    \theta(s) &= \sum_{k=0}^\infty d_k e^{2\pi i k s}\\
    \theta\,\,&\text{is holomorphic for}\,\,\Im(s) > 0\\
    \lim_{\Im(s) \to \infty}\theta(s) &= d_0\\
    \lim_{\Im(s) \to 0}\Psi(e^{L(s+\theta(s))}) &\in \mathbb{R}\,\,\text{when}\,\,\Re(s) > -2\\
\end{align*}

Then, to extend to the lower half plane, we can use Schwarz's reflection principle. Which is equivalent to doing the exact same thing I've just done here, but with the fixed point $\overline{L} \approx 0.31813 - 1.33723i$. Then, thank the heavens, we are still analytic on $\mathbb{R}$; and we are pasting two analytic functions together.

It is well conjectured this is the only real valued tetration we get from $e^z$. Thereby, because these $\theta$ mappings are inherently different from the elliptic-\textit{like} $\theta$ mappings, there is no way to ``correct" the $\beta$ function (which is periodic) into Kneser's method. This underscores the deepness of The Asymptotic Theorem of the third kind \ref{thmASYM3}. We are trying to force a periodic solution to this iteration; which makes an elliptic-\textit{like} $\theta$ mapping; and this is impossible.

This leads to the only way that the $\beta$ method can become holomorphic for $b>e^{1/e}$. We must remove the elliptic aspect; there can be no period. The way the author refers to this, is as a double limit. If, as we iterate $\tau^n$, we additionally let $\lambda \to 0$; then $2 \pi i/\lambda \to \infty$; and the period disappears. Upon which, the only hope in hell we have of making the $\beta$ method work for $b>e^{1/e}$ is to work precisely with the limit:

$$
\lim_{n\to\infty,\,\lambda \to 0} \log^{\circ n} \beta(s+n)\\
$$

It is beyond the scope of this paper, but the author has much of the jigsaw discovered. The hidden trick is to ensure that $\lambda = \mathcal{O}(n^{-\delta})$ for $0 < \delta < 1$. Which should directly produce Kneser's tetration. The difficulty in even approaching this, is that any program calculating $\beta$ for $\Re \lambda < 0.001$ is doomed to fail. Again, to even attempt a numerical solution would require a supercomputer. But the author is confident in this result; despite the inability to empirically justify it.

This is largely why the author has avoided talking about the case $b>e^{1/e}$ in a holomorphic respect. The code is expected to fail; but the result is still prominent. With that statement, we nod towards what is so incredible about Levenstein's Fatou.gp \cite{levenstein_2011}; it uses the bare amount of data points to produce Kneser's tetration. The author feels in no manner that the $\beta$ method is feasible (at least computationally) for these cases. Regardless, the $\beta$ method is still a pretty good approximation. It can be made to work; but it is not as natural, or in anyway as convenient as Levenstein's approach. Why use the language of all these $\beta$'s when we can just find $\theta$ and leave it at that.

\chapter*{Appendix}

\section*{A}

This portion of the appendix is intended to give a rough explanation of the program \texttt{beta.gp} included with this report. All the phase-plot graphs in this paper were produced using this program (excepting one graph using \texttt{fatou.gp}--Figure \ref{fig:approx_reg_iter_root_two}). The program is based off of a recursive philosophy; where much of the entire program is written using recursion. It is, as one would say, a formal program that runs perfectly; but is limited by the confines of the computer which runs it. Where its greatest enemy is memory overflow errors. This is something that is unavoidable, as tetration is ingrained in the study of large numbers (numbers too large for a desktop computer's memory).

This program is programmed using the language PARI; and executed through the shell PARI-GP; or the source can be compiled into a C program, which runs noticeably faster. The author chose PARI for its handling of large numbers, that far surpasses any competitor. This is the language of the tetration forum for the most part, because it allows for values of the order \texttt{exp(1E6)}, if you have the memory capacity. It's an odd fit; as PARI is more at home amongst p-adic analysis; but the ideas go hand in hand. PARI can hold an obscene amount of Taylor series/polynomial data that is perfect for tetration--let alone the size and detail of a number it can hold. 

Built into the program \texttt{beta.gp} is a rudimentary graphing protocol \texttt{MakeGraph}; which accounts for all the phase-plot graphs we have included in this paper. This function protocol was written by Mike3 of the tetration forum; and is the only piece of code not written by Levenstein, or myself, in this program. Most of the code in \texttt{beta.gp} was written by myself; but Levenstein offered a couple of optimization techniques which are invaluable. 

This program allows us to analyse: the $\beta$ function, the inverse Abel from $\beta$, the $\tau$ function, the $\rho$ functions, and a bunch of weak Fatou/weak Julia set protocols. Half heartedly coded in are some normalization protocols, but they aren't needed. Additionally we have added alternative initializations which allow us to analyse the $\mu = 1E100$ or higher cases. The analysis of these cases are out of the scope of this paper; but their construction still exists. The code of \texttt{beta.gp} requires different initializations for such extremes, though (and be prepared to eat memory like the cookie monster).

One is able to grab Taylor series from any function involved; so calculating \texttt{beta(1+z)}, or \texttt{tau(1+z)}, will produce a polynomial (to the desired series precision) of $\beta$, or $\tau$, about $1$. Equally, we are constrained by the digit precision declared at the beginning of the program. This can be used to grab the asymptotic series as well; and works identically in either case.

This program is not without its flaws, but it is quite literally an exact translation of the math within this paper. Where things like $\OmSum$ are translated directly into a recursive protocol; and $\tau^n$ similarly. The major difference between \texttt{beta.gp} and a formal algorithm of the math with in this paper; there is a training wheels protocol, which initiates a breaking sequence. The training wheels protocol is done solely if we are about to hit the capacity of PARI; otherwise this is just a formal program where 90\% of the time it will fail because $\beta$ can grow too large sporadically and we'll have a memory error. So there is a predictive if statement which catches before we'll even get close to having an error; this is the only unnatural part of our code. By unnatural, I mean it's not a perfect recursive program. Nonetheless, it is modeled exactly after a perfect recursive algorithm.

The catch statement is precisely if \texttt{x>exp(1E6)}, quit. This is all we've added as an artificial construction to save face within \texttt{PARI}. 

I have included a large amount of documentation with the program \texttt{beta.gp}. It explains more clearly the exact function calls, how to initialize values, how to understand what each variable/function means. I have included a \texttt{readme.txt}; which runs through the purpose of each function. I have also included a bloated amount of commentary within \texttt{beta.gp} itself.

The program \texttt{beta.gp} is included alongside this paper to demonstrate how everything we've talked about is reproducible. This is the scientific end of this paper; we want everything to be clear, but reproducible. For more detailed use of this program (creating more dynamic types of graph); it is possible to attach this program to \texttt{SageMath}--which has a plethora of graphing protocols not used in this paper, but helpful for visualizations.

\section*{B}

This portion of the appendix is a break down of Sheldon Levenstein's analysis of the case $\mu =1$ and $\lambda =1$. This case is particularly interesting, as the beta function has a base $b= e$ and a period $2 \pi i$. This means if $\tau$ were holomorphic, we'd construct a $2\pi i$ periodic real valued tetration base $e$. As we know, this doesn't happen; $\tau$ is not holomorphic. The author had originally thought he'd proven that $\tau$ was holomorphic. Levenstein, however, provided a large amount of counter evidence, which eventually led towards The Asymptotic Theorem of the third kind \ref{thmASYM3}. Where we miss holomorphy by a hair's breadth. Levenstein employed some genius observations.

These comments begin as such:

\begin{displayquote}
The $\beta$ function appears very well behaved in the complex plane, and matches to precision a Taylor series generated by sampling a unit circle, and also matches the function's iterative definition.   The results are accurate to 112 decimal digits with 240 sample points.

The $\tau$ function does not match its Taylor series generated by sampling around a unit circle with 32 sample points, except at those 32 sample points.  
\end{displayquote}

What Levenstein is saying here, in so many words, Cauchy's Integral Formula is failing. Sampling along the boundary of a disk, is causing, using the generated taylor series of $\tau$ at 32 points, a divergence when we try to use these points to evaluate on the interior of said disk. As we increase sample points; we yet again see further and further divergence; where, if $\tau$ were holomorphic we'd see some semblance of convergence. 

This is essentially the basis of a Cauchy type argument. As we express $\tau$ on the border of the disk (using as many sample points, which are really sample polynomials), they don't reflect the behaviour of $\tau$ within the disk in a holomorphic manner. Where Cauchy's integral formula would guarantee this process to work. As so well it does work with $\beta$ itself.

Levenstein then goes on to single out the points $1+e^{2.3826i}$ and $1+e^{2.3827i}$; where along the arc connecting them $\tau$ has a singularity. He posts a graph describing this phenomena in Figure \ref{fig:Shel_sing}. This throws a wrench in the gears, because further iterations only go to sharpen these spikes, and additionally create more distortions in the nice looking waveform we have. This is the germ of the idea that $\tau$ converges point wise; is absolutely great as an asymptotic (like $\tau^9$ is in this graph); but as we iterate further, we experience more jagged discontinuous behaviour. The resolution of this problem, is the eventual Asymptotic Theorem of the third kind.

\begin{figure}
    \centering
    \includegraphics[scale=0.25]{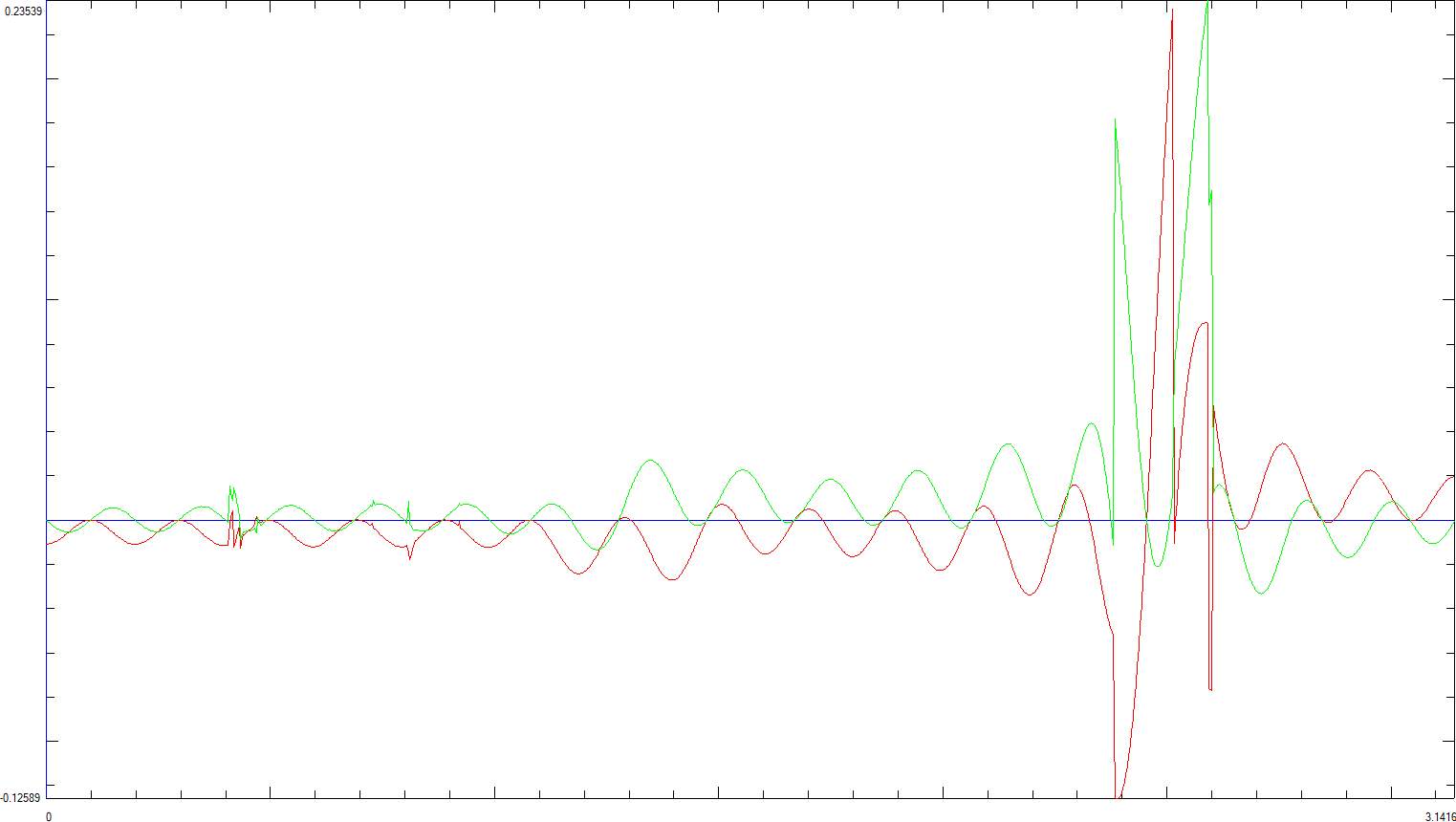}
    \caption{Levenstein's graph displaying the singularity which occurs for $\tau^{n}$ for $n\approx 9$. }
    \label{fig:Shel_sing}
\end{figure}

Levenstein then points his attention to the real line. This begins by introducing the $\rho$ terms; which we dubbed Levenstein's form of the recursion:

\begin{align*}
\tau^n(s) &= \sum_{j=1}^n \rho^j(s)\\
\rho^{n+1}(s) &= \ln\left(1+\frac{\rho^n(s+1)}{\beta(s+1) + \sum_{j=1}^{n-1}\rho^j(s+1)}\right)\\
\end{align*}

The real line becomes incredibly difficult because $\beta(6)$ is about where Pari-gp overflows $\beta(6) \approx 3.49302404\times 10^{48220669901186881}$. This means we are limited when talking about $\rho$. For $0 \le \Re(s) \le 1$ we can only reach $\rho^5$. So it is very difficult to draw a conclusion on an infinite amount of $\rho$ terms from a sample size of $5$. 

The value $\rho^0 = 0$, and we start our chain of equations with $\rho^1 = -\ln(1+e^{-s})$. Levenstein observed, that as we add $\rho^2$, $\rho^3$, $\rho^4$, and $\rho^5$--we converge ridiculously fast pointwise. But something very anomalous happens with the Taylor coefficients. The odd thing being, they happen very very far out. To notice this we can look at $\beta(s) + \rho^1(s)$ and see that it hits zero; and that a clustering of zeroes start to happen near zero. This is displayed in Figure \ref{fig:Shel_sing2}

\begin{figure}
    \centering
    \includegraphics[scale=0.45]{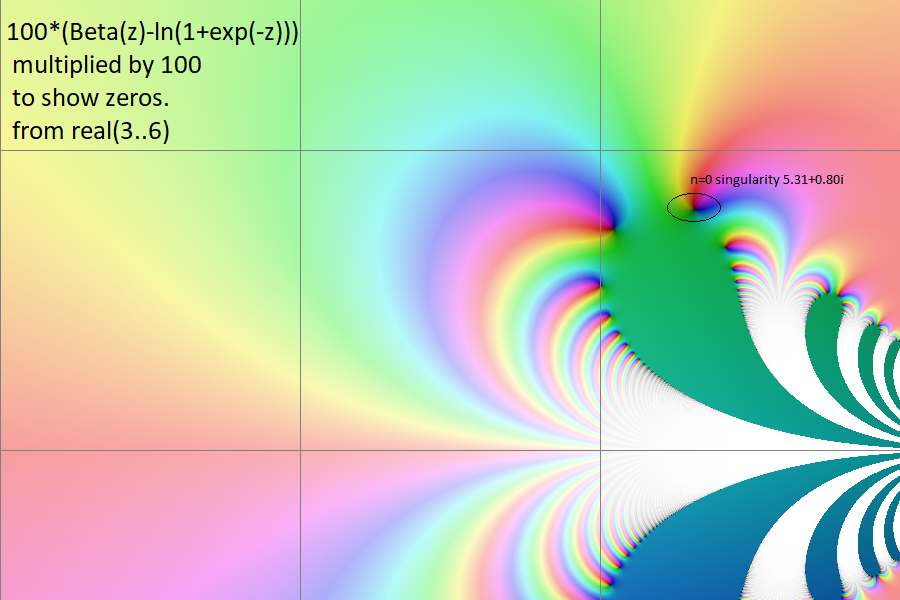}
    \caption{A zoom in of the zeroes which start to appear.}
    \label{fig:Shel_sing2}
\end{figure}

This hints directly that our Taylor series on $\mathbb{R}$ is going to be tricky to prove convergence. And our gut should say that it isn't going to converge at this point. As soon as we hit $\rho^3$ we are going to feel the effect of these zeroes. In Figure \ref{fig:Shel_sing2}, on the border of the white and pale green petals are a countable number of singularities which cluster towards $\mathbb{R}$.

Levenstein, then refers to the Taylor expansions in a neighborhood of the interval $[-1,1]$. Specifically trying to test; using polynomials:

\begin{align*}
    \widetilde{\tau}^5(x) &= \sum_{k=0}^{1000} \tau^{5,(k)}(0) \frac{x^k}{k!}\,\,\text{for}\,\, -1 \le x \le 1\\
    \beta(x) + \widetilde{\tau}^5(x) &\approx \exp(\beta(x-1)+\widetilde{\tau}^5(x-1))\,\,\text{for}\,\,0 \le x \le 1\\
    \beta(x+1) + \widetilde{\tau}^5(x+1) &\approx \exp(\beta(x)+\widetilde{\tau}^5(x))\,\,\text{for}\,\,-1 \le x \le 0\\
\end{align*}

Where, if the Taylor series converges, it must be that these objects are virtually identical with enough terms. It doesn't converge, though. So, if we take the Taylor polynomial of $\tau^5$ about $0$, call it $\widetilde{\tau}^5$, and test $\tau^5(0.5)$ against $\widetilde{\tau}^5(0.5)$; the polynomials begin to disagree rapidly and rabidly as we take further and further terms. Which is to say $\exp(\beta(-0.5) + \widetilde{\tau}(-0.5)) \neq \beta(0.5) +\widetilde{\tau}(0.5)$. Despite the fact, $\exp(\beta(-0.5) + \tau(-0.5)) = \beta(0.5) +\tau(0.5)$. But furthermore, we can map how it gets worse with each iteration.  

\begin{figure}
    \centering
    \includegraphics[scale=0.5]{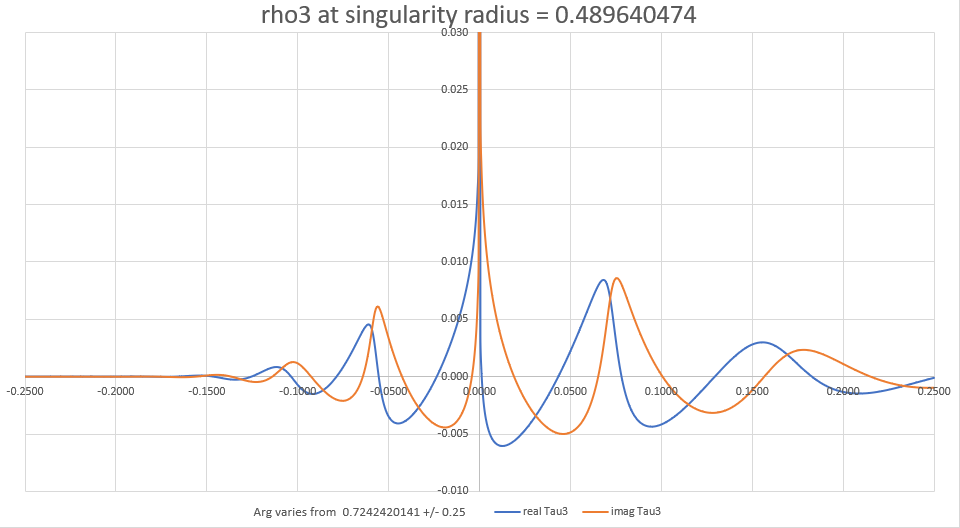}
    \caption{A graph of $\tau^3$, drawn across an arc around a circle of radius $0.489$, about the point $x_0$ such that $\beta(x_0)= 1$. A clear display of singularity-like behaviour arises.}
    \label{fig:Shel_sing3}
\end{figure}

In Figure \ref{fig:Shel_sing3} we begin to see a chaos in the polynomials describing $\tau$ (albeit with $3$ iterations instead of $5$). This means quite clearly, if:

\begin{align*}
    \widetilde{\tau}^n_m(x) &= \sum_{k=0}^m \tau^{n,(k)}(0) \frac{x^k}{k!}\\
    \lim_{m,n\to\infty} \widetilde{\tau}^n_m(x) - \tau(x) &\neq 0\\
\end{align*}

The Asymptotic Theorem of the third kind, ensures that:

$$
\lim_{n\to\infty} \widetilde{\tau}^n_m(x) - \tau_m(x) \,\,\text{converges}\\
$$

It's as we explore further terms of the Taylor polynomials that we start to see more spikes. These spikes cause the series to diverge; despite all the terms converging pointwise. From this, Levenstein made an astute calculation as to where we start to see spikes in the derivatives of $\tau$; they are far off in the $50,000$ terms range. Levenstein came up with a very ingenious way of isolating only far out terms without calculating the lower terms.

In Figure \ref{fig:Shel_sing4}; we can see how very large terms of the Taylor series start to grow faster and faster. First with $\rho^3$, growing in an unbounded way; but $\rho_3$ itself is holomorphic, so this is okay. The function $\rho^4$ is just as holomorphic; but we can see the further out Taylor coefficients start growing faster than $\rho^3$--it's going to be holomorphic on a smaller domain. As we add $\rho^5$, we experience the same phenomena--the further out series coefficients start growing unreasonably. When we add up all the terms $\tau = \sum_{j=1}^\infty \rho^j$, we will see this phenomena causes the far out terms to be unreasonably large, and the power series will not converge.

\begin{figure}
    \centering
    \includegraphics[scale=0.5]{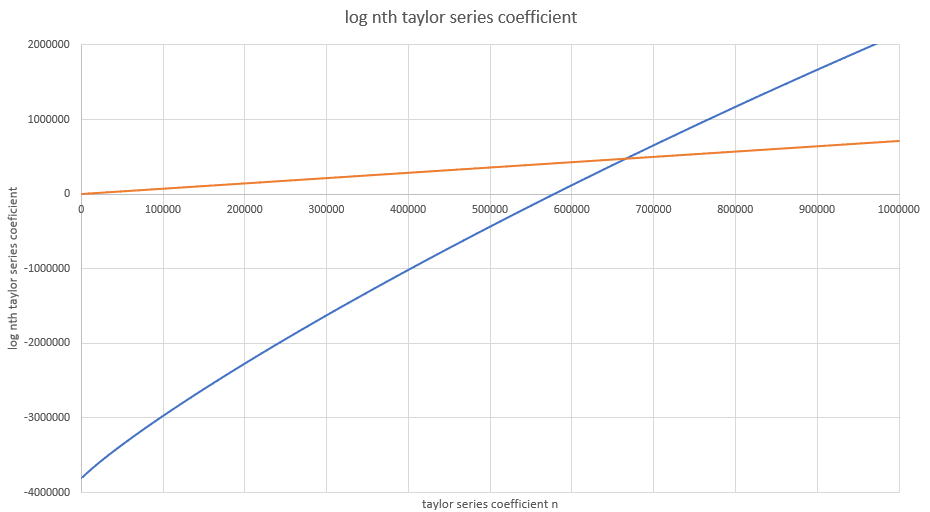}
    \caption{A discrete graph of $\ln \frac{d^k}{dx^k}\rho_3(0)$ (red) and $\ln \frac{d^k}{dx^k}\rho_4(0)$ (blue) from $0 \le k \le 1000000$}
    \label{fig:Shel_sing4}
\end{figure}

Levenstein then does the unthinkable, and produces an unreasonably correct approximation. Let $\ell\rho^1(s) = \ln(\ln(1+e^{-s})) = \ln\left( - \rho^1(s)\right)$; described by the recursion:

\begin{align*}
\ell\rho^n(s) &= \ln\left(-\ln(1-e^{\ell\rho^{n-1}(s+1)-F_{n-1}(s)})\right)\\
F_n(s) &= \beta(s) + \tau^n(s) = \beta(s) + \sum_{j=1}^n \rho^n(s)\\
\end{align*}

This causes us to get:

\begin{align*}
    \ell\rho^n(s) &\approx \ell\rho^{n-1}(s+1) - F_{n-1}(s)\\ 
    \ell\rho^n(s) &\approx \ln \ln \left(1+e^{-s-n}\right) - \sum_{j=1}^n F_{n-1}(s+n-j)\\
\end{align*}

This serves as an awesome approximation, and helps us pull out a faster formula for the further out series terms. From here, we can see that higher $\rho$ terms cause the higher terms of the series to blow up; and not only that--helps us estimate their eventual growth. All in all, this shows that $\tau$ cannot be analytic anywhere on $\mathbb{R}$. This is not shown, per se, as we've only analyzed $\rho^1,\rho^2,\rho^3,\rho^4,\rho^5$; but it provides a lot of solid evidence. 

As to this, I abstracted much of this to derive Levenstein's Theorem \ref{thmLVN}, and using this knowledge, abstracted it to The Generalized Levenstein Theorem \ref{thmGenLev}. We named these theorems for Sheldon Levenstein as they are truly his work. He just didn't know how to compile the results into a proof. Again, as he comes from an engineering background, he'd prefer to hold the calculator in his hand; and can make the calculator. But the proof may escape his train of thought. For that, we call these results as his--especially because he found a hole in my original idea that this can't happen. This entire results hinges on the fact that $||\frac{1}{\beta(s+n)}||_{\mathcal{N}} \not \to 0$ for any compact set $\mathcal{N}\subset \mathbb{C}$, despite the fact $\frac{1}{\beta(s+n)} \to 0$ for each element of $\mathcal{N}$. 

\bibliographystyle{plain}
\bibliography{main}

\end{document}